\newcommand{\bC}{\mathbb{C}}
\newcommand{\bD}{\mathbb{D}}
\newcommand{\bN}{\mathbb{N}}
\newcommand{\bP}{\mathbb{P}}
\newcommand{\bQ}{\mathbb{Q}}
\newcommand{\bR}{\mathbb{R}}
\newcommand{\bS}{\mathbb{S}}
\newcommand{\bZ}{\mathbb{Z}}
\newcommand{\gA}{\mathbf{A}}
\newcommand{\gV}{\mathbf{V}}
\newcommand{\gL}{\mathbf{L}}
\newcommand{\gP}{\mathbf{P}}
\newcommand\lra{\longrightarrow}
\newcommand\trf{\mathrm{trf}}
\newcommand\Diff{\mathrm{Diff}}
\newcommand\Ker{\operatorname*{Ker}}
\newcommand{\hcoker}{/\!\!/}
\renewcommand{\epsilon}{\varepsilon}
\newcommand{\Kdim}{\mathrm{Kdim}}
\newcommand{\HBmod}{H_B\bQ\text{-}\mathsf{mod}}
\newcommand{\HPBmod}{HP_B\bQ\text{-}\mathsf{mod}}
\mathchardef\ordinarycolon\mathcode`\:
\theoremstyle{plain}
\newtheorem{MainThm}{Theorem}
\newtheorem{MainCor}[MainThm]{Corollary}
\newtheorem{theorem}{Theorem}[section]
\newtheorem{proposition}[theorem]{Proposition}
\newtheorem{lemma}[theorem]{Lemma}
\newtheorem{corollary}[theorem]{Corollary}
\theoremstyle{definition}
\newtheorem{example}[theorem]{Example}
\newtheorem{construction}[theorem]{Construction}
\theoremstyle{remark}
\newtheorem{obs}[theorem]{Observation}
\newtheorem{remark}[theorem]{Remark}
\newtheorem*{remark*}{Remark}
\numberwithin{equation}{section}
\title[Tautological rings]{Some phenomena in\\tautological rings of manifolds}
\author{Oscar Randal-Williams}
\email{o.randal-williams@dpmms.cam.ac.uk}
\address{Centre for Mathematical Sciences\\
Wilberforce Road\\
Cambridge CB3 0WB\\
UK}
\begin{document}
\begin{abstract}
We prove several basic ring-theoretic results about tautological rings of manifolds $W$, that is, the rings of generalised Miller--Morita--Mumford classes for fibre bundles with fibre $W$. Firstly we provide conditions on the rational cohomology of $W$ which ensure that its tautological ring is finitely-generated, and we show that these conditions cannot be completely relaxed by giving an example of a tautological ring which fails to be finitely-generated in quite a strong sense. Secondly, we provide conditions on torus actions on $W$ which ensure that the rank of the torus gives a lower bound for the Krull dimension of the tautological ring of $W$. Lastly, we give extensive computations in the tautological rings of $\mathbb{CP}^2$ and $S^2 \times S^2$.
\end{abstract}
\maketitle

\section{Introduction}

\subsection{Recollections on tautological rings}

A smooth fibre bundle $\pi : E \to B$ with closed $d$-dimensional fibre $W$ equipped with an orientation of the vertical tangent bundle $T_\pi E$ has characteristic classes defined as follows. For each characteristic class $c \in H^k(BSO(d))$ of oriented $d$-dimensional vector bundles, we may form
$$\kappa_c(\pi) := \int_\pi c(T_\pi E) \in H^{k-d}(B),$$
the \emph{generalised Mumford--Morita--Miller class} (or \emph{$\kappa$-class}) associated to $c$, by evaluating $c$ on the vector bundle $T_\pi E$ and integrating the result along the fibres of the map $\pi$. This construction may in particular be applied to the universal such fibre bundle, whose base space is the classifying space $B\Diff^+(W)$ of the topological group of orientation-preserving diffeomorphisms of $W$, to give universal characteristic classes $\kappa_c \in H^{*}(B\Diff^+(W))$. If $c$ has degree $d$ then $\kappa_c$ is a degree zero cohomology class, and may be identified with the characteristic number $\int_W c(TW)$ of $W$.

If we work in cohomology with rational coefficients then $H^*(BSO(d);\bQ)$ is generated by the Pontrjagin and Euler classes, and in this case we define the \emph{tautological ring}
$$R^*(W) \subset H^{*}(B\Diff^+(W);\bQ)$$
to be the subring generated by all classes $\kappa_c$. Our goal is to describe some quantitative and qualitative properties of these rings, for certain manifolds $W$.

Before doing so, we introduce some variants. The topological group $\Diff^+(W, *)$ of diffeomorphisms of $W$ which fix a marked point $* \in W$ has a homomorphism to $GL^+_d(\bR)$ by sending a diffeomorphism $\varphi$ to its differential $D\varphi_*$ at the marked point. On classifying spaces this gives a map
$$s : B\Diff^+(W, *) \lra BGL^+_d(\bR) \simeq BSO(d)$$
and for each $c \in H^*(BSO(d);\bQ)$ we may also form $s^*c \in H^*(B\Diff^+(W, *);\bQ)$. We let the \emph{tautological ring fixing a point} $R^*(W, *) \subset H^*(B\Diff^+(W, *);\bQ)$ be the subring generated by all the classes $\kappa_c$ and $s^*c$.

Finally, if $B\Diff^+(W, D^d)$ is the classifying space of the group of diffeomorphisms of $W$ which are the identity near a marked disc $D^d \subset W$, then we let the \emph{tautological ring fixing a disc} $R^*(W, D^d) \subset H^*(B\Diff^+(W, D^d);\bQ)$ be the subring generated by all the classes $\kappa_c$. The inclusions of diffeomorphism groups
$$B\Diff^+(W) \longleftarrow B\Diff^+(W, *) \longleftarrow B\Diff^+(W, D^d)$$
therefore yield $\bQ$-algebra homomorphisms
$$R^*(W) \lra R^*(W, *) \lra R^*(W, D^d)$$
whose composition is surjective.

These rings have been studied by Grigoriev \cite{grigoriev-relations}, and by Galatius, Grigoriev, and the author \cite{galagriran-characteristic}, mainly for the manifolds $W = \#^g S^n \times S^n$ with $n$ odd. This is the natural generalisation of the case of oriented surfaces, i.e.\ $n=1$, which has been studied in great detail: see e.g.\, \cite{Mumford, Looijenga, Faber, Morita}. Our purpose here is to explain to what extent those results apply to more general manifolds. We will only consider even-dimensional manifolds. For odd-dimensional manifolds the classes $\kappa_c$ have odd degree and so anticommute and are nilpotent, and tautological rings in this situation seem to have a different flavour.

\subsection{Finiteness}

Our first result concerns conditions under which the rings $R^*(W)$ and $R^*(W,*)$ are suitably finite.

\begin{MainThm}\label{mainThm:A}
Let $W$ be a closed smooth oriented $2n$-manifold, and assume that either
\begin{enumerate}[(H1)]
\item $H^*(W;\bQ)$ is non-zero only in even degrees, or

\item $H^*(W;\bQ)$ is non-zero only in degrees $0$, $2n$ and odd degrees, and $\chi(W) \neq 0$.
\end{enumerate}
Then
\begin{enumerate}[(i)]
\item $R^*(W)$ is a finitely-generated $\bQ$-algebra, and

\item $R^*(W, *)$ is a finitely-generated $R^*(W)$-module.
\end{enumerate}
\end{MainThm}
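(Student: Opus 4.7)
The plan is to analyze the universal oriented $W$-bundle $\pi : E \to B$, where $B = B\Diff^+(W)$ and $E \simeq B\Diff^+(W, *)$. Write $\cL_0 := e(T_\pi E)$ and $\cL_i := p_i(T_\pi E) \in H^*(E;\bQ)$; under the identification $E \simeq B\Diff^+(W, *)$ these classes agree with $s^* e$ and $s^* p_i$. Let $\tilde A \subset H^*(E;\bQ)$ be the graded-commutative $\bQ$-subalgebra generated by $\cL_0, \cL_1, \ldots, \cL_n$. Then $R^*(W,*) = \pi^* R^*(W) \cdot \tilde A$ as a $\bQ$-vector space, and the projection formula $\pi_!(\pi^*(y) \cdot x) = y \cdot \pi_!(x)$ applied to $c(\cL) \in \tilde A$ gives $R^*(W) = \bQ[\pi_!(\tilde A)]$. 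Both conclusions (i) and (ii) thus reduce to showing that $\tilde A$ is a finitely-generated $\pi^* R^*(W)$-module, together with a careful argument that push-forwards of the module generators suffice to generate $R^*(W)$ as a $\bQ$-algebra.

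To produce finitely many module generators, the plan is to use the restriction $\rho : \tilde A \to H^*(W;\bQ)$ to the fibre, whose image is the subring $\bar A \subset H^*(W;\bQ)$ generated by $e(TW)$ and the $p_i(TW)$. Under (H1) or (H2), $H^*(W;\bQ)$ is finite-dimensional and hence so is $\bar A$; choose monomial lifts $m_0 = 1, m_1, \ldots, m_N \in \tilde A$ of a $\bQ$-basis of $\bar A$. The heart of the proof is a Leray--Hirsch-type splitting, analyzed through the Serre spectral sequence for $\pi$: hypothesis (H1) forces the relevant differentials to vanish for parity reasons once even-degree lifts are exhibited, while hypothesis (H2) combined with $\chi(W) \neq 0$ ensures that $\cL_0$ itself lifts the top class of $H^*(W;\bQ)$, and the odd-degree classes of $W$ have no room to transgress (their would-be images lie in intermediate even degrees excluded by (H2)). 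A filtration-decreasing induction, together with Poincar\'e duality on the fibre giving the $\bar A$-basis a dual basis in $\bar A$ itself (so that coefficients in the decomposition can be computed as fibre integrals and hence lie in $R^*(W)$), shows $\tilde A = \sum_j \pi^* R^*(W) \cdot m_j$. This gives (ii) immediately.

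For (i), apply $\pi_!$ to the decomposition $c(\cL) = \sum_j \pi^*(r_j) m_j$ to obtain $\kappa_c = \sum_j r_j \kappa_{m_j}$, reducing every generator of $R^*(W)$ to an $R^*(W)$-linear combination of the finite set $\{\kappa_{m_1}, \ldots, \kappa_{m_N}\}$. Treating separately those $m_j$ with $\deg m_j \leq 2n$ (whose $\kappa_{m_j}$ are scalars or zero and can be absorbed) and those with $\deg m_j > 2n$ (where the cohomological degree of $r_j$ is strictly less than that of $\kappa_c$), a descending induction on cohomological degree concludes that $R^*(W) = \bQ[\kappa_{m_1}, \ldots, \kappa_{m_N}]$. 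The main obstacle will be the Leray--Hirsch splitting under (H2): there the interaction between $\cL_0$, the pulled-back classes from the base, and the odd-degree fibre classes must be controlled, and the closure of $\bar A$ under Poincar\'e duality on $H^*(W;\bQ)$ is essential for identifying the coefficients $r_j$ as elements of $R^*(W)$ rather than arbitrary elements of $H^*(B;\bQ)$.
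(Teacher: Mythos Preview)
Your proposal has a genuine gap at its core: the Leray--Hirsch-type splitting you rely on does not follow from hypotheses (H1) or (H2). In fact the paper's outline (Section~2.1) explains that the naive Cayley--Hamilton argument works \emph{only} under the extra assumption that the universal bundle $W \to E \to B$ satisfies Leray--Hirsch, and the entire point of the paper's machinery is to remove that assumption.

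Concretely, your ``parity'' argument under (H1) does not work: even if $H^*(W;\bQ)$ is concentrated in even degrees, the Serre differentials $d_r : E_r^{p,q} \to E_r^{p+r,q-r+1}$ are not forced to vanish, because you have no control over the parity of $H^*(B;\bQ)$. Knowing that the classes $\cL_i$ lift their fibre restrictions only tells you those particular fibre classes are permanent cycles; it does not make $H^*(E;\bQ)$ free over $H^*(B;\bQ)$, nor does it let you express an arbitrary monomial in the $\cL_i$ as $\sum_j \pi^*(r_j)\, m_j$. Likewise, under (H2) your claim that odd fibre classes ``have no room to transgress'' conflates fibre and base degrees: the targets of the differentials involve $H^*(B;\bQ)$, about which the hypothesis says nothing. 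There is also a secondary issue: even granting a splitting, your mechanism for showing the coefficients $r_j$ lie in $R^*(W)$ rather than in all of $H^*(B;\bQ)$ requires the image $\bar A \subset H^*(W;\bQ)$ to be closed under Poincar\'e duality, which under (H1) is not automatic (think of $W = S^2 \times S^2$, where the individual classes of the two spheres are not polynomials in $e$ and $p_1$).

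The paper's actual route is to work in the homotopy category $\mathcal{C}$ of parametrised $H\bQ$-module spectra over $B$, where the object $C = F(\Sigma^\infty_B E, H\bQ)$ is always dualisable (Lemma~2.3), regardless of whether Leray--Hirsch holds. Under (H1) one has $\wedge^{k+1} C \simeq *$ fibrewise (Lemma~2.2), and the resulting categorical Cayley--Hamilton trace identity \eqref{eq:trace2} produces a monic polynomial over $R^*(W)$ annihilating each $s^*c$; under (H2) a similar Schur-vanishing argument yields Theorem~2.5. Either way one gets integrality of $R^*(W,*)$ over $R^*(W)$, and then your final reduction (essentially the paper's Lemma~2.1) is correct. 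So the skeleton of your argument---reduce to integrality, then deduce (i) and (ii)---matches the paper, but the step establishing integrality needs the parametrised-spectra replacement for Leray--Hirsch.
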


The result under hypothesis (H2) generalises a theorem of Grigoriev \cite{grigoriev-relations}, and proceeds by establishing the same basic source of relations among $\kappa$-classes found by Grigoriev. In the case $2n=2$ this source of relations had been established by the author \cite{randal-wil-relations}, using ideas of Morita \cite{morita-families, morita-families2}. As the later results of \cite{grigoriev-relations} and the results of \cite{galagriran-characteristic} are deduced almost entirely from this basic source of relations, the same results largely follow assuming only hypothesis (H2). For example, for $g > 1$, $k$ odd, and $n \geq k$, it follows that
$$\bQ[\kappa_{ep_1}, \kappa_{ep_2}, \ldots, \kappa_{ep_{n-1}}] \lra R^*(\#^g S^k \times S^{2n-k})/\sqrt{0}$$
is surjective, which was obtained in \cite{galagriran-characteristic} only in the case $k=n$. We give details of this in Section \ref{sec:OddCohom}. 

The result under hypothesis (H1) is entirely new and its method of proof is novel. We consider a fibre bundle $W \to E \overset{\pi}\to B$ as determining a parametrised spectrum over $B$, and hence its rational ``cochains" as giving a parametrised $H\bQ$-module spectrum over $B$. We then use the notion of {Schur-finiteness} from the theory of motives to obtain a Cayley--Hamilton-type trace identity for endomorphisms of this cochain object, which establishes concrete relations among $\kappa$-classes. Later we shall describe some explicit calculations done using these relations.

\subsection{Krull dimension}

Our second main result is a general technique, continuing on from our work with Galatius and Grigoriev \cite[\S 4]{galagriran-characteristic}, for estimating the Krull dimension (for which we write $\Kdim$) of the rings $R^*(W)$ from below in terms of torus actions on $W$. The general statement is Theorem \ref{thm:main}, but the hypotheses of that theorem are somewhat involved: we state here one of its corollaries with hypotheses which are easy to verify.

\begin{MainCor}\label{mainCor:B}
Let a $k$-torus $T$ act effectively on $W$, and suppose that either
\begin{enumerate}[(i)]
\item $\chi(W) \neq 0$ and the fixed set $W^T$ is connected, or

\item the fixed set $W^T$ is discrete and non-empty.
\end{enumerate}
Then $\Kdim(R^*(W)) \geq k$.
\end{MainCor}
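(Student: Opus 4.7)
The plan is to exhibit, from the $T$-action, a ring homomorphism
$$\rho^* \colon R^*(W) \lra H^*(BT;\bQ) \cong \bQ[t_1,\ldots,t_k]$$
whose image is a subring of Krull dimension $k$; since $R^*(W)$ surjects onto this image, the bound $\Kdim(R^*(W)) \geq k$ follows.

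Because $T$ is connected, the action preserves orientation and yields a classifying map $B\rho \colon BT \to B\Diff^+(W)$ and hence $\rho^*$. By Atiyah--Bott--Berline--Vergne localisation,
$$\rho^*(\kappa_c) = \sum_F \int_F \frac{c^T(T_W|_F)}{e^T(\nu_F)}$$
for each $c \in H^*(BSO(2n);\bQ)$, the sum running over connected components of $W^T$. I would specialise to $c = e \cdot s_r$, where $s_r$ is the polynomial in Pontryagin classes taking value $\sum_i x_i^{2r}$ on a bundle with formal Chern roots $x_1,\ldots,x_n$; the extra factor of $e$ arranges for the denominator $e^T(\nu_F)$ to be cancelled, leaving a residual factor of $e(TF)$ in the integrand.

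In case (ii) the fixed set is discrete, $T_{p_i}W = \bigoplus_j \bC_{\alpha_{i,j}}$ with weights $\alpha_{i,j}\in H^2(BT;\bQ)$, and a direct computation gives
$$\rho^*(\kappa_{e\cdot s_r}) = \sum_{i,j} \alpha_{i,j}^{2r}.$$
In case (i) the fixed set is a single closed submanifold $F$ of even dimension $2\ell$, with $T_W|_F = TF \oplus \bigoplus_j L_j$ and $T$ acting via a nontrivial character $\alpha_j \in H^2(BT;\bQ)$ on each line summand $L_j$. Letting $x_i$ and $y_j$ denote the ordinary Chern roots of $TF$ and of the $L_j$, the integrand simplifies to
$$e(TF)\Bigl(\sum_i x_i^{2r} + \sum_j (y_j+\alpha_j)^{2r}\Bigr),$$
and after binomial expansion and integration over $F$, every term containing a positive power of any $x_i$ or $y_j$ vanishes because $e(TF)$ already occupies the top degree $2\ell$ of $F$, leaving
$$\rho^*(\kappa_{e\cdot s_r}) = \chi(F)\sum_j \alpha_j^{2r} = \chi(W)\sum_j \alpha_j^{2r},$$
nonzero by the hypothesis and the Lefschetz identity $\chi(W)=\chi(W^T)$.

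It remains to show that the $\bQ$-subalgebra of $\bQ[t_1,\ldots,t_k]$ generated by the power sums $q_r := \sum_\alpha \alpha^{2r}$ (over the relevant multiset of weights) has Krull dimension $k$. Effectiveness forces the $\alpha$'s to span $H^2(BT;\bQ)$: any subtorus $T' \subset T$ in their common kernel acts trivially on the relevant tangent representation or normal bundle, hence on an invariant neighbourhood of a fixed point, hence on all of the connected manifold $W$, so $T'$ is trivial. The morphism $\Psi \colon \bA^k \to \bA^M / S_M$ sending $t$ to the unordered multiset $\{\alpha_j(t)^2\}$ therefore has finite generic fibre (at most $2^k$ points, since the values of $\alpha_j(t)^2$ along a spanning subfamily determine $t$ up to sign choices), so by the fibre-dimension theorem $\overline{\IM(\Psi)}$ has dimension $k$, and the subring generated by $\{q_r\}_{r \geq 1}$ is exactly its coordinate ring. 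The main technical obstacle will be the case (i) bookkeeping: verifying cleanly that expanding $(y_j + \alpha_j)^{2r}$ against $e(TF)$ kills every term except the pure $\alpha_j^{2r}$ contribution, and interpreting the resulting coefficient as $\chi(F)$.
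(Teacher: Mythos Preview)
Your argument is correct and follows essentially the same route as the paper: both compute $\rho^*(\kappa_{ec})$ via equivariant localisation, observe that the factor of $e$ cancels the normal Euler class and leaves $e(TF)$ times the restricted characteristic class, and then argue that the resulting subring of $H^*_T$ has Krull dimension $k$. Your choice $c = e \cdot s_r$ versus the paper's $c = e \cdot p_I$ is cosmetic, since Newton's identities interconvert power sums and elementary symmetric functions in the $\alpha_j^2$.

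The one substantive difference is the endgame. The paper deduces Corollary~B from a more general statement (its Theorem~3.1), and at the key step invokes Venkov's theorem: for a faithful representation $V$ of $T$, the ring $H^*_T$ is a finite module over the subring generated by the Pontryagin classes $p_j(V)$. This gives the stronger conclusion that $H^*_T$ is \emph{integral} over $R^*_T$, not merely of the same Krull dimension. Your fibre-dimension argument for the map $\Psi \colon \gA^k \to \gA^M/S_M$ is a clean replacement that is self-contained and suffices for the Krull-dimension bound, though it yields slightly less. Both approaches use the same observation---that effectiveness forces the weights at a fixed component to span $H^2(BT;\bQ)$---to get off the ground.

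Your ``main technical obstacle'' is not really an obstacle: since $e(TF)$ already has top degree in $H^*(F)$, any term in the binomial expansion of $(y_j+\alpha_j)^{2r}$ carrying a positive power of $y_j$ (or any $x_i^{2r}$ from $TF$) is killed on multiplication by $e(TF)$, and $\int_F e(TF) = \chi(F)$ gives the coefficient. This is exactly the computation the paper carries out in deriving its formula $\kappa_{ep_I} = \sum_i A_i\, p_I(\nu_i)$.
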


For example, if $W^{2n}$ is a quasitoric manifold then case (ii) gives the estimate $\Kdim(R^*(W)) \geq n$. As another example, if $W = \#^g S^n \times S^n$ with $n$ odd then it is a consequence of the localisation theorem in  equivariant cohomology (which we shall discuss in Section \ref{sec:Loc}) that \emph{any} torus action on $W$ has connected fixed set, so by case (i) restricting the $SO(n) \times SO(n)$-action on $W$ constructed in \cite[\S 4]{galagriran-characteristic} to a maximal torus (which has rank $n-1$) we obtain $\Kdim(R^*(W)) \geq n-1$ for $g > 1$, which recovers the calculation of that paper. This example admits many variants: the construction of \cite[\S 4]{galagriran-characteristic} can be easily modified to give a $SO(k) \times SO(2n-k)$-action on $\#^g S^k \times S^{2n-k}$, so for any odd $k$ and any $n$ we have
$$\Kdim(R^*(\#^g S^k \times S^{2n-k})) \geq n-1.$$
We shall say more about this example in Section \ref{sec:OddCohom}. 

\subsection{Examples}

In the last section of the paper we exhibit several phenomena in tautological rings by calculations for specific manifolds. The following result is complementary to Theorem \ref{mainThm:A}, and shows that the hypotheses of that theorem cannot be completely removed.

\begin{MainThm}\label{mainThm:C}
There are closed smooth manifolds $W$ for which $R^*(W)/\sqrt{0}$ is not finitely-generated as a $\bQ$-algebra. There are examples of any dimension $4k+2 \geq 6$, and in dimensions $4k+2 \geq 14$ such manifolds can also be assumed to be simply-connected.
\end{MainThm}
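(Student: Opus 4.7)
The aim is to exhibit, in each allowed dimension $4k+2$, a closed manifold $W$ together with an infinite sequence of classes in $R^*(W)$ whose images in $R^*(W)/\sqrt{0}$ are algebraically independent over $\bQ$; this realises failure of finite generation in the strong sense that the Krull dimension of $R^*(W)/\sqrt{0}$ is infinite. Since both hypotheses of Theorem \ref{mainThm:A} must fail, the natural candidates have $\chi(W) = 0$ and rational cohomology in both odd and intermediate even degrees. I would start with $W = S^1 \times N^{4k+1}$ in the general case, where $N$ is chosen to carry a nontrivial torus action and to have rich rational cohomology; for the simply-connected variant in dimension $\geq 14$ the $S^1$ factor is replaced by $S^{2a+1}$ with $a \geq 1$, and $N$ is taken simply-connected.

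The second step is to construct many $W$-bundles and compute their $\kappa$-classes. The free action on the first factor combines with a torus action on $N$ to give an action of a torus $T$ on $W$, and then any principal $T$-bundle $P \to B$ induces an associated $W$-bundle $E_P = P \times_T W \to B$. Taking $B = (\bC P^\infty)^{\ell}$ for large $\ell$, one computes $\kappa_c(E_P) \in H^*(B;\bQ)$ by fibre integration combined with Atiyah--Bott--Berline--Vergne localisation, reducing the calculation to a sum over the $T$-fixed points in $W$ of rational functions in the generators of $H^*(B;\bQ)$ whose total expression is a polynomial. Varying $c$ over classes of arbitrarily high degree, and allowing $\ell$ to grow, yields an unbounded supply of explicit polynomial expressions in the image of $R^*(W)$.

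The crucial final step, which I expect to be the main obstacle, is to upgrade algebraic independence of such polynomials in a cohomology ring to algebraic independence modulo the nilradical of $R^*(W)$. Independence in a single $H^*(B;\bQ)$ does not automatically give independence modulo $\sqrt{0}$, since a genuinely nilpotent class in $R^*(W)$ can still have nontrivial image on some particular $B$. The fix is to assemble the individual constructions into one coherent family, producing a single surjection $R^*(W)/\sqrt{0} \twoheadrightarrow \bQ[x_1, x_2, \ldots]$ whose target is manifestly not finitely generated; here the torus-action methods behind Corollary \ref{mainCor:B} should be directly applicable, since they already convert torus-action data into honest Krull dimension lower bounds. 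The sharp threshold $4k+2 \geq 14$ for the simply-connected case then reflects the fact that once the first factor is a simply-connected sphere of dimension at least $3$, the complementary factor $N$ needs dimension at least $11$ in order to support the torus action required for the preceding computation to produce a genuinely infinite independent family.
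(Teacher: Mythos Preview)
Your proposal has a genuine gap, and it is precisely where you yourself flag the ``crucial final step''. You aim to show that $R^*(W)/\sqrt{0}$ has \emph{infinite} Krull dimension, by producing a surjection onto a polynomial ring in infinitely many variables. But every torus action you build has a fixed finite rank $k$, and the resulting map $R^*(W)/\sqrt{0} \to H^*_T \cong \bQ[x_1,\ldots,x_k]$ lands in a ring of Krull dimension $k$; no assembly of such maps will produce a single surjection onto $\bQ[x_1,x_2,\ldots]$, and the torus-action methods behind Corollary~\ref{mainCor:B} only ever yield \emph{finite} lower bounds on Krull dimension. So the mechanism you are relying on cannot deliver the conclusion you want.

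The paper's approach is quite different, and in particular does \emph{not} show infinite Krull dimension. It uses a \emph{single} action of $T=(S^1)^2$ on $W$, so that $R^*_T$ sits inside $\bQ[x_1,x_2]$ and has Krull dimension at most $2$. The point is that a ring can fail to be finitely generated for reasons having nothing to do with Krull dimension: the paper arranges the fixed-point data (two components with Euler characteristics $+2$ and $-2$, and carefully chosen normal representations) so that, after passing to the quotient $\bQ[x_1,x_2]/(x_2^2)$, the image of $R^*_T$ is the span $\bQ\langle 1, x_1 x_2, x_1^3 x_2, x_1^5 x_2, \ldots\rangle$. This is infinite-dimensional over $\bQ$, yet any product of two positive-degree elements is divisible by $x_2^2$ and hence zero; such a ring is trivially not finitely generated. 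The dimensional and simply-connectedness claims then come from an explicit gluing construction, not from the sphere-product ansatz you suggest.
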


To show the effectiveness of the relations between $\kappa$-classes arising in the proof of Theorem \ref{mainThm:A}, we apply them to the simplest manifold whose tautological ring is not yet known, namely $\bC\bP^2$. These relations, along with relations associated to the Hirzebruch $\mathcal{L}$-classes coming from index theory, give the following.

\begin{MainThm}\label{mainThm:D}
The ring $R^*(\bC\bP^2)$ has Krull dimension 2. The ring $R^*(\bC\bP^2, D^4)$ is a vector space of dimension at most 7 over $\bQ$. 
\end{MainThm}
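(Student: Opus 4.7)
The proof combines three ingredients: the torus action lower bound from Corollary B, the Cayley--Hamilton type relations from the proof of Theorem A (applicable since $\bC\bP^2$ satisfies (H1)), and the relations associated to Hirzebruch's $\mathcal{L}$-classes announced in the paragraph preceding the theorem.

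For the Krull dimension of $R^*(\bC\bP^2)$: the lower bound $\Kdim(R^*(\bC\bP^2)) \geq 2$ follows from Corollary B(ii), applied to the standard $T^2 = U(1)^2$ action on $\bC\bP^2$, whose fixed set consists of the three coordinate points $[1:0:0]$, $[0:1:0]$, $[0:0:1]$ and is hence discrete. For the matching upper bound $\Kdim(R^*(\bC\bP^2)) \leq 2$, I first reduce the generating set: since $T_\pi E$ is a rank-$4$ oriented bundle one has $p_2 = e^2$ rationally, so every generator of $R^*(\bC\bP^2)$ has the form $\kappa_{e^a p_1^b}$. I would then apply the relations from the proof of Theorem A to the three-dimensional vector space $H^{\mathrm{even}}(\bC\bP^2;\bQ)$; these should furnish polynomial identities of ``trace identity'' form among the $\kappa_{e^a p_1^b}$. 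Augmenting with the index-theoretic relations obtained from $\pi_* \mathcal{L}_k(T_\pi E)$ (together with the fact that the families signature of a $\bC\bP^2$-bundle is a specified class), the upshot should be that $R^*(\bC\bP^2)/\sqrt{0}$ has transcendence degree exactly $2$ over $\bQ$.

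For the dimension bound on $R^*(\bC\bP^2, D^4)$, I exploit the fact that this is a quotient of $R^*(\bC\bP^2, *)$, which by Theorem A is a finitely-generated module over $R^*(\bC\bP^2)$. The plan is to enumerate the $\kappa$-classes degree by degree, feed in the explicit relations from the previous paragraph, and verify both that the ring vanishes above some finite degree and that at most seven $\bQ$-linearly independent classes survive in the quotient.

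The principal obstacle is the explicit bookkeeping. The relations from the proof of Theorem A are derived from a construction in the category of parametrised $H\bQ$-module spectra, so extracting concrete, computable identities among specific monomials in the $\kappa_{e^a p_1^b}$ requires unpacking that construction. Combining these with the $\mathcal{L}$-class relations and tracking which monomials remain in each degree is then a finite but intricate calculation, and it is that combinatorics --- rather than any new structural input --- which underpins both the exact Krull dimension and the sharp dimension bound.
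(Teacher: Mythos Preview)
Your outline matches the paper's approach: the lower bound comes from the toric $T^2$-action (either via Corollary~B(ii) as you say, or by the paper's direct computation of $\kappa_{p_1^2},\kappa_{ep_1},\kappa_{p_1^4}$ in $H^*_T$), and the upper bound comes from the cubic trace identity polarised via $c=e+Ap_1$ (giving four relations in $R^*(\bC\bP^2,*)$) together with the Hirzebruch relations $\kappa_{\mathcal{L}_i}=0$---which genuinely \emph{vanish} since $\mathrm{Aut}(H^2(\bC\bP^2;\bZ),\,\cdot\,)\cong\bZ/2$ has trivial rational cohomology---cutting the generating set down to $\kappa_{p_1^2},\kappa_{ep_1},\kappa_{p_1^4}$ and the variety to at most a plane union a line in $\gA^3$; the $D^4$ bound then follows by setting $e=p_1=0$ in the ideal for $R^*(\bC\bP^2,*)$ and computing that the quotient has $\bQ$-dimension $7$. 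As you anticipate, the substance is entirely in carrying out this explicit bookkeeping.
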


In fact, we show that the ring $R^*(\bC\bP^2)/\sqrt{0}$ is equal to either
$$\bQ[\kappa_{p_1^2}, \kappa_{ep_1}, \kappa_{p_1^4}]/(4 \kappa_{p_1^2}-7 \kappa_{e p_1})\cap (\kappa_{p_1^2}-2 \kappa_{e p_1},316 \kappa_{e p_1}^3-343 \kappa_{p_1^4}),$$
whose variety is the union of a line and a plane, or
$$\bQ[\kappa_{p_1^2}, \kappa_{ep_1}, \kappa_{p_1^4}]/(4 \kappa_{p_1^2}-7 \kappa_{e p_1}),$$
whose variety is a plane. It would be interesting to determine which case occurs, and very interesting if it is the first case.

Finally, we give a calculation which shows that the lower bound of Corollary \ref{mainCor:B} is not always sharp. The 3-torus cannot act effectively on $S^2 \times S^2$, and yet

\begin{MainThm}\label{mainThm:E}
The ring $R^*(S^2 \times S^2)$ has Krull dimension 3 or 4.
\end{MainThm}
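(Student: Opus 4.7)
The plan is to combine the general finite-generation result of Theorem~\ref{mainThm:A} with an explicit analysis of relations, in the spirit of the $\bC\bP^2$ calculation for Theorem~\ref{mainThm:D}. Since $H^*(S^2 \times S^2;\bQ)$ is supported only in the even degrees $0, 2, 4$, hypothesis (H1) of Theorem~\ref{mainThm:A} applies, so $R^*(S^2 \times S^2)$ is a finitely-generated $\bQ$-algebra and its Krull dimension is a well-defined finite integer which we aim to bracket between $3$ and $4$.

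For the upper bound $\Kdim \leq 4$ I would extract the Cayley--Hamilton type relations produced in the proof of Theorem~\ref{mainThm:A}(H1). Since $H^2(S^2 \times S^2;\bQ)$ has rank $2$, the relevant ``matrix'' is $2 \times 2$ and its characteristic polynomial is quadratic; the resulting quadratic relations, iterated, should express $\kappa_{p_1^a e^b}$ for large $a+b$ as polynomials in those with small $a+b$, modulo nilpotents. Combined with the Hirzebruch $\cL$-class identities coming from index theory (as used for $\bC\bP^2$ in Theorem~\ref{mainThm:D}), the aim is to present $R^*(S^2 \times S^2)/\sqrt{0}$ as a quotient of a polynomial ring in at most four variables, with natural candidate generators being the three degree-$4$ classes $\kappa_{p_1^2}, \kappa_{p_1 e}, \kappa_{e^2}$ together with one surviving higher-degree generator.

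For the lower bound $\Kdim \geq 3$, Corollary~\ref{mainCor:B} is insufficient: the standard effective $T^2$-action by rotations has $4$ isolated fixed points and only yields $\Kdim \geq 2$, and $S^2 \times S^2$ admits no effective $T^3$-action. The strategy is therefore to exhibit three algebraically independent $\kappa$-classes directly, for instance by combining the two classes detected by the $T^2$-action with a third detected by a genuinely non-product family of $S^2 \times S^2$-bundles. A natural candidate is a family whose monodromy lies in $\Diff^+(S^2 \times S^2)$ but not in $\Diff^+(S^2) \wr \Sigma_2$, or one built from a rank-$4$ oriented bundle whose vertical tangent bundle does not split as a sum of rank-$2$ bundles; an explicit computation of the relevant $\kappa$-classes should then demonstrate algebraic independence from the two $T^2$-detected classes.

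The principal difficulty is twofold. For the upper bound, the work is the careful bookkeeping required to turn the abstract Cayley--Hamilton relations into explicit polynomial identities and to confirm that, together with the $\cL$-class relations, they rule out a fifth algebraically independent generator. For the lower bound, the work is finding the right ``asymmetric'' family: bundles arising from products $\bP(V_1)\times_B \bP(V_2)$ of complex rank-$2$ bundles have their $\kappa$-classes landing in a symmetric subring of transcendence degree only $2$, so a genuinely new construction is required, and it is presumably the difficulty of improving either bound that is responsible for the exact Krull dimension being pinned down only to the set $\{3,4\}$.
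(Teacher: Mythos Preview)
Your overall architecture matches the paper's, but two concrete points are off.

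\textbf{Upper bound.} The Cayley--Hamilton relation from the proof of Theorem~\ref{mainThm:A} is a polynomial of degree $k = \dim_\bQ H^*(W)$, not of degree $\dim_\bQ H^2(W)$. For $S^2 \times S^2$ this is $k=4$, so the trace identity gives a \emph{quartic} in $c$, not a quadratic; the $2 \times 2$ picture you describe is wrong. The paper polarises this quartic via $c = e + A p_1$ to get five relations, combines them with the Hirzebruch $\mathcal{L}$-relations, and finds that $R^*(S^2\times S^2)$ is generated by the \emph{seven} classes $\kappa_{p_1^2}, \kappa_{p_1^3}, \kappa_{e p_1}, \kappa_{ep_1^2}, \kappa_{e^3}, \kappa_{e^3 p_1}, \kappa_{e^5}$. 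The bound $\Kdim \leq 4$ then comes not from cutting the generating set down to four elements, but from checking (in {\tt Macaulay2}) that the resulting ideal of relations has codimension~$3$ in this $7$-variable polynomial ring.

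\textbf{Lower bound.} Your proposed ``asymmetric family'' is speculative, and the paper does something quite different and more concrete. It uses a \emph{countable family} of $T^2$-actions $\phi_k$ on $S^2 \times S^2$, one for each $k \in \bN$, arising from Karshon's classification via Delzant polytopes; each has four isolated fixed points with explicitly listed weights. Any single $\phi_k$ gives only $\Kdim \geq 2$, as you correctly observe, but the intersection $\bigcap_k \Ker(\psi_k)$ of the kernels of the associated maps $R^*(S^2\times S^2) \to \bQ[x_1,x_2]$ has codimension~$3$ (computed by eliminating $x_1, x_2, k$), which yields $\Kdim \geq 3$. So the extra transcendence degree comes from varying the torus action, not from a non-toric bundle.
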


The lower bound on the Krull dimension comes from a 1-parameter family of 2-torus actions, to which the method of proof of Corollary \ref{mainCor:B} is applied. The upper bound comes from the relations between $\kappa$-classes which we found in the proof of Theorem \ref{mainThm:A}.

\subsection*{Acknowledgements} I am grateful to S{\o}ren Galatius for an enlightening discussion of the ideas in Section \ref{sec:fg} of this paper, and to Jens Reinhold and Dexter Chua for spotting several errors. I would also like to thank the anonymous referee for their useful suggestions. I was partially supported by EPSRC grant EP/M027783/1.

\section{Tautological relations and finite generation}\label{sec:fg}

\emph{Unless specified, all cohomology in this paper will be taken with $\bQ$ coefficients.}

\vspace{1ex}

In this section we describe some techniques for obtaining relations between tautological classes, which for some manifolds $W$ suffice to establish that $R^*(W)$ is finitely-generated. The techniques we will introduce are perhaps more important than any particular application that can be made, but Theorem \ref{mainThm:A} will be a consequence. 

\subsection{Integrality}

One consequence of conclusion (ii) of Theorem \ref{mainThm:A} is that $R^*(W,*)$ is integral over $R^*(W)$. In fact, this integrality statement implies the two finiteness statements, as follows.

\begin{proposition}\label{prop:Integral}
Suppose that $W$ is a closed smooth oriented $d$-manifold such that $R^*(W,*)$ is integral over $R^*(W)$. Then
\begin{enumerate}[(i)]
\item $R^*(W)$ is a finitely-generated $\bQ$-algebra, and

\item $R^*(W, *)$ is a finitely-generated $R^*(W)$-module.
\end{enumerate}
\end{proposition}

For the sake of clarity, we will first formulate and prove a purely algebraic statement of which this proposition is a consequence.

\begin{lemma}\label{lem:AlgIntegral}
Let $\pi : B \to E$ be a homomorphism of $\bQ$-algebras, $g : E \to B$ be a homomorphism of $B$-modules (where $E$ is made into an $B$-module via $\pi$), and $C \subset E$ be a finitely-generated subalgebra, generated by $\{c_i\}_{i \in I}$. Let $R \subset B$ be the subalgebra generated by $g(C)$. If each $c_i$ is integral over $\pi(R) \subset E$, with
$$c_i^{n_i} = \sum_{j=0}^{n_i-1} \pi(a_{i,j}) c_i^j$$
for some $a_{i,j} \in R$, then $R$ is generated by the finitely-many elements 
$$\{a_{i,j}\}_{i, j \in I} \cup \{ g(\prod c_i^{m_i}) \, \vert \, m_i < n_i\}.$$
\end{lemma}
\begin{proof}
By definition $R$ is generated by the elements $g(\prod c_i^{k_i})$, so we must show that these lie in the subring generated by the indicated elements. By assumption we may write $c_i^{k_i}$ as a $\bQ[\pi(a_{i,j})]$-linear combination of terms $c_i^j$ with $j < n_i$, and so we may write $\prod c_i^{k_i}$ as a $\bQ[\pi(a_{i,j})]$-linear combination of terms $\prod c_i^{m_i}$ with $m_i < n_i$. As $g$ is $B$- and hence $R$-linear, we may therefore write $g(\prod c_i^{k_i})$ as a $\bQ[a_{i,j}]$-linear combination of terms $g(\prod c_i^{m_i})$ with $m_i < n_i$. Thus $g(\prod c_i^{k_i})$ lies in the subring generated by the indicated elements.
\end{proof}

\begin{proof}[Proof of Proposition \ref{prop:Integral}]
The universal fibre bundle with fibre $W$ may be identified with the natural projection
$$p : B\Diff^+(W, *)  \cong E\Diff^+(W) \times_{\Diff^+(W)} W \lra B\Diff^+(W).$$
This gives a $\bQ$-algebra homomorphism
$$p^* : H^*(B\Diff^+(W);\bQ) \lra H^*(B\Diff^+(W, *);\bQ)$$
by pullback and an $H^*(B\Diff^+(W);\bQ)$-module homomorphism
$$p_!: H^*(B\Diff^+(W, *);\bQ) \to H^{*-d}(B\Diff^+(W);\bQ)$$
by fibre integration. As we have described in the introduction, taking the differential at the marked point gives a map $s : B\Diff^+(W, *) \to BGL_d^+(\bR) \simeq BSO(d)$, which classifies the vertical tangent bundle of the universal fibre bundle $p$.

Applying Lemma \ref{lem:AlgIntegral} with $B = H^*(B\Diff^+(W);\bQ)$, $E = H^*(B\Diff^+(W, *);\bQ)$, $\pi = p^*$, $g = p_!$, and $C = \mathrm{Im}(s^*)$ (using that $H^*(BSO(d);\bQ)$ is finitely-generated and that $C \subset R^*(W, *)$ so by assumption consists of elements which are integral over $R = R^*(W)$) shows that $R^*(W) \subset H^*(B\Diff^+(W);\bQ)$ is finitely-generated, proving the first part.

For the second part, as $H^*(BSO(d))$ is a finitely-generated $\bQ$-algebra, we know that $R^*(W, *)$ is a finitely-generated $R^*(W)$-algebra, so under the integrality assumption it follows that $R^*(W, *)$ is in fact finitely-generated as a $R^*(W)$-module.
\end{proof}

Thus in order to prove Theorem \ref{mainThm:A} we shall actually show that $R^*(W,*)$ is integral over $R^*(W)$.

\subsection{Outline}

To motivate the proof of Theorem \ref{mainThm:A} let us first explain its proof under hypothesis (H1) and an additional assumption: that the universal smooth oriented fibre bundle $W \to E \overset{\pi}\to B = B\Diff^+(W)$ satisfies the Leray--Hirsch property in rational cohomology, i.e.\ that $\pi_1(B)$ acts trivially on $H^*(W)$ and the Serre spectral sequence for $\pi: E \to B$ collapses. (The proof of Theorem \ref{mainThm:A} under hypothesis (H1) is a technical device which allows the following argument to be made without this additional assumption.)

%We first develop some algebra. 
Under this assumption, $H^*(E)$ is a free finitely-generated $H^*(B)$-module, say with basis $\bar{x}_1, \ldots, \bar{x}_k \in H^*(E)$ lifting a basis $x_1, \ldots, x_k$ for $H^*(W)$. Furthermore, as $W$ has all its cohomology in even degrees, $H^{ev}(E)$ is a free finitely-generated module over the commutative ring $H^{ev}(B)$, with basis the $\bar{x}_i$. For $x \in H^{ev}(E)$ the map
$$- \cdot x : H^{ev}(E) \lra H^{ev}(E)$$
is a $H^{ev}(B)$-module map, so has a characteristic polynomial $\chi_x(z) \in H^{ev}(B)[z]$, and by the Cayley--Hamilton theorem (for finite modules over a commutative ring, alias the determinantal trick) we have $\chi_x(x)=0 \in H^{ev}(E)$. Furthermore, the coefficients of the characteristic polynomial $\chi_x(z)$ may be expressed as polynomials in the elements
$$\mathrm{Tr}(- \cdot x^i : H^{ev}(E) \to H^{ev}(E)) \in H^{ev}(B),$$
which make sense as $H^{ev}(E)$ is a finite free $H^{ev}(B)$-module. The following lemma relates such traces to fibre-integration and the Euler class of the vertical tangent bundle.

\begin{lemma}\label{lem:TraceIsTransfer}
For any $x \in H^{ev}(E)$ we have
$$\mathrm{Tr}(- \cdot x : H^{ev}(E) \to H^{ev}(E)) = \int_\pi e(T_\pi E) \cdot x  \in H^{ev}(B).$$
\end{lemma}

We apply the above discussion to $x = c(T_\pi E)$ for $c \in H^*(BSO(2n))$ a characteristic class of oriented $2n$-dimensional vector bundles. Then the polynomial $\chi_x(z)$ is monic, has coefficients in the subring generated by the $\kappa_{ec^i} = \int_\pi e(T_\pi E) \cdot c(T_\pi E)^i$, and satisfies $\chi_x(c(T_\pi E))=0$. Thus we deduce that $c(T_\pi E) = s^*c$ is integral over $R^*(W)$ and hence that $R^*(W,*)$ is integral over $R^*(W)$. Theorem \ref{mainThm:A} in the case we are considering follows by applying Proposition \ref{prop:Integral}. It remains to prove this lemma.

\begin{proof}[Proof of Lemma \ref{lem:TraceIsTransfer}]
Rational cohomology classes are determined by their evaluations against rational homology classes, and any rational homology class is carried on a map from a smooth oriented manifold. So we may assume that $\pi: E \to B$ is a fibre bundle over a smooth oriented manifold, still satisfying the Leray--Hirsch property.

The pairing
\begin{align*}
\langle -, - \rangle : H^*(E) \otimes_{H^*(B)} H^*(E) &\lra H^{*-d}(B)\\
a \otimes b &\longmapsto \int_\pi a\cdot b
\end{align*}
is non-singular, as in the basis $\bar{x}_i$ its matrix $X$ agrees modulo the ideal $H^{*>0}(B) $ of $H^*(B)$ with that of the intersection form of $W$ in the basis $x_i$, so $\det(X)\in H^*(B)$ is a unit modulo $H^{*>0}(B)$, and hence is a unit as the ideal $H^{*>0}(B)$ is nilpotent. Let us write $\bar{x}_i^\vee$ for the dual $H^*(B)$-module basis of $H^*(E)$ with respect to this pairing, characterised by $\langle \bar{x}_i, \bar{x}_j^\vee \rangle = \delta_{ij}$. Then for any $x \in H^{ev}(E)$ we have
$$\mathrm{Tr}(- \cdot x : H^{ev}(E) \to H^{ev}(E)) = \sum_i \langle \bar{x}_i \cdot x, \bar{x}_i^\vee \rangle=  \int_\pi\left(\left(\sum_i \bar{x}_i \cdot \bar{x}_i^\vee \right) \cdot x \right),$$
so to establish the claimed formula we must show that $\sum_i \bar{x}_i \cdot \bar{x}_i^\vee = e(T_\pi E) \in H^{ev}(E)$.

The diagonal map $\Delta : E \to E \times_B E$ is a map of smooth oriented manifolds, whose normal bundle is identified with $T_\pi E$. Thus the Euler class $e(T_\pi E) \in H^d(E)$ may be described as $\Delta^* \Delta_!(1)$. It is therefore enough to show that 
$$\Delta_!(1) = \sum_i \bar{x}_i \otimes \bar{x}_i^\vee \in H^*(E \times_B E) = H^*(E) \otimes_{H^*(B)} H^*(E).$$
This is the parametrised analogue of the classical formula \cite[Theorem 11.11]{milnostash-characteristic} for the Poincar{\'e} dual of the diagonal, and we shall prove it in the same way. For any $b \in H^*(B)$ we calculate
\begin{align*}
\int_{E \times_B E} \Delta_!(1)  \cdot ((b \cdot \bar{x}_j^\vee) \otimes \bar{x}_k) &= \int_E b \cdot \bar{x}_j^\vee \cdot \bar{x}_k \\
 &= \int_B b \int_\pi \bar{x}_j^\vee \cdot \bar{x}_k = \delta_{jk} \int_B b
\end{align*}
and 
\begin{align*}
\int_{E \times_B E} \left(\sum_i \bar{x}_i \otimes \bar{x}_i^\vee \right) \cdot ((b \cdot \bar{x}_j^\vee) \otimes \bar{x}_k) &= \sum_i \int_{E \times_B E} (b \cdot \bar{x}_j^\vee \cdot \bar{x}_i) \otimes ( \bar{x}_k \cdot \bar{x}_i^\vee) \\
 &= \sum_i \int_B b \int_{\pi \times_B \pi} (\bar{x}_j^\vee \cdot \bar{x}_i) \otimes ( \bar{x}_k \cdot \bar{x}_i^\vee) \\
&= \sum_i \delta_{ij} \delta_{ki} \int_B b = \delta_{jk} \int_B b.
\end{align*}
As the classes $(b \cdot \bar{x}_j^\vee) \otimes \bar{x}_k$ generate $H^*(E \times_B E)$ as a $\bQ$-module, it follows from Poincar{\'e} duality for $E \times_B E$ that $\Delta_!(1) = \sum_i \bar{x}_i \otimes \bar{x}_i^\vee$, as required.
\end{proof}

\subsection{Parametrised spectra and Schur functors}

The technical device we shall use to attempt the argument of the previous section without the Leray--Hirsch assumption is to consider a fibre bundle as a parametrised manifold over its base, and make the argument in the parametrised setting. In order to do so, we shall suppose that $B$ is a connected CW-complex, and work in a symmetric monoidal category $(\mathsf{Sp}_{/B}, \wedge_B, S^0_B)$ of parametrised spectra over $B$. For concreteness we take the category developed by May--Sigurdsson \cite{MaySig}\footnote{However, our arguments are not model-dependent and can be applied in the $\infty$-categorical formalism of Ando, Blumberg, Gepner, Hopkins, and Rezk \cite{ABGHR1, ABG}, and presumably even in more na{\"i}ve models of parametrised spectra.}. We will write $r : B \to \{*\}$ for the unique map; then, as $\mathsf{Sp}_{/*} = \mathsf{Sp}$, by \cite[Theorem 11.4.1]{MaySig} there are right and left adjoint functors
$$r^* : \mathsf{Sp} \lra \mathsf{Sp}_{/B} \quad \text{ and }\quad r_! : \mathsf{Sp}_{/B} \lra \mathsf{Sp},$$
(apart from this map, the notation $(-)_!$ will always denote Gysin maps). The functor $r^*$ is strong monoidal.

Our argument applies more generally than to oriented fibre bundles: for now, we let $\pi : E \to B$ be a Hurewicz fibration (later we will add a finiteness hypothesis to the fibres of $\pi$). This defines a parametrised spectrum $\Sigma^\infty_B E \in \mathsf{Sp}_{/B}$; we shall abuse notation and continue to call it $E$. Note that $r_!(E) \in \mathsf{Sp}$ is the suspension spectrum $\Sigma^\infty E_+$.

The ring spectrum $H\bQ$ has a 2-periodic version
$$HP \bQ = \bigvee_{i \in \bZ} \Sigma^{2i} H\bQ,$$
and we write $\pi_*(HP \bQ) = \bQ[t^{\pm 1}]$ with $t \in \pi_2(HP\bQ)$. The constant parametrised spectra $H_B\bQ := r^*(H\bQ)$ and $HP_B\bQ := r^*(HP\bQ)$ define ring objects in $\mathsf{Sp}_{/B}$, and the main objects we will consider are the function objects 
$$C := F_B(E, H_B\bQ) \quad\text{ and }\quad CP := F_B( E, HP_B\bQ).$$
These are again ring objects, using the fibrewise diagonal map on $E$ and the multiplication on $H_B\bQ$ and $HP_B\bQ$; we write $\mu$ for the multiplication on either object. The map $E \to *$ gives ring maps $H_B \bQ \to C$ and $HP_B\bQ \to CP$, making them $H_B\bQ$- and $HP_B\bQ$-modules respectively. 

Let us write $(\HBmod, \otimes, H_B\bQ)$ for the homotopy category of $H_B\bQ$-module spectra, with derived smash product of $H_B\bQ$-modules as the symmetric monoidal structure and unit $H_B\bQ$; similarly write $(\HPBmod, \otimes, HP_B\bQ)$ for the homotopy category of $HP_B\bQ$-module spectra. We have $C \in \HBmod$ and $CP \in \HPBmod$, and we can calculate
\begin{align*}
[\Sigma^d H_B\bQ, C]_{\HBmod} &= [\Sigma^d S_B^0, C]_{\mathsf{Sp}_{/B}} \\
&= [E, \Sigma^{-d}H_B \bQ]_{\mathsf{Sp}_{/B}} =[E, r^*(\Sigma^{-d}H \bQ)]_{\mathsf{Sp}_{/B}}\\
&= [\Sigma^\infty E_+, \Sigma^{-d} H\bQ]_{\mathsf{Sp}}= H^{-d}(E)
\end{align*}
and
\begin{align*}
[HP_B\bQ, CP]_{\HPBmod} &= [S_B^0, CP]_{\mathsf{Sp}_{/B}} \\
&= [ E, HP_B \bQ]_{\mathsf{Sp}_{/B}} =  [ E, r^*(HP \bQ)]_{\mathsf{Sp}_{/B}}\\
&= \left[\Sigma^\infty E_+, \bigvee_{i \in \bZ} \Sigma^{2i} H\bQ\right]_{\mathsf{Sp}} = \bigoplus_{i \in \bZ} H^{2i}(E).
\end{align*}
Both $\HBmod$ and $\HPBmod$ are $\bQ$-linear tensor categories (i.e.\ categories enriched in $\bQ$-modules, equipped with a symmetric monoidal structure which is an enriched functor) which are idempotent complete (the retract associated to an endomorphism $e : X \to X$ which is idempotent up to homotopy may be taken to be the homotopy colimit of a diagram $X \overset{e}\to X\overset{e}\to X \overset{e}\to \cdots$ of modules over the appropriate ring object).

We must now recall a little representation theory of symmetric groups; we need nothing beyond Lecture 4 of \cite{FultonHarris}. To each partition $\lambda$ of $n$ there is associated an irreducible representation $S^\lambda$ of $\Sigma_n$, with character $\chi_\lambda$. This character takes rational (in fact, integer) values, so we may form the element
$$d_\lambda := \frac{\dim S^\lambda}{n!} \sum_{\sigma \in \Sigma_n} \chi_{\lambda}(\sigma) \cdot \sigma \in \bQ[\Sigma_n],$$
which is central (as $\chi_\lambda$ is a class function) and idempotent (the coefficient $\tfrac{\dim S^\lambda}{n!}$ is chosen to make this so). For any object $X$ in a $\bQ$-linear tensor category $(\mathsf{D}, \otimes, \mathbbm{1})$, the action of the $n$th symmetric group $\Sigma_n$ on $X^{\otimes n}$ yields a map of $\bQ$-algebras
$$e: \bQ[\Sigma_n] \lra \mathrm{Hom}_\mathsf{D}(X^{\otimes n}, X^{\otimes n}),$$
so $e(d_\lambda)$ is an idempotent endomorphism of $X^{\otimes n}$ in $\mathsf{D}$; if $\mathsf{D}$ is idempotent complete then we write $S_\lambda(X)$ for the corresponding retract of $X^{\otimes n}$ in $\mathsf{D}$: this defines the \emph{Schur functor} $S_\lambda(-)$ on $\mathsf{D}$. In this paper the trivial and sign representations will play the most prominent role, and we write 
$$\wedge^n X := S_{(1^n)}(X) \quad \text{ and } \quad \mathrm{Sym}^n(X) := S_{(n)}(X),$$
or, if we wish to emphasise the ambient category, $\wedge^n_\mathsf{D}$ and $\mathrm{Sym}^n_\mathsf{D}$.

The categories $\HBmod$ and $\HPBmod$ are idempotent complete $\bQ$-linear tensor categories, so there are defined Schur functors on both categories. Furthermore, let us write $(\mathsf{V}_\bQ, \otimes_\bQ, \bQ)$ for the symmetric monoidal category of graded $\bQ$-modules, and $(\mathsf{V}_{\bQ[t^{\pm 1}]}, \otimes_{\bQ[t^{\pm 1}]}, \bQ[t^{\pm 1}])$ for the symmetric monoidal category of graded $\bQ[t^{\pm 1}]$-modules (where $t$ has degree 2). These are also idempotent complete $\bQ$-linear tensor categories. Taking homotopy groups defines functors
\begin{align*}
\pi_*(-) : H\bQ\text{-}\mathsf{mod} &\lra \mathsf{V}_\bQ\\
\pi_*(-) : HP\bQ\text{-}\mathsf{mod} &\lra \mathsf{V}_{\bQ[t^{\pm 1}]}
\end{align*}
which are strong monoidal (by the K{\"u}nneth theorem, as every graded $\bQ$- or $\bQ[t^{\pm 1}]$-module is free). Taking derived homotopy fibres at $b \in B$ defines functors
\begin{align*}
(-)_b : \HBmod &\lra H\bQ\text{-}\mathsf{mod}\\
(-)_b : \HPBmod &\lra HP\bQ\text{-}\mathsf{mod}
\end{align*}
which are strong monoidal and reflect isomorphisms (by definition, cf.\ \cite[Definition 12.3.4]{MaySig}, as $B$ is assumed path-connected). Finally,
$$- \otimes_\bQ \bQ[t^{\pm 1}] : \mathsf{V}_\bQ \lra \mathsf{V}_{\bQ[t^{\pm 1}]}$$
is also strong monoidal. In particular, all of the above functors preserve Schur functors.

\begin{lemma}\label{lem:SchurKills}
Let $X \in \HBmod$ or $\HPBmod$ be such that for each fibre $X_b$ we have $S_{\lambda}(\pi_*(X_b))=0$. Then $S_{\lambda}(X) \simeq *$.
\end{lemma}
\begin{proof}
As taking homotopy groups preserves Schur functors, we have $\pi_*(S_\lambda(X_b)) = S_{\lambda}(\pi_*(X_b))$ which vanishes by assumption. Thus $S_\lambda(X_b) \simeq *$, so as taking derived fibres preserves Schur functors it follows that $S_\lambda(X)_b \simeq *$. Thus the map from $S_\lambda(X)$ to the terminal object is an equivalence on derived fibres, and hence an equivalence, as taking derived fibres reflects isomorphisms.
\end{proof}

\subsection{Duals, trace, and transfer}

We recall the framework of categorical traces, from \cite{DoldPuppe}. 
%We now pass to topology.  We wish to use the description of the Becker--Gottlieb transfer as a categorical trace as in \cite{DoldPuppe} and \cite[Section 15.2]{MaySig}. Let us recall the required framework. 
If $(\mathsf{C}, \otimes, \mathbbm{1})$ is a symmetric monoidal category and $X \in \mathsf{C}$ is an object, a \emph{strong dual} of $X$ is an object $X^\vee \in \mathsf{C}$ and morphisms
$$\epsilon : X^\vee \otimes X \lra \mathbbm{1} \quad\quad\quad \eta : \mathbbm{1} \lra X \otimes X^\vee$$
such that the compositions $(X \otimes \epsilon) \circ (\eta \otimes X)$ and $(\epsilon \otimes X^\vee)\circ(X^\vee \otimes \eta)$ are the identity maps of $X$ and $X^\vee$ respectively. If $f : X \to Y$ is a map of objects having strong duals, then the dual of $f$ is
$$f^\vee : Y^\vee \overset{Y^\vee \otimes \eta}\lra Y^\vee \otimes X \otimes X^\vee \overset{Y^\vee \otimes f \otimes X^\vee}\lra Y^\vee \otimes Y \otimes X^\vee \overset{\epsilon \otimes X^\vee}\lra X^\vee.$$

If $f : X \to X$ is an endomorphism of $X$, the \emph{trace of $f$} is the composition
$$\mathrm{Tr}(f) : \mathbbm{1} \overset{\eta}\lra X \otimes X^\vee \cong X^\vee \otimes X \overset{f^\vee \otimes X}\lra X^\vee \otimes X\overset{\epsilon}\lra \mathbbm{1}.$$
This agrees with the perhaps more obvious choice
$$\mathrm{Tr}(f) : \mathbbm{1} \overset{\eta}\lra X \otimes X^\vee \overset{f \otimes X^\vee}\lra X \otimes X^\vee \cong X^\vee \otimes X \overset{\epsilon}\lra \mathbbm{1},$$
but the first definition is that of \cite{DoldPuppe}. Generalising this more obvious choice, if $f : A \otimes X \to B \otimes X$ is a morphism then the \emph{trace of $f$ over $X$} is the composition
$$\mathrm{Tr}^X(f) : A \overset{A \otimes \eta}\lra A \otimes X \otimes X^\vee \overset{f \otimes X^\vee}\lra B \otimes X \otimes X^\vee \cong B \otimes X^\vee \otimes X \overset{B \otimes \epsilon}\lra B.$$

%It is easy to verify that $\mathrm{Tr}(f^\vee) = \mathrm{Tr}(f)$. 
If $X$ is in addition equipped with a comultiplication $d : X \to X \otimes X$ then the \emph{transfer of $f$} is
$$\tau(f) : \mathbbm{1} \overset{\eta}\lra X \otimes X^\vee \cong X^\vee \otimes X \overset{f^\vee \otimes d}\lra X^\vee \otimes X \otimes X\overset{\epsilon \otimes X}\lra X.$$

First, consider the symmetric monoidal category given by the homotopy category of $(\mathsf{Sp}_{/B}, \wedge_B, S^0_B)$. 

\begin{lemma}\label{lem:EDualisable}
If $\pi : E \to B$ is a Hurewicz fibration and its fibre has the homotopy type of a finite CW-complex, then its associated parametrised spectrum $\Sigma^\infty_B E$ is a strongly dualisable object in the homotopy category of parametrised spectra.
\end{lemma}
\begin{proof}
This follows from Theorem 15.1.1 of \cite{MaySig}.
%The map $\pi \sqcup \mathrm{Id}_B : E \sqcup B \to B$ with the obvious section is an ex-fibration over a finite CW complex and is well-based. Thus it is an ``ex-fibration" in the sense given at the bottom of page 112 of \cite{BGFib}, and hence $E \in \mathsf{Ho}(\mathsf{Sp}_{/B})$ is strongly dualisable by \cite[Theorem 4.7]{BGFib}.
\end{proof}

Suppose then that $\pi : E \to B$ is a Hurewicz fibration and its fibre has the homotopy type of a finite CW-complex. Recall that we abuse notation by writing $E$ for $\Sigma^\infty_B E$. The fibrewise suspension of the fibrewise diagonal map $\Delta : E \to E \times_B E$ gives a comultiplication on the object $E$, we may thus form 
$$\trf_\pi = \tau(\mathrm{Id}_{E}) : S_B^0 \lra E.$$
On applying $r_! : \mathsf{Sp}_{/B} \to \mathsf{Sp}$ this gives a map of spectra $\Sigma^\infty B_+ \to \Sigma^\infty E_+$, which on cohomology gives a map
$$\trf^*_\pi : H^*(E) \lra H^*(B),$$
the \emph{Becker--Gottlieb transfer}. See \cite{BGFib} or \cite[Section 15.3]{MaySig} for this construction of the Becker--Gottlieb transfer. When $\pi : E \to B$ is an oriented smooth fibre bundle, by \cite[Theorem 4.3]{beckegottl-transfer} we have the identity
\begin{equation}\label{eq:TrfEuler}
\trf^*_\pi(-) = \int_\pi e(T_\pi E) \cdot -  : H^*(E) \lra H^*(B).
\end{equation}
In particular for $c \in H^*(BSO(2n))$ we have that $\trf^*_\pi(c(T_\pi E)) = \kappa_{ec}$ is a tautological class.

Let us now consider the symmetric monoidal categories $(\HBmod, \otimes, H_B\bQ)$ and $(\HPBmod, \otimes, HP_B\bQ)$.

\begin{corollary}\label{cor:CCPDualisable}
If $\pi: E \to B$ is a Hurewicz fibration and its fibre has the homotopy type of a finite CW-complex, then $C = F_B(E, H_B\bQ)$ is a dualisable object of $\HBmod$, and $CP = F_B(E, HP_B\bQ)$ is a dualisable object of $\HPBmod$.
\end{corollary}
\begin{proof}
The functor
$$F_B(-, H_B\bQ) : \mathsf{Ho}(\mathsf{Sp}_{/B}) \lra \HBmod$$
has a monoidality given by the adjoint of the morphism
$$X \wedge_B Y \wedge_B (F_B(X, H_B\bQ) \wedge_{H_B\bQ} F_B(Y, H_B\bQ)) \lra H_B\bQ \wedge_B H_B\bQ \lra H_B\bQ$$
given by evaluation and product. This is a strong monoidality: the induced morphism
$$F(X_b, H\bQ) \wedge_{H\bQ} F(Y_b, H\bQ) \lra F(X_b \wedge Y_b, H\bQ)$$
on derived fibres is a weak equivalence (by the K{\"u}nneth theorem, as every $\pi_*(H\bQ)=\bQ$-module is free). As $E \in \mathsf{Ho}(\mathsf{Sp}_{/B})$ is strongly dualisable by Lemma \ref{lem:EDualisable}, so is $C = F_B(E, H_B\bQ)$, because strong monoidal functors preserve (strong) duals. The argument for $CP$ is identical.
\end{proof}

\subsection{Schur-finiteness and trace identities}\label{sec:TraceId}

Deligne has introduced \cite[\S 1]{Deligne} the notion of \emph{Schur-finiteness} of an object $X$ in an idempotent complete $\bQ$-linear tensor category to be the property that $S_\lambda(X)$ is trivial for some partition $\lambda \vdash n$. In this section we consider this notion applied to the category $(\HPBmod, \otimes, HP_B\bQ)$ and so consider an $HP_B\bQ$-module $X$ such that $S_{\lambda}(X) \simeq *$, and let us in addition suppose that $X$ is dualisable in $\HPBmod$. Let us write $X^\vee$ for the dual of $X$, with duality structure given by $\eta : HP_B\bQ \to X \otimes X^\vee$ and $\epsilon : X^\vee \otimes X \to HP_B\bQ$.

Given an endomorphism $f : X \to X$, we may form the endomorphism
$$X^{\otimes n} \overset{X \otimes f^{\otimes n-1}}\lra X^{\otimes n} \overset{e(d_\lambda)}\lra X^{\otimes n}$$
and take the trace over the last $(n-1)$ copies of $X$, i.e.\ apply the construction $\mathrm{Tr}^{X^{\otimes n-1}}(-)$ described in the previous section, to obtain an endomorphism of $X$. This endomorphism is null because the idempotent $e(d_{\lambda}) : X^{\otimes n} \to X^{\otimes n}$ factors through $S_\lambda(X)$ which is contractible by assumption. 

We now translate this into formulas. We have $d_\lambda = \frac{\dim S^\lambda}{n!} \sum_{\sigma \in \Sigma_n} \chi_{\lambda}(\sigma) \cdot \sigma$ so the essential calculation is to describe the endomorphism of $X$ obtained from $\sigma \circ ( X \otimes f^{\otimes n-1}) : X^{\otimes n} \to X^{\otimes n}$ by taking the trace over the last $(n-1)$ copies of $X$. This is a universal construction in idempotent complete $\bQ$-linear tensor categories, and has been worked out by Abramsky \cite[Proposition 3]{Abramsky}. In the notation of that proposition, one takes $A_1=B_1=X$ and $U_2= \cdots=U_n = X$, then $f_1=\mathrm{Id}_X$ and $f_2 = \cdots = f_n = f$, and $\pi = \sigma$. The trace of $\sigma \circ (X \otimes f^{\otimes n-1})$ over the last $(n-1)$ copies of $X$ is then given by
$$\left(\prod_{l \in \mathcal{L}(\sigma)} s_l \right) \cdot (p_{\sigma}^{-1} \circ  g_1).$$
Here $p_{\sigma}^{-1} = \mathrm{Id}_X$, and $g_1$ is given by composing the $f_i$ along the cycle in $\sigma$ starting at 1: as $f_1=\mathrm{Id}_X$, if this cycle is $(1, p_2, \ldots, p_k)$ then this gives $g_1 = f^{\circ k-1}$; $\mathcal{L}(\sigma)$ is the set of cycles in the permutation $\sigma$ which do not contain 1, and for such a cycle $l=(p_1, p_2, \ldots, p_k)$ we have $s_l := \mathrm{Tr}(f_{p_k} \circ \cdots \circ f_{p_1}) = \mathrm{Tr}(f^{\circ k})$.

Applying this discussion to the identity  $0 = \sum_{\sigma \in \Sigma_n} \chi_\lambda(\sigma) \cdot (\sigma \circ (X \otimes f^{\otimes n-1}))$ gives the identity
\begin{equation}\label{eq:trace1}
0 = \sum_{\sigma \in \Sigma_{n}} \chi_\lambda(\sigma) \cdot \mathrm{Tr}(f^{\circ l(\gamma_2)})\cdots \mathrm{Tr}(f^{\circ l(\gamma_{q(\sigma)})}) \cdot f^{\circ l(\gamma_1)-1} \in [X, X]_{\HPBmod}
\end{equation}
where $\sigma = \gamma_1 \cdot \gamma_2 \cdots \gamma_{q(\sigma)}$ is a decomposition into disjoint cycles, with $1$ being in the support of $\gamma_1$, and $l(\gamma_i)$ denotes the length of the cycle $\gamma_i$. 

We originally learnt this idea from the thesis of del Padrone \cite{dP} (see \cite[Proposition 2.2.4]{dP} for a closely related result).

\subsection{Proof of Theorem \ref{mainThm:A} under the first hypothesis}

In this case we will work with the periodic chains $CP$. For each $b \in B$ we have
\begin{align*}
\pi_0(CP_b) &= \bigoplus_{i \in \bZ}H^{-2i}(E_b) \cong \bigoplus_{i \in \bZ} H^{-2i}(W),\\
\pi_1(CP_b) &= \bigoplus_{i \in \bZ} H^{-2i-1}(E_b) \cong \bigoplus_{i \in \bZ} H^{-2i-1}(W) 
\end{align*}
and so by 2-periodicity we have an isomorphism $\pi_*(CP_b) \cong H^{-*}(W) \otimes_\bQ \bQ[t^{\pm 1}]$ of graded $\bQ[t^{\pm 1}]$-modules. Here the right-hand side is to be interpreted as the tensor product of graded $\bQ$-modules, which in degree $k$ is
$$\bigoplus_{-i+2n = k} H^{-i}(W) \otimes \bQ\{t^n\}.$$
Thus we have $\wedge^{\ell}_{\bQ[t^{\pm 1}]} (\pi_*(CP_b)) \cong \wedge^{\ell}_\bQ(H^{-*}(W)) \otimes_\bQ \bQ[t^{\pm 1}]$.

Under hypothesis (H1) the cohomology $H^*(W)$ is concentrated in even degrees, so if it has total degree $k$ then we have
$$\wedge^{k+1}_{\bQ[t^{\pm 1}]} (\pi_*(CP_b)) \cong \wedge^{k+1}_\bQ(H^{-*}(W)) \otimes_\bQ \bQ[t^{\pm 1}]=0$$
and so it follows from Lemma \ref{lem:SchurKills} that $\wedge ^{k+1} CP \simeq *$.  
As $\pi: E \to B$ is a fibre bundle with compact fibres Corollary \ref{cor:CCPDualisable} applies to it, so $CP$ is dualisable in $\HPBmod$ and hence the discussion of the previous section applies. Thus, as $CP$ is a ring object in $\HPBmod$, for any 
$$x \in H^{2p}(E) \subset \bigoplus_{i \in \bZ} H^{-2i}(E) = [HP_B\bQ, CP]_{\HPBmod}$$
multiplication by $x$ yields an endomorphism $\hat{x} : CP \to CP$, and in this case composing the map \eqref{eq:trace1} with $1 \in [HP_B\bQ, CP]_{\HPBmod}$ gives the identity
\begin{equation}\label{eq:trace2}
0 = \sum_{\sigma \in \Sigma_{k+1}} \mathrm{sign}(\sigma) \cdot \mathrm{Tr}(\hat{x}^{\circ l(\gamma_2)})\cdots \mathrm{Tr}(\hat{x}^{\circ l(\gamma_{q(\sigma)})}) \cdot {x}^{\circ l(\gamma_1)-1}
\end{equation}
in $[HP_B\bQ, CP]_{\HPBmod}$, because the partition $\lambda = (1^{k+1})$ corresponds to the sign representation.

\begin{corollary}\label{cor:EvRelations}
The polynomial
$$\rho_x(z) := \frac{(-1)^k}{k!}\sum_{\sigma \in \Sigma_{k+1}} \mathrm{sign}(\sigma) \cdot \trf^*_\pi({x}^{l(\gamma_2)})\cdots \trf^*_\pi(x^{l(\gamma_{q(\sigma)})}) \cdot z^{l(\gamma_1)-1} \in H^{2*}(B)[z]$$
is monic of degree $k$ and satisfies $\rho_x(x)=0 \in H^{2*}(E)$.
\end{corollary}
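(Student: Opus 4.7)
The plan is to evaluate the abstract trace identity \eqref{eq:trace2} at the unit $1 \in [\mathbbm{1}, C]$ of the ring object $C$, and then match the resulting expression term-by-term with $\rho_x(x)$. Note that identity \eqref{eq:trace2} is already in hand: it was derived under the assumptions $\wedge^{k+1}C\simeq *$ (established from Lemma \ref{lem:SchurKills} using $\dim_\bQ H^*(W) = k$) and dualisability of $C$ (Lemma \ref{lem:selfdual}). So the task is just to unpack its meaning.

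Since $C$ is a ring object in $\mathcal{C}$ and $\hat{x}:C \to \Sigma^p C$ is multiplication by $x \in H^p(E) = [\mathbbm{1},\Sigma^p C]$, precomposing with the unit $1:\mathbbm{1} \to C$ gives $\hat{x}^{\circ m}\circ 1 = x^m$. The identity \eqref{eq:trace2} therefore becomes
$$0 = \sum_{\sigma \in \Sigma_{k+1}} \mathrm{sign}(\sigma) \cdot \mathrm{Tr}(\hat{x}^{\circ l(\gamma_2)})\cdots \mathrm{Tr}(\hat{x}^{\circ l(\gamma_{q(\sigma)})}) \cdot x^{l(\gamma_1)-1} \in H^{pk}(E).$$
It remains to identify each categorical trace $\mathrm{Tr}(\hat{x}^{\circ m}) \in [\mathbbm{1},\Sigma^{pm}\mathbbm{1}] = H^{pm}(B)$ with the Becker--Gottlieb transfer $\trf_\pi^*(x^m)$. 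Once this identification is made, the displayed relation coincides with $k! \cdot (-1)^k \cdot \rho_x(x) = 0$, which forces $\rho_x(x) = 0$.

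Monicity is a combinatorial verification. The coefficient of $z^k$ in $\rho_x(z)$ picks out those $\sigma \in \Sigma_{k+1}$ with $l(\gamma_1) = k+1$, i.e.\ the single $(k+1)$-cycles. There are $k!$ such $\sigma$, each of sign $(-1)^k$, and for each the product of trace factors is empty (hence equal to $1$). The coefficient of $z^k$ is therefore
$$\frac{(-1)^k}{k!} \cdot k! \cdot (-1)^k \cdot 1 = 1,$$
as required.

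The main obstacle is the identification $\mathrm{Tr}(\hat{x}^{\circ m}) = \trf_\pi^*(x^m)$ in the parametrised stable category $\mathcal{C}$. Morally this is the standard fact that the Becker--Gottlieb transfer is the categorical trace on a dualisable function spectrum, as alluded to in the outline via \cite{DoldPuppe}, but one must be careful that the Poincar\'e duality twist $D(C) \simeq \Sigma^{2n}C$ from Lemma \ref{lem:selfdual} is the source of the Euler class factor in the formula $\trf_\pi^*(-) = \pi_!(e(T_\pi E)\cdot -)$. Once that identification is justified, the rest of the argument is formal symbol manipulation and the sign count above.
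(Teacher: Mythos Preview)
Your proposal is correct and follows essentially the same approach as the paper: precompose the trace identity \eqref{eq:trace2} with the unit $\iota:\mathbbm{1}\to C$, invoke the trace description of the Becker--Gottlieb transfer to identify $\mathrm{Tr}(\hat{x}^{\circ m})$ with $\trf_\pi^*(x^m)$, and then count the $k!$ many $(k{+}1)$-cycles (each of sign $(-1)^k$) to verify monicity. Your added remark about the Poincar\'e duality twist accounting for the Euler class in $\trf_\pi^* = \pi_!(e(T_\pi E)\cdot -)$ is a helpful gloss that the paper leaves implicit.
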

\begin{proof}
Let $E^\vee$ be a dual of $E \in \mathsf{Ho}(\mathsf{Sp}_{/B})$, so $CP^\vee := F_B(E^\vee, HP_B\bQ)$ is a dual in $\HPBmod$ of $CP$. The Becker--Gottlieb transfer $\trf^*_\pi$ is the map on cohomology induced by the composition
$$S^0_B \overset{\eta}\lra E \wedge_B E^\vee \cong E^\vee \wedge_B E \overset{E^\vee \wedge_B \Delta}\lra E^\vee \wedge_B  E \wedge_B E \overset{\epsilon \wedge_B E}\lra  E.$$
Applying $F_B(-, HP_B\bQ)$, this is
$$CP \overset{\eta \wedge CP}\lra CP^\vee \otimes CP \otimes CP \overset{CP^\vee \otimes \mu}\lra CP^\vee \otimes CP \cong CP \otimes CP^\vee \overset{\epsilon}\lra HP_B\bQ.$$

If $t \in \bigoplus_{i \in \bZ} H^{-2i}(E) = [HP_B\bQ, CP]_{\HPBmod}$ then composing the previous map with $t$ gives the class $\trf_\pi^*(t) \in \bigoplus_{i \in \bZ} H^{-2i}(B) = [HP_B\bQ, HP_B\bQ]_{\HPBmod}$. The commutative diagram
\begin{equation*}
\xymatrix{
HP_B\bQ \ar[r]^-{t} \ar[d]^-{\eta}& CP \ar[r]^-{\eta \wedge CP}& CP^\vee \otimes CP \otimes CP \ar[r]^-{CP^\vee \otimes \mu}& CP^\vee \otimes CP \ar[r]^\cong & CP \otimes CP^\vee\ar[d]^-{\epsilon}\\
CP^\vee \otimes CP \ar[rrru]_-{CP^\vee \otimes \hat{t}} & & & & HP_B\bQ
}
\end{equation*}
shows that $\trf_\pi^*(t)$ is the trace of $\hat{t}^\vee : CP^\vee \to CP^\vee$, which is the same as the trace of $\hat{t} : CP \to CP$: thus 
$$\trf^*_\pi(t) = \mathrm{Tr}(\hat{t}) \in [HP_B\bQ, HP_B\bQ]_{\HPBmod} = \bigoplus_{i \in \bZ} H^{-2i}(B).$$

In particular we have $\mathrm{Tr}(\hat{x}^{\circ i}) = \trf^*_\pi(x^i)$. Substituting this into \eqref{eq:trace2} therefore shows that
$$\sum_{\sigma \in \Sigma_{k+1}} \mathrm{sign}(\sigma) \cdot \trf^*_\pi({x}^{l(\gamma_2)})\cdots \trf^*_\pi(x^{l(\gamma_{q(\sigma)})}) \cdot x^{l(\gamma_1)-1}=0$$
in $H^{2pk}(E) \subset [HP_B\bQ, CP]_{\HPBmod}$. The coefficient of $x^{k}$ is the sum over the $k!$-many $(k+1)$-cycles $\sigma \in \Sigma_{k+1}$ of $\mathrm{sign}(\sigma) = (-1)^k$, which is $k! (-1)^k$. Thus after dividing by this coefficient we see that $\rho_x(x)=0$ as required.
\end{proof}

Applying this to the universal fibre bundle $p : B\Diff^+(W,*) \to B\Diff^+(W)$ and the cohomology class $s^*c$, and using the identity $\trf_p^*((s^*c)^i) = \kappa_{c^i e}(p)$ from \eqref{eq:TrfEuler}, one obtains a monic polynomial $\rho_c(z) \in R^*(W)[z]$ such that  $\rho_c(s^*c) =0 \in R^*(W,*)$ %is zero when evaluated on any smooth oriented fibre bundle with section over a finite CW-complex $B$. As a rational cohomology class is zero if and only if it vanishes when evaluated against every rational homology class, and a homology class may always be supported on a finite CW-complex, it follows that $\rho_c(s^*c)=0 \in R^*(W,*)$ 
and hence that $R^*(W,*)$ is integral over $R^*(W)$. Theorem \ref{mainThm:A} under hypothesis (H1) follows by applying Proposition \ref{prop:Integral}.

\subsection{Proof of Theorem \ref{mainThm:A} under the second hypothesis}

In this case we will work with the non-periodic chains $C$. We shall first prove the following generalisation of a theorem of Grigoriev \cite{grigoriev-relations}.

\begin{theorem}\label{thm:Grigoriev}
Let $W$ be a manifold of dimension $2n$ having rational cohomology only in degrees 0, $2n$, and odd degrees, and let $d := \dim_\bQ H^{odd}(W)$. Let $\pi : E \to B$ be a smooth oriented fibre bundle with fibre $W$. Let $a, b \in H^*(E)$ satisfy $\pi_!(a)=\pi_!(b)=0$, and $a$ have even degree. Then
$$\pi_!(a^2)^{\lceil \tfrac{d+1}{2}\rceil}=0 \quad \text{and}\quad \pi_!(ab)^{d+1}=0.$$
\end{theorem}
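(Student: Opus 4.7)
I plan to adapt the Schur-functor technique of Section \ref{sec:TraceId}, replacing $\wedge^{k+1}$ with $\mathrm{Sym}^{d+1}$ applied to a reduced parametrised $H\bQ$-module spectrum capturing only the odd cohomology of $W$. Under hypothesis (H2), $H^*(W) = \bQ\{1\} \oplus V^{\mathrm{odd}} \oplus \bQ\{\omega\}$ where $V^{\mathrm{odd}}$ is purely odd of total dimension $d$; the key algebraic input is that $\mathrm{Sym}^m(V^{\mathrm{odd}}) = 0$ for every $m \geq d+1$, since for a purely odd graded vector space the Koszul-graded symmetric power coincides with the ungraded exterior power. If I can construct $\bar C \in \mathcal{C}$ with fibres equivalent to $V^{\mathrm{odd}}$, then Lemma \ref{lem:SchurKills} gives $\mathrm{Sym}^m(\bar C) \simeq *$ for every $m \geq d+1$.

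To build $\bar C$, I would start from $C = F(\Sigma^\infty_B E, H\bQ)$ and strip off its even-degree summands. The unit $\eta : \mathbbm{1} \to C$ encodes $1 \in H^0(W)$, and its dual $\epsilon : C \to \Sigma^{-2n}\mathbbm{1}$ (fibrewise integration, defined using the orientation) encodes the top class $\omega$; since $\epsilon \circ \eta$ is null fibrewise, setting $C_0 := \mathrm{cofib}(\eta)$ allows $\epsilon$ to descend to $\epsilon_0 : C_0 \to \Sigma^{-2n}\mathbbm{1}$, and I define $\bar C := \mathrm{fib}(\epsilon_0)$. This $\bar C$ is dualisable (as a fibre of dualisable objects in a stable category) with fibres equivalent to $V^{\mathrm{odd}}$. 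The hypotheses $\pi_!(a) = \pi_!(b) = 0$ are precisely the conditions for $a, b \in [\mathbbm{1}, \Sigma^* C]$ to lift through $\bar C$, producing classes $\tilde a : \mathbbm{1} \to \Sigma^{|a|}\bar C$ and $\tilde b : \mathbbm{1} \to \Sigma^{|b|}\bar C$. Moreover the self-duality of $C$ given by Lemma \ref{lem:selfdual} --- concretely the pairing $\mu(c_1, c_2) = \pi_!(c_1 \cdot c_2)$ --- restricts to a non-degenerate pairing $\bar C \otimes \bar C \to \Sigma^{-2n}\mathbbm{1}$ whose contractions identify $\tilde a \otimes \tilde a \mapsto \pi_!(a^2)$ and $\tilde a \otimes \tilde b \mapsto \pi_!(ab)$.

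Since $|a|$ is even, $\tilde a$ has even Koszul-degree, so $\tilde a^{\otimes m}$ is $\Sigma_m$-invariant with trivial Koszul sign and hence lies in $\mathrm{Sym}^m(\bar C)$. For $m = 2\lceil (d+1)/2 \rceil \geq d+1$ this is null, so $\tilde a^{\otimes m} = 0$; applying the pairing to $\lceil (d+1)/2 \rceil$ successive pairs of tensor factors yields $\pi_!(a^2)^{\lceil (d+1)/2 \rceil} = 0$. Analogously, $\tilde a^{\otimes (d+1)} = 0$, and tensoring with $\tilde b^{\otimes (d+1)}$ and pairing each $\tilde a$-factor against a $\tilde b$-factor gives $\pi_!(ab)^{d+1} = 0$. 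The main obstacle is the rigorous construction of $\bar C$ and the restricted pairing --- in particular verifying that contractions recover $\pi_!$-expressions \emph{without} the Euler-class factor that appears on $C$ (precisely the contributions stripped off in passing to $\bar C$) --- together with careful Koszul-sign bookkeeping throughout. Conceptually this construction encodes, in a framework that avoids Leray--Hirsch, the pigeonhole identity ``a product of more than $d$ odd-degree classes in a commutative graded algebra vanishes'', which is the shape of the proof in the Leray--Hirsch case.
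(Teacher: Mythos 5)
Your proposal is correct and is essentially the paper's own proof: your $\bar C$ is the paper's object $D$ (you take the cofibre of the unit first and the fibre of $\pi_!$ second, the paper does the reverse, and these agree in a stable category), the key input $\mathrm{Sym}^{d+1}(\bar C)\simeq *$ via Lemma \ref{lem:SchurKills} is identical, and the factorisation of $\tilde a^{\otimes m}$ through $\mathrm{Sym}^m(\bar C)$ for $|a|$ even is the same argument. The one obstacle you flag --- a possible Euler-class factor in the contraction --- does not arise, because the pairing is defined directly as the descent of $\pi_!\circ\mu : C\otimes C\to \Sigma^{2n}\mathbbm{1}$ to $D\otimes D$ rather than via the categorical trace (the Euler class enters only the Becker--Gottlieb transfer, which is not used here); also note that the nullhomotopy of $\pi_!\circ\iota$ needed for the lift holds globally, not merely fibrewise, since $[\mathbbm{1},\Sigma^{-2n}\mathbbm{1}]=H^{-2n}(B)=0$.
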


To begin with, we prove the following extension of Corollary \ref{cor:CCPDualisable}, which is the appropriate form of Poincar{\'e} duality in our setting.

\begin{lemma}\label{lem:selfdual}
If $\pi: E \to B$ is a Hurewicz fibration over a CW-complex, its fibre $F$ has the homotopy type of a finite Poincar{\'e} complex of dimension $n$, and $\pi_1(B)$ acts trivially on $H^n(F)$, then an orientation of $F$ determines an identification of the dual of $C$ with $\Sigma^n C$ in $\HBmod$.
\end{lemma}
\begin{proof}
This is proved in Section 3.1 of \cite{HLLRW} in dual form, where, passing to rational coefficients, the equivalence is expressed as $D^{fw}_E : \Sigma^n F_B(E, H_B \bQ) \overset{\sim}\to E \wedge_B H_B\bQ$. The domain of this morphism is $\Sigma^n C$ and as $C = F_B(E, H_B\bQ) = F_{\HBmod}(E \wedge_B H_B\bQ, H_B\bQ)$ we recognise $E \wedge_B H_B\bQ$ as the dual of $C$.
\end{proof}

%\begin{remark}
%See \cite[Section 3.1]{HLLRW} for an alternative proof of this lemma, using more parameterised stable homotopy theory, which avoids the assumption that $B$ is finite.
%\end{remark}

We now consider a smooth oriented fibre bundle $\pi : E \to B$ as in the statement of Theorem \ref{thm:Grigoriev}, with $B$ a CW-complex; this satisfies the hypotheses of the previous lemma. Fibre integration $\pi_! : H^*(E) \to H^{*-2n}(B)$ is realised in the category $\HBmod$ by the morphism $\pi_! : C \to \Sigma^{-2n} H_B\bQ$ dual to the unit $\iota : H_B\bQ \to C$, using the self-duality of $C$ described in Lemma \ref{lem:selfdual}. We may thus define an $H_B\bQ$-module $D'$ by the homotopy fibre sequence
$$D' \lra C \overset{\pi_!}\lra \Sigma^{-2n} H_B\bQ.$$
The composition $H_B\bQ \overset{\iota}\to C \overset{\pi_!}\to \Sigma^{-2n} H_B\bQ$ is null, as it represents the class
$$\pi_!(1) = 0 \in H^{-2n}(B) = [H_B\bQ, \Sigma^{-2n} H_B\bQ]_{\HBmod},$$
so $\iota$ lifts to a map $\iota' : H_B\bQ \to D'$ and we can define an $H_B\bQ$-module $D$ by the homotopy cofibre sequence
$$H_B\bQ \overset{\iota'}\lra D' \lra D.$$

\begin{lemma}
If $W$ only has rational cohomology in degree 0, $2n$, and odd degrees, and $d := \dim_\bQ H^{odd}(W)$ then $\mathrm{Sym}^{d+1}(D) \simeq *$.
\end{lemma}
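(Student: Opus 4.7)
The plan is to compute the stalk $\pi_*(D_b)$ explicitly by running the two defining sequences of $D$ on homotopy, identify it abstractly with $H^{odd}(W)$, and then invoke Lemma \ref{lem:SchurKills}.

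First I would analyse the fibre sequence $D' \to C \to \Sigma^{-2n}\mathbbm{1}$ fibrewise. Since $\pi_k(C_b) = H^{-k}(W)$ and, from the construction of $\pi_!$ as the dual of $\iota$ under the self-duality of Lemma \ref{lem:selfdual}, the induced map on $\pi_*$ is fibre integration $H^*(W) \to \bQ$ (placed in cohomological degree $2n$), which is an isomorphism in degree $2n$ because $W$ is closed and oriented. The resulting long exact sequence identifies $\pi_k(D'_b)$ with $H^{-k}(W)$ in all degrees except $k=-2n$ and $k=-2n-1$, where a short check gives $\pi_{-2n}(D'_b) = 0 = \pi_{-2n-1}(D'_b)$. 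Thus $\pi_*(D'_b)$ is $H^{-*}(W)$ with its top cohomology class killed.

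Next I would feed this into the cofibre sequence $\mathbbm{1} \to D' \to D$. The lifted unit $\iota' : \mathbbm{1} \to D'$ induces on $\pi_0$ the map $\bQ \to H^0(W) = \bQ$ sending $1$ to $1$, which is an isomorphism (taking $W$ connected, which may be assumed throughout). The associated long exact sequence gives $\pi_k(D_b) \cong \pi_k(D'_b) = H^{-k}(W)$ for $k \notin \{0,-2n\}$, together with $\pi_0(D_b) = 0 = \pi_{-2n}(D_b)$. Under the standing hypothesis that $H^*(W)$ is concentrated in degrees $0$, $2n$ and odd, the surviving piece is exactly $H^{odd}(W)$, which establishes the first claim.

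Finally, in the graded $\bQ$-linear symmetric monoidal category $\mathcal{V}$ of graded rational vector spaces, $\mathrm{Sym}^n$ is formed using the Koszul symmetry, so on a graded vector space $V_*$ concentrated in odd degrees one has $\mathrm{Sym}^n(V_*) = \wedge^n V_*$, where the exterior power is taken on the underlying ungraded vector space. Since $\dim_\bQ \pi_*(D_b) = d$, the $(d{+}1)$st exterior power vanishes, so $\mathrm{Sym}^{d+1}(\pi_*(D_b)) = 0$ in $\mathcal{V}$, and Lemma \ref{lem:SchurKills} then delivers $\mathrm{Sym}^{d+1}(D) \simeq *$. No step should be genuinely hard; the only real subtlety is keeping the degree bookkeeping in the two long exact sequences aligned so that the cancellations in degrees $0$ and $-2n$ are visible, and remembering that the Koszul sign rule converts graded $\mathrm{Sym}$ of an odd-concentrated space into an exterior power.
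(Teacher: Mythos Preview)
Your proposal is correct and follows essentially the same approach as the paper: compute $\pi_*(D'_b)$ from the fibre sequence using that $\pi_!$ is surjective in the relevant degree, then compute $\pi_*(D_b)$ from the cofibre sequence using that $\iota'$ is injective in degree $0$, and conclude by Lemma~\ref{lem:SchurKills}. You give slightly more detail than the paper (the explicit Koszul-sign remark explaining why $\mathrm{Sym}^{d+1}$ on an odd-concentrated graded vector space vanishes, and the connectedness of $W$), but there is no substantive difference in method.
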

\begin{proof}
The $H\bQ$-module spectrum $D_b$ is obtained by forming the homotopy fibre sequence $D'_b \to F(W, H\bQ) \overset{\pi_!}\to \Sigma^{-2n} H\bQ$ and then the homotopy cofibre sequence $H\bQ \overset{\iota'_b}\to D'_b \to D_b$.
Now $\pi_*(F(W, H\bQ)) = H^{-*}(W)$, and the map 
$$(\pi_!)_* :\pi_{*}(F(W, H\bQ)) = H^{-*}(W) \lra \pi_{*}(\Sigma^{-2n} H\bQ)$$
realises capping with the fundamental class so is surjective. By the associated long exact sequence we have
\begin{align*}
\pi_0(D'_b) &= \bQ\{1\}\\
\pi_{odd}(D'_b) &= H^{-odd}(W)
\end{align*}
and the remaining even homotopy groups are zero. Now the map
$$(\iota'_b)_* : \bQ = \pi_0(H\bQ) \lra \pi_0(D'_b)$$
realises the unit so is an isomorphism, and it follows that $\pi_{odd}(D_b) = H^{-odd}(W)$ and the even homotopy groups of $D_b$ vanish. As the $(d+1)$-st symmetric power of the graded $\bQ$-module $H^{-odd}(W)$ vanishes,  the rest follows from Lemma \ref{lem:SchurKills}.
\end{proof}

The map
$$D' \otimes D' \lra C \otimes C \overset{\mu}\lra C \overset{\pi_!}\lra \Sigma^{-2n} H_B\bQ$$
is null when precomposed with $D' \otimes H_B\bQ \overset{D' \otimes \iota'}\to D' \otimes D'$ or $H_B\bQ \otimes D' \overset{\iota' \otimes D'}\to D' \otimes D'$, so taking homotopy cofibres of these two maps gives a morphism
$$\phi : D \otimes D \lra \Sigma^{-2n} H_B\bQ.$$

\begin{proof}[Proof of Theorem \ref{thm:Grigoriev}]
If $a \in H^{-k}(E) = [\Sigma^k H_B\bQ, C]_{\HBmod}$ is such that $\pi_!(a)=0$ then it lifts to a map to $D'$ and hence determines a map $\bar{a}: \Sigma^k H_B\bQ \to D$. Similarly if $b \in H^{-\ell}(E)$ satisfies $\pi_!(b)=0$ then it gives a $\bar{b} : \Sigma^\ell H_B\bQ \to D$. The class $\pi_!(a \cdot b)$ may therefore be represented by
$$\Sigma^k H_B\bQ \otimes \Sigma^\ell H_B\bQ \overset{\bar{a} \otimes \bar{b}}\lra D \otimes D \overset{\phi}\lra \Sigma^{-2n} H_B\bQ.$$
Hence the class $\pi_!(a \cdot b)^N$ may be written as
$$(\Sigma^k H_B\bQ)^{\otimes N} \otimes (\Sigma^\ell H_B\bQ)^{\otimes N} \overset{\bar{a}^{N} \otimes \bar{b}^N}\lra D^{\otimes N} \otimes D^{\otimes N} \overset{\phi^N}\lra (\Sigma^{-2n} H_B\bQ)^{\otimes N},$$
as the degree $k$ of $\bar{a}$ is even so no sign is incurred in rearranging the factors. As $k$ is even, the map $\bar{a}^N : (\Sigma^k H_B\bQ)^{\otimes N} \to D^{\otimes N}$ factors through $\mathrm{Sym}^N(D)$ which is contractible as long as $N \geq d+1$. Hence $\pi_!(a \cdot b)^{d+1}=0$.

Similarly, if $a=b$ then we can choose $\bar{b} = \bar{a} : \Sigma^k H_B\bQ \to D$ in which case the map $\bar{a}^N \otimes \bar{a}^N : (\Sigma^k H_B\bQ)^{\otimes N} \otimes (\Sigma^k H_B\bQ)^{\otimes N} \to D^{\otimes N} \otimes D^{\otimes N}$ factors through $\mathrm{Sym}^{2N}(D)$, which is contractible as long as $2N \geq d+1$. Hence $\pi_!(a^2)^{\lceil \tfrac{d+1}{2}\rceil}=0$.
%This proves the theorem when $B$ is a finite CW-complex; it follows for general $B$ as a rational cohomology class vanishes if and only if it pulls back to zero under any map from a finite CW-complex.
\end{proof}

Now that we have Theorem \ref{thm:Grigoriev}, the entirety of Section 5 of \cite{grigoriev-relations} goes through with only notational changes, as this only uses the statement of Grigoriev's theorem. In particular, for $p \in H^{*}(BSO(2n))$ of even degree and $\chi = \chi(W) \neq 0$, the analogue of \cite[Example 5.19]{grigoriev-relations} gives the relation
\begin{equation*}
\left(p - \frac{\kappa_{ep}}{\chi} - \frac{e \kappa_p}{\chi} + \frac{\kappa_{e^2} \kappa_p}{\chi^2}\right)^{d+1}=0 \in R^*(W,*).
\end{equation*}
From this it is clear that $R^*(W,*)$ is a finite $R^*(W)$-module, as the monomials in $\bQ[p_1, p_2, \ldots, p_{n-1}, e]$ where no variable occurs with exponent larger than $d$ give a finite set of module generators. Thus $R^*(W,*)$ is integral over $R^*(W)$, so by Proposition \ref{prop:Integral} the algebra $R^*(W)$ is finitely-generated. This proves Theorem \ref{mainThm:A} under hypothesis (H2).

\subsection{Tautological relations}

Under either hypothesis we have established more than Theorem \ref{mainThm:A}, as we have produced explicit relations in $R^*(W,*)$. Under hypothesis (H2) these relations are equal to those obtained by Grigoriev, and under hypothesis (H1) they are given by Corollary \ref{cor:EvRelations} as
$$0 = \sum_{\sigma \in \Sigma_{k+1}} \mathrm{sign}(\sigma) \cdot \kappa_{e{c}^{l(\gamma_2)}}\cdots \kappa_{ec^{l(\gamma_{q(\sigma)})}} \cdot c^{l(\gamma_1)-1} \in R^*(W,*)$$
for each $c \in H^*(BSO(2n))$, where $k = \dim_\bQ H^*(W)$. These may of course be pushed forward to obtain relations in $R^*(W)$.

More generally, the trace identity technique of Section \ref{sec:TraceId} may be used to find relations among tautological classes for \emph{any} manifold. Recall that given a fibre bundle $W \to E \overset{\pi}\to B$ we have formed an associated object $CP \in \HPBmod$. Let us write $d_{ev} = \dim_\bQ H^{ev}(W)$ and $d_{odd} = \dim_\bQ H^{odd}(W)$. The first ingredient is the following consequence of a calculation of Deligne.

\begin{lemma}
If $\lambda$ is a partition whose Young diagram contains the rectangle $(d_{ev}+1) \times (d_{odd}+1)$, then $S_\lambda(CP) \simeq *$.
\end{lemma}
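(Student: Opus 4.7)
By Lemma \ref{lem:SchurKills}, it suffices to show that $S_\lambda(\pi_*(C_b)) = 0$ in the category $\mathcal{V}$ of graded rational vector spaces for each $b \in B$. Since $\pi_*(C_b) \cong H^{-*}(E_b) \cong H^{-*}(W)$ as graded vector spaces, this reduces the question to a purely algebraic statement about the graded vector space $V_* := H^{-*}(W)$, which has even part of dimension $d_{ev}$ and odd part of dimension $d_{odd}$.

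The plan is to invoke the classical computation (due to Berele--Regev, also reformulated by Deligne) of vanishing of Schur functors on super vector spaces. Because the symmetric monoidal structure on $\mathcal{V}$ incorporates the Koszul sign rule, the Schur functor $S_\lambda$ applied to the $\bZ$-graded space $V_*$ coincides with the super-Schur functor applied to the underlying $\bZ/2$-graded space of super-dimension $(d_{ev}|d_{odd})$. The relevant result states that $S_\lambda$ evaluated on such a super vector space is nonzero if and only if $\lambda$ fits inside the $(d_{ev}, d_{odd})$-hook, i.e.\ $\lambda_{d_{ev}+1} \leq d_{odd}$. Equivalently, $S_\lambda(V_*) = 0$ precisely when the Young diagram of $\lambda$ contains the rectangle with $d_{ev}+1$ rows each of length $d_{odd}+1$, which is exactly our hypothesis.

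I would therefore first state (and cite) the Berele--Regev/Deligne vanishing criterion for super-Schur functors, then carefully identify $S_\lambda$ in $\mathcal{V}$ with its super-analog via the Koszul-sign symmetric monoidal structure, and finally combine this with Lemma \ref{lem:SchurKills} to conclude $S_\lambda(C) \simeq *$. The only mildly delicate point — which I would expect to be the main obstacle if one were to write this out from scratch — is verifying that the idempotent $e(d_\lambda)$ constructed from the symmetric group action in $\mathcal{V}$ really does agree with the super-idempotent of Berele--Regev, but this is immediate once one unpacks that the braiding on graded vector spaces is the super-braiding.
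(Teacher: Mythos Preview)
Your proof is correct and follows essentially the same route as the paper: reduce via Lemma~\ref{lem:SchurKills} to the fibrewise statement $S_\lambda(H^{-*}(W))=0$, then invoke the Deligne/Berele--Regev vanishing criterion for Schur functors on super vector spaces of superdimension $(d_{ev}\,|\,d_{odd})$. The paper's version is terser (it simply cites \cite[Corollary 1.9]{Deligne}), whereas you spell out the passage from $\bZ$-graded to $\bZ/2$-graded vector spaces via the Koszul braiding, but there is no substantive difference.
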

\begin{proof}
By Lemma \ref{lem:SchurKills} it is enough to verify that $S_{\lambda}(\pi_*(CP_b))=0$ for all $b \in B$. But we have shown that $\pi_*(CP_b) \cong H^{-*}(W)\otimes \bQ[t^{\pm 1}]$ as graded $\bQ[t^{\pm 1}]$-modules so $S_{\lambda}(\pi_*(CP_b))$ vanishes if $S_\lambda(H^{-*}(W))$ does, where the latter Schur functor is taken in $\mathsf{V}_\bQ$. By \cite[Corollary 1.9]{Deligne} a $(\bZ/2$-)graded vector space is annihilated by $S_\lambda(-)$ under the given assumption on its (super)dimension.
\end{proof}

In particular, for a given manifold $W$ we may take $\lambda$ to be the partition of $n = (d_{ev}+1) \cdot (d_{odd}+1)$ with Young diagram equal to the rectangle $(d_{ev}+1) \times (d_{odd}+1)$, so that we have $S_\lambda(CP) \simeq *$ and hence by \eqref{eq:trace1} we have the relation
\begin{equation*}
0 = \sum_{\sigma \in \Sigma_{n}} \chi_\lambda(\sigma) \cdot \kappa_{ec^{ l(\gamma_2)}}\cdots \kappa_{ec^{ l(\gamma_{q(\sigma)})}} \cdot c^{l(\gamma_1)-1} \in R^*(W,*).
\end{equation*}

It is a simple exercise with the Murnaghan--Nakayama rule to show that the character $\chi_\lambda$ vanishes on all $n$-cycles if both $d_{ev} > 0$ and $d_{odd}>0$. As $d_{ev}$ cannot be zero, because $H^0(W) \neq 0$, it follows that this relation is a monic polynomial in $c$ (after perhaps scaling by a rational number) if and only if $d_{odd}=0$. (This accounts for why we restricted to manifolds with only even rational cohomology in the first case of Theorem \ref{mainThm:A}.)

\section{Torus actions}

In this section we suppose that we have a smooth action of the torus $T = (S^1)^k$ on a $d$-dimensional orientable manifold $W$. We write $W^T$ for the fixed set of this action. The Borel construction gives a smooth fibre bundle
\begin{equation}\label{eq:Borel}
W \lra W \hcoker T \overset{\pi}\lra BT,
\end{equation}
and the action of $T$ on the tangent bundle $TW \to W$ gives a vector bundle $T^TW := TW \hcoker T \to W \hcoker T$, which is the vertical tangent bundle of the smooth fibre bundle $\pi$. Following the usual notation of equivariant cohomology we write 
$$H^*_T = H^*(BT;\bQ) = \bQ[x_1, x_2, \ldots, x_k] \quad\text{ and }\quad H^*_T(W) = H^*(W \hcoker T ;\bQ).$$ 
As \eqref{eq:Borel} is a smooth fibre bundle, there is a ring homomorphism $\rho: R^*(W) \to H^*_T$, and we denote by $R^*_T \leq H^*_T$ its image. Pulling back $\pi$ along itself gives a smooth fibre bundle over $W \hcoker T$ with canonical section, and so a ring homomorphism $\rho_* : R^*(W, *) \to H^*_T(W)$, and we denote by $R^*_T(*) \leq H^*_T(W)$ its image.

Our goal in this section is to describe conditions on the manifold $W$ and the action of $T$ on $W$ which allow us to estimate the Krull dimension of $R^*(W)$ as $\Kdim(R^*(W)) \geq k$. We will regularly use the following standard piece of commutative algebra: when one ring is integral over another they have the same Krull dimension, by the ``going up" and ``going down" theorems \cite[Ch.\ 5]{AM}. Our most general result is as follows.

\begin{theorem}\label{thm:main}
Let $T$ act smoothly and effectively on a connected closed orientable manifold $W$. Let $V_1, V_2, \ldots, V_p$ be an enumeration of the $T$-representations arising as normal spaces to points on $W^T$, and let $B_i$ denote the Euler characteristic of the subspace of $W^T$ consisting of those path components having normal representation $V_i$. 

If some $y_i \in \bQ[y_1, y_2, \ldots, y_p]$ is integral over the subring generated by
$$\sum_{i=1}^p B_i y_i^n, \quad n=1,2,3,\ldots,$$
then $H^*_T$ is integral over $R^*_T$. In particular $\Kdim(R^*(W)) \geq k$.
\end{theorem}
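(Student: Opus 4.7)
The plan is to reduce $\Kdim(R^*(W)) \geq k$ to a statement about the image $R^*_T \subseteq H^*_T$. Since $\rho : R^*(W) \twoheadrightarrow R^*_T$ is surjective and $R^*_T$ is an integral domain inside the polynomial ring $H^*_T = \bQ[x_1, \ldots, x_k]$ of Krull dimension $k$, it suffices to show $\Kdim(R^*_T) \geq k$; equivalently, that $H^*_T$ is algebraic over $R^*_T$.

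My main tool will be equivariant localisation applied to the Borel fibration $\pi \colon W \hcoker T \to BT$. Using the Becker--Gottlieb identity $\trf^*_\pi(\alpha) = \pi_!(e(T_\pi) \cdot \alpha)$ together with Atiyah--Bott localisation, I get that for any characteristic class $c$ which is a polynomial in the Pontrjagin classes (no Euler factor),
$$\rho(\kappa_{e \cdot c}) = \trf^*_\pi(c(T_\pi)) = \sum_{F \subseteq W^T} \chi(F)\, c(V_{i(F)}) = \sum_{i=1}^{p} B_i\, c(V_i).$$
The point is that restricting $c(T_\pi)$ to a fixed component $F$ gives $c(TF \oplus N_F)$, whose $H^0(F) \otimes H^*_T$ component is just $c(V_{i(F)})$ (because pure Pontrjagin classes of $TF$ lie in positive $F$-degree when $\dim F > 0$); multiplying by $e(TF)$ and fibre-integrating then produces the factor $\chi(F)$. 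The first step is therefore, for each $n \geq 1$, to choose a polynomial $c_n$ in the Pontrjagin classes with $c_n(V_i) = y_i^n$: for even $n$ this is a suitable power of the top Pontrjagin class, using $p_{m_i}(V_i) = y_i^2$ for $V_i$ of complex dimension $m_i$; for odd $n$ a more delicate combination is needed, involving Euler-twisted classes $\kappa_{e^2 \cdot d}$ (which capture the isolated contributions exactly) together with a compensating correction on positive-dimensional components. The outcome is that $\sigma_n = \sum_i B_i y_i^n$ lies in $R^*_T$ for every $n \geq 1$.

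With this in hand the hypothesis applies purely formally: the monic relation for $y_{i_0}$ over $\bQ[\sigma_1, \sigma_2, \ldots] \subset \bQ[y_1, \ldots, y_p]$ pushes forward along the ring map $y_i \mapsto e^T(V_i) \in H^*_T$ to a monic identity in $H^*_T$, exhibiting $e^T(V_{i_0})$ as integral over $R^*_T$. To promote this single-element integrality to the integrality of all of $H^*_T$ over $R^*_T$, I would feed more Pontrjagin polynomials $c$ into the localisation formula above; effectivity of the $T$-action forces the weights of the $V_i$'s to span $H^2(BT; \bQ)$, and the resulting symmetric-function combinations of these weights in $R^*_T$ should make every $x_j$ algebraic over $R^*_T$. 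Hence $\Kdim(R^*_T) = k$ and $\Kdim(R^*(W)) \geq k$.

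The hardest step, I expect, is the construction of $\sigma_n$ as a tautological class for odd $n$ when $W^T$ has positive-dimensional fixed components with non-vanishing Euler characteristic. In that regime pure Pontrjagin classes $c$ only give polynomials in the $(\lambda_j^{(i)})^2$ and hence only even powers of the $y_i$, whereas Euler-twisted classes lose the positive-dimensional contributions by the degree argument above. Some generating-function or inductive manoeuvre relating the various $\kappa_{e^{m} \cdot c}$ -- in the spirit of the trace-identity calculations of Section~\ref{sec:fg} -- appears necessary to bridge the gap, after which the remaining arguments are a formal transfer of integrality from $\bQ[y_1, \ldots, y_p]$ to $H^*_T$ and standard Krull-dimension bookkeeping.
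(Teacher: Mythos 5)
Your reduction to showing that $H^*_T$ is integral over $R^*_T$, and your localisation formula $\rho(\kappa_{ec})=\sum_i B_i\, c(V_i)$ for $c$ a polynomial in Pontrjagin classes, both agree with the paper. The gap is in how you feed this into the hypothesis. The hypothesis is a purely formal statement about the abstract polynomial ring $\bQ[y_1,\dots,y_p]$, and you are free to specialise the $y_i$ to any elements of $H^*_T$ for which the power sums $\sum_i B_i y_i^n$ land in $R^*_T$. You choose $y_i\mapsto e^T(V_i)$, and this is precisely what creates the difficulty you cannot resolve: for odd $n$ the class $\sum_i B_i e^T(V_i)^n$ is not visibly tautological (the classes $\kappa_{e^{n+1}}$ only see the isolated fixed components, since $e(TX_i)^{n+1}=0$ on positive-dimensional ones), and the ``generating-function or inductive manoeuvre'' you appeal to is never supplied. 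The paper avoids this entirely by specialising $y_i\mapsto p_j(V_i)$ for each fixed $j$: then $\sum_i B_i p_j(V_i)^n=\rho(\kappa_{ep_j^n})\in R^*_T$ with no further work, and the hypothesis immediately makes every $p_j(V_{i_0})$ (with the same $i_0$ for all $j$, since the hypothesis is a single abstract statement) integral over $R^*_T$.

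Your endgame is also incomplete. Integrality of the single element $e^T(V_{i_0})$ over $R^*_T$ gives no lower bound on $\Kdim(R^*_T)$, and your proposed fix (that the symmetric-function combinations of the weights ``should make every $x_j$ algebraic'') is exactly the point needing proof. The paper closes this by citing Venkov's theorem: effectiveness of the action makes $V_{i_0}$ a faithful representation, so $H^*_T$ is finite over the subring generated by the classes $p_j(V_{i_0})$, hence integral over $R^*_T$; Krull dimension is preserved under integral extensions, giving $\Kdim(R^*_T)=k$, and the surjection $R^*(W)\to R^*_T$ finishes the argument. If you replace your Euler-class specialisation by the Pontrjagin-class one and invoke Venkov at the end, your argument becomes the paper's.
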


It is perhaps not clear when the hypothesis of this theorem is likely to hold. The following lemma, which we learnt from \cite{BCES}, gives a simple criterion.

\begin{lemma}
Suppose that we have discarded the $B_i$ which are zero, and that this is not all of them. If the remaining numbers $B_1, B_2, \ldots, B_p$ have all partial sums non-zero, then $\bQ[y_1, y_2, \ldots, y_p]$ is finite over the subring generated by
\begin{equation}\label{eq:1}
\sum_{i=1}^p B_i y_i^n, \quad n=1,2,3,\ldots,
\end{equation}
and so every $y_i$ is integral over this subring.
\end{lemma}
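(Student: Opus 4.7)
The plan is to reduce the finiteness claim to a variety calculation and then run a Vandermonde argument. Both $S := \bQ[y_1, \ldots, y_p]$ and its subring $R$ are graded with $\deg y_i = 1$ (so each $P_n := \sum_i B_i y_i^n$ is homogeneous of degree $n$), and $R_+ S$ is a homogeneous ideal of $S$, generated by finitely many of the $P_n$ by the Hilbert basis theorem. By graded Nakayama, $S$ is finite over $R$ if and only if the quotient $S/R_+ S$ is finite-dimensional over $\bQ$; Hilbert's Nullstellensatz applied over $\bar{\bQ}$, together with the $\bQ^*$-scaling symmetry of the $P_n$, then reduces the claim to showing that the affine variety $V(P_1, P_2, \ldots) \subseteq \bA^p_{\bar{\bQ}}$ consists only of the origin. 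Once this is known, every $y_i$ is automatically integral over $R$ since a finite extension is integral.

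For the variety calculation I would use the classical Vandermonde determinant. Given a common zero $y = (y_1, \ldots, y_p) \in \bar{\bQ}^p$ of all the $P_n$, group coordinates by value: let $z_1, \ldots, z_r$ be the distinct nonzero entries of $y$, and set $C_j := \sum_{\{i : y_i = z_j\}} B_i$. Then $P_n(y) = \sum_{j=1}^r C_j z_j^n = 0$ for every $n \geq 1$, and taking $n = 1, \ldots, r$ yields a linear system in the $C_j$ whose coefficient matrix is $\mathrm{diag}(z_j)$ times the $r \times r$ Vandermonde in the $z_j$, hence invertible because the $z_j$ are distinct and nonzero. Therefore $C_j = 0$ for every $j$.

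To close the argument I would invoke the hypothesis, reading ``all partial sums non-zero'' as the condition that no nonempty subset of $\{B_1, \ldots, B_p\}$ sums to zero (equivalently, that every ordering of the $B_i$ has all initial partial sums nonzero). Under this reading each $C_j = 0$ forces $\{i : y_i = z_j\} = \emptyset$, so $y$ has no nonzero coordinate and $y = 0$, completing the reduction. The main point to settle first is precisely this reading of the hypothesis: the strictly literal interpretation ``$B_1 + \cdots + B_k \neq 0$ for $k = 1, \ldots, p$ in the ordering given'' is not enough, since $B = (1, 2, -1)$ has partial sums $1, 3, 2$ but $(y_1, y_2, y_3) = (1, 0, 1)$ gives $P_n = 1 + 0 - 1 = 0$ for every $n$, so by the graded-Nakayama reduction no $y_i$ could be integral over $R$. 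The intended meaning is almost certainly the subset-sum condition above (this is what \cite{BCES} actually proves), and one should confirm this and, if needed, slightly sharpen the statement before writing up.
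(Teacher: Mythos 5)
Your proof is correct and follows essentially the same route as the paper's: a graded Nakayama/Nullstellensatz-style reduction to showing that the common zero locus of the $P_n$ is just the origin, followed by the same group-coordinates-by-value Vandermonde argument. Your reading of ``all partial sums non-zero'' as ``no non-empty sub-collection of the $B_i$ sums to zero'' is exactly what the paper's own proof uses (the groups $\{i : y_i = z_j\}$ are arbitrary subsets, not initial segments), so your counterexample $B=(1,2,-1)$, $y=(1,0,1)$ is a fair criticism of the statement's wording rather than a gap in either argument.
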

\begin{proof}
Write $B \leq \bQ[y_1, y_2, \ldots, y_p]$ for the subring generated by the $\sum_{i=1}^p B_i y_i^n$, and $B^+$ for the subset of positive-degree elements. 

\vspace{1ex}

\noindent\textbf{Claim.} If $\sqrt{(B^+)}=(y_1, y_2, \ldots, y_p)$ then $\bQ[y_1, \ldots, y_p]$ is finite over $B$.

\vspace{1ex}

Our proof of this claim follows the discussion at \cite{MO}. Under the assumption the quotient ring $\bQ[y_1, y_2, \ldots, y_p]/(B^+)$ has every $y_i$ nilpotent, so is a finite $\bQ$-module; let $z_1, z_2, \ldots, z_m \in \bQ[y_1, y_2, \ldots, y_p]$ be lifts of these finitely-many generators, which can be taken to be homogeneous as the ideal $(B^+)$ is homogeneous. We claim that these generate $\bQ[y_1, y_2, \ldots, y_p]$ as a $B$-module; let $M \subset \bQ[y_1, y_2, \ldots, y_p]$ be the $B$-submodule that they generate.

As the $z_i$ are homogeneous, and $B$ is generated by homogeneous elements, $M$ is a graded submodule of $\bQ[y_1, y_2, \ldots, y_p]$ with the monomial-length grading. Suppose $p \in \bQ[y_1, y_2, \ldots, y_p]$ is an element of minimal grading which does not lie in $M$. Then we may write
$$p = \sum_{i=1}^m U_i z_i + \sum V_j b_j$$
with $U_i \in \bQ$, $V_j \in \bQ[y_1, y_2, \ldots, y_p]$, and $b_j \in (B^+)$. But the $b_j$ have strictly positive degree, so the $V_j$ have strictly smaller degree than $p$ so must lie in $M$, and hence $p$ does too, which proves the claim.

\vspace{1ex}

In order to prove the lemma we must therefore show that $(y_1, y_2, \ldots, y_p)=(0,0,\ldots,0)$ is the only simultaneous solution to the equations $\sum_{i=1}^p B_i y_i^n=0$ for $n \in \bN$. If $(y_1, y_2, \ldots, y_p) \in \bQ^p$ is a solution, then grouping terms with $y_i = y_j$ together we obtain \emph{distinct} rational numbers $\bar{y}_i$ solving the equations
$$\sum_{i=1}^q \bar{B}_i \bar{y}_i^n=0$$
where each $\bar{B}_i$ is a partial sum of the $B_i$, and hence non-zero by assumption. But this means that the vector $(\bar{B}_1 \bar{y}_1, \ldots, \bar{B}_q \bar{y}_q)$ is in the kernel of the (transposed) Vandermonde matrix associated to $(\bar{y}_1, \ldots, \bar{y}_q)$, so as the $\bar{y}_i$ are all distinct it follows that $(\bar{B}_1 \bar{y}_1, \ldots, \bar{B}_q \bar{y}_q)=0$, and as the $\bar{B}_i$ are all non-zero it follows that $\bar{y}_i=0$ as required.
\end{proof}

The following corollary, whilst not so powerful as Theorem \ref{thm:main}, is often easier to apply as one does not need to classify the normal representations at the fixed set.

\begin{corollary}\label{cor:main}
Let the path components $X_1, X_2, \ldots, X_\ell$ of the fixed set $W^T$ have Euler characteristics $A_1, A_2, \ldots, A_\ell$. If some $x_i \in \bQ[x_1, x_2, \ldots, x_\ell]$ is integral over the subring generated by
$$\sum_{i=1}^\ell A_i x_i^n, \quad n=1,2,3,\ldots,$$
then $H^*_T$ is integral over $R^*_T$. In particular $\Kdim(R^*(W)) \geq k$.
\end{corollary}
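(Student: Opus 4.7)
The plan is to deduce the corollary directly from Theorem \ref{thm:main} by exhibiting a ring homomorphism that reconciles the two parametrisations of the fixed-point data. The only content is a compatibility check: an integral dependence among the $x_i$ over the ``$A$-power-sum subring'' pushes forward to an integral dependence among the $y_j$ over the ``$B$-power-sum subring''.

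First I would record the obvious combinatorial link between the two enumerations. Since the isotropy $T$-representation is locally constant on $W^T$, each path component $X_i$ has a well-defined normal representation type, and I write $j(i) \in \{1, 2, \ldots, p\}$ for its index. By the definition of $B_j$ in Theorem \ref{thm:main} we then have
$$B_j \;=\; \sum_{i\,:\, j(i) = j} A_i \qquad (j = 1, \ldots, p).$$

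Next I would define the ring homomorphism
$$\phi : \bQ[x_1, x_2, \ldots, x_\ell] \lra \bQ[y_1, y_2, \ldots, y_p], \qquad x_i \longmapsto y_{j(i)},$$
and observe the key identity
$$\phi\!\left(\sum_{i=1}^\ell A_i x_i^n\right) \;=\; \sum_{i=1}^\ell A_i y_{j(i)}^n \;=\; \sum_{j=1}^p B_j y_j^n$$
for each $n \geq 1$. Thus $\phi$ carries the subring $\gA \subset \bQ[x_1, \ldots, x_\ell]$ generated by the hypothesised power sums onto the corresponding subring $\gB \subset \bQ[y_1, \ldots, y_p]$ appearing in Theorem \ref{thm:main}.

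The final step is immediate: applying $\phi$ to a monic integral relation for some $x_i$ over $\gA$ gives a monic integral relation for $y_{j(i)}$ over $\gB$, so the hypothesis of Theorem \ref{thm:main} is satisfied and the theorem delivers $\Kdim(R^*(W)) \geq k$. There is essentially no obstacle here; the only point one must not forget is the well-definedness of the assignment $i \mapsto j(i)$, which rests on the standard fact that $T$-representations at nearby fixed points are isomorphic, so that ``the'' normal representation of a connected component $X_i$ makes sense.
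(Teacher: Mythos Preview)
Your proof is correct and follows essentially the same route as the paper: both define the homomorphism $\phi$ sending $x_i$ to $y_{j(i)}$, check that it carries the $A$-power-sums to the $B$-power-sums, and then push an integral relation for $x_i$ forward to one for $y_{j(i)}$ so that Theorem \ref{thm:main} applies. Your remark on the well-definedness of $j(i)$ is a nice extra detail that the paper leaves implicit.
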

\begin{proof}
Consider the ring homomorphism $\phi : \bQ[x_1, x_2, \ldots, x_\ell] \to \bQ[y_1, y_2, \ldots, y_p]$ defined by sending $x_i$ to $y_j$ if the normal representation at every point of $X_i$ is $V_j$. Then
$$\phi\left(\sum_{i=1}^\ell A_i x_i^n\right) = \sum_{i=1}^\ell A_i \phi(x_i)^n = \sum_{j=1}^p B_j y_j^n$$
so $\phi$ sends the subring $A \subset \bQ[x_1, x_2, \ldots, x_\ell]$ generated by the $\sum_{i=1}^\ell A_i x_i^n$ onto the subring $B \subset \bQ[y_1, y_2, \ldots, y_p]$ generated by the $\sum_{i=1}^p B_i y_i^n$.

If $x_i$ is integral over $A$ then there is a polynomial $q(x) = \sum a_i x^i$ with coefficients in $A$ such that $q(x_i)=0$. Then $q'(y) = \sum \phi(a_i) y^i$ is a polynomial over $B$ such that $q'(\phi(x_i))=0$, so $y_j = \phi(x_i)$ is integral over $B$, and hence Theorem \ref{thm:main} applies.
\end{proof}

\begin{example}\label{ex:ConnFixSet}
There are several standard conditions which oblige a torus action on a manifold $W$ to have connected fixed-set. For example
\begin{enumerate}[(i)]
\item Let $W$ have dimension $2n$, and suppose that all its cohomology apart from $H^0(W;\bQ)$ and $H^{2n}(W;\bQ)$ lies in odd degrees, and that there is some cohomology in odd degrees. Then $W^T$ is connected (by the localisation theorem in equivariant cohomology, which we will describe in the following section).

\item If $W$ has trivial even-dimensional rational homotopy groups, then $W^T$ is empty or connected \cite[Theorem IV.5]{Hsiang}.
\end{enumerate}
In such cases $\chi(W^T) = \chi(W)$, so if this is non-zero then the hypotheses of Corollary \ref{cor:main} are satisfied.
\end{example}

\begin{example}
Suppose that the action of $T^k$ on $W$ has isolated fixed points, or more generally that all $A_i$ are equal and non-zero. Then the subring generated by the $\sum_{i=1}^\ell A_i x_i^n$ is the subring of symmetric polynomials in $\bQ[x_1, x_2, \ldots, x_\ell]$, and every $x_i$ is integral over this so the hypotheses of Corollary \ref{cor:main} are satisfied.

This immediately implies that if $W^{2n}$ is a quasitoric manifold (that is, the ``toric manifolds" of \cite{DJ}) then $\mathrm{Kdim}(R^*(W)) \geq n$, as such manifolds by definition have an action of an $n$-torus with isolated fixed points. Slightly more subtly, if $G/K$ is a homogeneous space of rank zero (i.e.\ $\mathrm{rk}(G) = \mathrm{rk}(K)$) then a common maximal torus $T$ of $G$ and $K$ acts on $G/K$ with fixed points given by the finite set $(W_G(T) \cdot K)/K \subset G/K$, where $W_G(T) := N_G(T)/T$ denotes the (finite) Weyl group of $G$, so $\mathrm{Kdim}(R^*(G/K)) \geq \mathrm{rk}(G)$.
\end{example}

\subsection{The localisation theorem}\label{sec:Loc}

We now prepare for the proof of Theorem \ref{thm:main}. Let $X_1, X_2, \ldots, X_\ell$ be the components of the fixed set $W^T$, with $d_i := \dim(X_i)$, let $\nu_{X_i}$ be the normal bundle of $X_i$ in $W$, and let $\nu_i$ be the $T$-representation which arises as each fibre of $\nu_{X_i}$. Let us write $A_i := \chi(X_i)$, and write $V_1, V_2, \ldots, V_p$ for an enumeration of the $T$-representations $\nu_i$ which arise. Then we have $B_i = \sum_{j\ s.t.\ \nu_j = V_i} A_j$.

Let us write $\rho_i : H^*_T(W) \to H^*_T(X_i)$ for the restriction map in equivariant cohomology, and $\pi_! : H^*_T(W) \to H^{*-d}_T$ and $(\pi_i)_! : H^*_T(X_i) \to H^{*-d_i}_T$ for the fibre integration maps. As the $T$-action on $X_i$ is trivial we have $X_i \hcoker T = BT \times X_i$, and so the fibre integration map $(\pi_i)_!$ is simply given by slant product with the fundamental class of $X_i$. As $T$ acts on the normal bundle $\nu_{X_i} \to X_i$, there is an induced vector bundle $\nu^T_{X_i} := \nu_{X_i} \hcoker T \to X_i \hcoker T$. 

Let $S \subset H^*_T$ be the multiplicative subset of nonzero elements. The localisation theorem in equivariant cohomology (of Borel \cite[XII.\S3]{Borel}, Hsiang \cite{Hsiang70} and Quillen \cite[Section 4]{Quillen}) says that the map
$$\bigoplus_i \rho_i : S^{-1}H^*_T(W) \lra \bigoplus\limits_{i} S^{-1}H^{*}_T(X_i)$$
is an isomorphism. Even more is true: Atiyah and Bott have shown \cite[eq (3.8)]{AB} that the class $e(\nu_{X_i}^T) \in S^{-1}H_T^{d-d_i}(X_i)$ is a unit and that we have a commutative diagram
\begin{equation}\label{eq:LocGysin}
\begin{gathered}
\xymatrix{
S^{-1}H^*_T(W) \ar[r]_-\sim^-{\bigoplus\limits_i \rho_i} \ar[d]^-{\pi_!} & {\bigoplus\limits_{i} S^{-1}H^{*}_T(X_i)} \ar[rr]^-{\bigoplus\limits_i e(\nu_{X_i}^T)^{-1}}_-{\sim} & & {\bigoplus\limits_{i} S^{-1}H^{*+d_i-d}_T(X_i)} \ar[d]^-{\sum_i (\pi_i)_!}\\
S^{-1}H^{*-d}_T \ar@{=}[rrr] & & & S^{-1}H^{*-d}_T.
}
\end{gathered}
\end{equation}
See \cite[p.\ 366]{AP} for a textbook exposition of the localisation theorem.

\subsection{Proof of Theorem \ref{thm:main}}

Using the diagram \eqref{eq:LocGysin} to compute 
$$\kappa_{ep_I} = \pi_!(e(T^TW) p_I(T^TW)) \in S^{-1}H^*_T,$$
which we know lies in the subring $H^*_T$, gives
\begin{align*}
\kappa_{ep_I} &= \sum_{i=1}^\ell (\pi_i)_!\left(\frac{e(TX_i \oplus \nu_{X_i}^T) p_I(TX_i \oplus \nu_{X_i}^T)}{e(\nu_{X_i}^T)}\right) \\
&= \sum_{i=1}^\ell (\pi_i)_!(e(TX_i) p_I(TX_i \oplus \nu_{X_i}^T))
\end{align*}
and in $H^*_T(X_i) = H^*_T \otimes H^*(X_i)$ we have 
$$p_I(TX_i \oplus \nu_{X_i}^T) = p_I(\nu_i) \otimes 1 + \text{terms with a nontrivial $H^*(X_i)$ component}.$$
When we multiply by $e(TX_i)$ and integrate over $X_i$ the latter terms do not contribute, so as $\int_{X_i} e(TX_i) = \chi(X_i) = A_i$ we get 
\begin{equation*}
\kappa_{ep_I} = \sum_{i=1}^\ell A_i p_I(\nu_i) \in H^*_{T}.
\end{equation*}
Grouping these terms by the representation types $V_j$ instead gives
\begin{equation}\label{eq:2}
\kappa_{ep_I} = \sum_{i=1}^p B_i p_I(V_i) \in H^*_{T}.
\end{equation}

Applying this to $p_I = p_j^n$ we find that 
$$\sum_{i=1}^p B_i p_j(V_i)^n \in R^*_T \text{ for all $j$ and $n$}.$$
Applying the hypothesis of the theorem for each $j$, we find that there exists an $i$ such that all $p_j(V_i)$ lie in a common integral extension $R^*_T \subseteq R' \subseteq H^*_T$. On the one hand $R'$ is integral over $R^*_T$. On the other hand by a theorem of Venkov \cite{venkov} the ring $H^*_T$ is finite over the subring generated by the $p_j(V_i)$ (because $V_i$ is a faithful representation of $T$, by the standard lemma given below), and hence is finite (and so integral) over $R'$. It follows that $H^*_T$ is integral over $R^*_T$, so in particular they have the same Krull dimension, namely $k$.

Finally, $R^*(W) \to R^*_T$ is surjective and so $\Kdim(R^*(W)) \geq \Kdim(R^*_T) = k$.

\begin{lemma}
If $T$ acts effectively and smoothly on a connected closed manifold $W$, then any $T$-representation arising as the normal space to a point on $W^T$ is faithful.
\end{lemma}
\begin{proof}
We may choose a $T$-invariant Riemannian metric on $W$, so the exponential map $\exp : T W \to W$ is equivariant; the restriction of the exponential map to a fibre $T_x W \to W$ is a diffeomorphism when restricted to a neighbourhood of $0 \in T_x W$. 

If the action of $T$ on the normal space $V$ to $W^T$ at $x$ had a non-trivial kernel $\{e\} < T' \leq T$ then the $T'$-action on $T_x W = T(W^T) \oplus V$ is trivial. By exponentiating, it follows that $T'$ fixes an open neighbourhood of $x \in W$. Thus the fixed set $W^{T'}$ is a submanifold of $W$ which contains an open subset; as $W$ is connected it follows that it is the whole of $W$. This contradicts the action being effective.
\end{proof}

\subsection{An extension}

The discussion so far gives a technique more general Theorem \ref{thm:main}, but difficult to formalise in a single result. It is best described through an example.

\begin{proposition}\label{prop:loc}
Let $T$ act effectively on $W$ with two fixed components $X_1$ and $X_2$. Suppose that $\chi(X_1) = -\chi(X_2) \neq 0$ but that the normal $T$-representations $\nu_1$ and $\nu_2$ at $X_1$ and $X_2$ have all Pontrjagin classes distinct (when they are non-zero). Then $\Kdim(R^*(W)) \geq k$.
\end{proposition}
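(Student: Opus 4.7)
The plan is to show that $\Kdim(R^*_T) \geq k$; combined with the surjection $R^*(W) \twoheadrightarrow R^*_T$, this yields the desired bound. I would first specialise equation~\eqref{eq:2} to the present setting (two distinct normal representations, with $B_1 = -B_2 = A \neq 0$), which gives, for every monomial $I$ in the Pontrjagin classes,
$$\kappa_{ep_I} = A\bigl(p_I(\nu_1) - p_I(\nu_2)\bigr) \in R^*_T.$$
Abbreviating $u_j := p_j(\nu_1)$, $v_j := p_j(\nu_2)$, and $\gamma_j := u_j - v_j$, the hypothesis on distinct Pontrjagin classes guarantees $\gamma_j \neq 0$ whenever at least one of $u_j, v_j$ is non-zero, and the elements $\gamma_j$ together with all $u_j^n - v_j^n$ lie in $R^*_T$.

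The key algebraic step will be the observation that $w := \gamma_j u_j = u_j(u_j - v_j) \in H^*_T$ satisfies the \emph{monic} quadratic
$$w^2 - (u_j^2 - v_j^2)\,w + \gamma_j^2\, u_j v_j = 0,$$
both of whose coefficients lie in $R^*_T$: the first is $\kappa_{e p_j^2}/A$, while the second satisfies the direct expansion
$$\gamma_j^2\, u_j v_j = (u_j^2 - v_j^2)^2 - \gamma_j\,(u_j^3 - v_j^3).$$
This exhibits $\gamma_j u_j$ (and symmetrically $\gamma_j v_j$) as integral over $R^*_T$. Putting $\gamma := \prod_j \gamma_j$, the product taken over those $j$ with $\gamma_j \neq 0$, I then pass to $R^*_T[\gamma^{-1}]$: the class $u_j$ itself satisfies the monic polynomial $z^2 - (u_j + v_j)z + u_j v_j = 0$, whose coefficients $u_j + v_j = (u_j^2 - v_j^2)/\gamma_j$ and $u_j v_j = \gamma_j^2 u_j v_j/\gamma_j^2$ both lie in $R^*_T[\gamma^{-1}]$. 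Hence every $p_j(\nu_1)$ is integral over $R^*_T[\gamma^{-1}]$.

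Assuming $T$ acts effectively on the connected manifold $W$, the representation $\nu_1$ is faithful (the kernel of $T$ on $T_xW$ for any $x \in X_1$ fixes a neighbourhood of $x$ and hence all of $W$), so Venkov's theorem tells us $H^*_T$ is a finite module over $\bQ[u_1, u_2, \ldots]$. Combined with the integrality just established, this shows $H^*_T[\gamma^{-1}]$ is integral over $R^*_T[\gamma^{-1}]$. Since $\gamma \neq 0$ in the polynomial ring $H^*_T$, the localisation $H^*_T[\gamma^{-1}]$ is the complement of the proper subvariety $V(\gamma) \subset \mathrm{Spec}(H^*_T) = \bA^k$ and retains Krull dimension $k$; by integrality $\Kdim(R^*_T[\gamma^{-1}]) = k$; and since localisation cannot increase Krull dimension, $\Kdim(R^*_T) \geq \Kdim(R^*_T[\gamma^{-1}]) = k$.

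The principal obstacle is that Theorem~\ref{thm:main} does not apply directly to this two-component setting, since the abstract element $y_1$ is \emph{not} integral over $\bQ[y_1^n - y_2^n : n \geq 1] \subseteq \bQ[y_1, y_2]$. The new content of the proof is therefore the identity displayed in the second paragraph, which produces a monic equation for $\gamma_j u_j$ over $R^*_T$ and, after localising at the non-zero element $\gamma$, restores integrality of each $p_j(\nu_1)$; Venkov's theorem then closes the argument.
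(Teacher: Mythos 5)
Your proof is correct and follows essentially the same route as the paper: invert the non-zero differences $p_j(\nu_1)-p_j(\nu_2)\in R^*_T$, recover each $p_j(\nu_1)$ in (or integral over) the localised ring via the identity $(p_j(\nu_1)^2-p_j(\nu_2)^2)/(p_j(\nu_1)-p_j(\nu_2))=p_j(\nu_1)+p_j(\nu_2)$, and finish with Venkov's theorem together with the fact that inverting finitely many non-zero polynomials preserves the Krull dimension of $H^*_T$. The paper skips your intermediate monic quadratic for $\gamma_j u_j$ and simply writes $p_j(\nu_1)=\tfrac{1}{2}\bigl(\gamma_j+(u_j^2-v_j^2)/\gamma_j\bigr)\in S^{-1}R^*_T$ directly, but this is only a cosmetic difference.
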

\begin{proof}
We have that
$$\tfrac{1}{\chi(X_1)}\kappa_{e p_j^n} = p_j(\nu_1)^n-p_j(\nu_2)^n \in R^*_T \leq H^*_T$$
for all $j$ and $n$, and $p_j(\nu_1)-p_j(\nu_2) \neq 0 \in R^*_T$. Hence
$$p_j(\nu_1) = \frac{1}{2} \left( p_j(\nu_1)-p_j(\nu_2) + \frac{p_j(\nu_1)^2-p_j(\nu_2)^2}{p_j(\nu_1)-p_j(\nu_2)}\right) \in R^*_T[(p_j(\nu_1)-p_j(\nu_2))^{-1}].$$
Therefore after inverting the finite set
$$S := \{p_j(\nu_1)-p_j(\nu_2), j = 1,2,\ldots\}$$
of non-zero elements in $R^*_T \leq H^*_T = \bQ[x_1, x_2, \ldots, x_k]$, we find that the $p_j(\nu_1)$ lie in $S^{-1}R^*_T$, and hence by Venkov's theorem \cite{venkov} that $S^{-1} H^*_T$ is a finite $S^{-1}R^*_T$-module. As $S^{-1} H^*_T$ still has Krull dimension $k$ (there is a maximal ideal $\mathfrak{m}$ of $H_T^*$ not containing the product of the finitely-many elements in $S$---as the intersection of all maximal ideals is zero---whence $(S^{-1} H^*_T)_{S^{-1}\mathfrak{m}} \cong (H^*_T)_{\mathfrak{m}}$ so $S^{-1}\mathfrak{m}$ is a maximal ideal of $S^{-1} H^*_T$ of height $k$), it follows that $S^{-1}R^*_T$ has Krull dimension $k$ and so $\Kdim(R^*_T) \geq k$. Hence $\Kdim(R^*(W)) \geq k$.
\end{proof}

\section{Examples}\label{sec:Examples}

\subsection{Manifolds with mostly odd cohomology}\label{sec:OddCohom}

Let $W$ be a $2n$-dimensional manifold whose cohomology is only non-trivial in degrees 0, $2n$, and odd degrees, let $d = \dim_\bQ H^{odd}(W)$, and suppose $\chi(W) = 2-d \neq 0$. Then by Theorem \ref{mainThm:A} the $\bQ$-algebra $R^*(W)$ is finitely-generated and $R^*(W,*)$ is a finite $R^*(W)$-module.

Furthermore, by our method of proof, Grigoriev's theorem holds for these manifolds (our Theorem \ref{thm:Grigoriev}). Therefore the results of Sections 2 and 3 of \cite{galagriran-characteristic} hold for $W$ as well, as Grigoriev's theorem was the only external input. So if $d > 2$ then
$$\bQ[\kappa_{ep_1}, \ldots, \kappa_{ep_{n-1}}] \lra R^*(W)/\sqrt{0}$$
is surjective. Hence $\Kdim(R^*(W)) \leq n-1$.

By Example \ref{ex:ConnFixSet} (i), if $T = (S^1)^k$ acts on such a manifold $W$ then the fixed set $W^T$ is connected, so $\Kdim(R^*(W)) \geq k$. The construction of \cite[Section 4.1]{galagriran-characteristic} can be mimicked to obtain an action of $SO(k) \times SO(2n-k)$ on $\#^g S^k \times S^{2n-k}$ for any $k$, and the calculation of the characteristic classes $\kappa_{ep_i}$ for the associated bundle is entirely analogous. 

We obtain the following generalisation of the results of \cite{galagriran-characteristic}.

\begin{corollary}\label{cor:GGRGeneralisation}
For $k$ odd and $g > 1$ we have
$$\bQ[\kappa_{ep_1}, \ldots, \kappa_{ep_{n-1}}] \overset{\sim}\lra R^*(\#^g S^k \times S^{2n-k})/\sqrt{0}$$
and
$$R^*(\#^g S^k \times S^{2n-k})/\sqrt{0} \overset{\sim}\lra R^*(\#^g S^k \times S^{2n-k}, *)/\sqrt{0}.$$
Furthermore $(2-2g) \cdot c = \kappa_{ec} \in R^*(\#^g S^k \times S^{2n-k}, *)/\sqrt{0}$, so 
$$R^*(\#^g S^k \times S^{2n-k}, D^{2n})/\sqrt{0} = \bQ,$$
and hence $R^*(\#^g S^k \times S^{2n-k}, D^{2n})$ is a finite-dimensional $\bQ$-vector space.
\end{corollary}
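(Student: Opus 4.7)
The plan is to deduce all four conclusions by combining the finite-generation machinery for hypothesis (H2) with a Krull-dimension lower bound supplied by a suitable torus action on $W := \#^g S^k \times S^{2n-k}$. Since $k$ is odd, $H^*(W;\bQ)$ is supported in degrees $0$, $k$, $2n-k$, $2n$ --- that is, in degrees $0$, $2n$, and odd degrees --- and $\chi(W) = 2-2g \neq 0$ for $g > 1$, so hypothesis (H2) is in force. As the text just above the corollary records, Theorem \ref{thm:Grigoriev} therefore applies and Sections 2 and 3 of \cite{galagriran-characteristic} transcribe verbatim, yielding both the surjection $\bQ[\kappa_{ep_1},\ldots,\kappa_{ep_{n-1}}] \twoheadrightarrow R^*(W)/\sqrt{0}$ and the key relation $(2-2g)\,s^*c \equiv \kappa_{ec} \pmod{\sqrt{0}}$ in $R^*(W,*)$ for every $c \in H^*(BSO(2n);\bQ)$.

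For the Krull dimension lower bound I would construct an $\SO(k)\times\SO(2n-k)$-action on $W$ by mimicking the construction of \cite[Section 4.1]{galagriran-characteristic}; since $k$ and $2n-k$ are both odd, a maximal torus $T$ of this group has rank $(k-1)/2 + (2n-k-1)/2 = n-1$. Because $H^{\mathrm{ev}}(W;\bQ)$ is concentrated in degrees $0$ and $2n$, Example \ref{ex:ConnFixSet}(i) guarantees that $W^T$ is connected, and since $\chi(W)\neq 0$ Corollary \ref{mainCor:B}(i) then gives $\Kdim R^*(W) \geq n-1$.

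The first isomorphism now falls out of Krull-dimension bookkeeping: the polynomial ring $\bQ[\kappa_{ep_1},\ldots,\kappa_{ep_{n-1}}]$ is a domain of Krull dimension exactly $n-1$, so the surjection onto $R^*(W)/\sqrt{0}$ --- which has the same Krull dimension as $R^*(W)$ and hence is at least $n-1$ --- must have trivial kernel, since a nonzero kernel in a polynomial ring strictly drops the dimension. The second isomorphism then follows from $(2-2g)\,s^*c \equiv \kappa_{ec} \pmod{\sqrt{0}}$, which rewrites every additional generator $s^*c$ of $R^*(W,*)$, modulo nilpotents, as an element of the image of $R^*(W)$; injectivity uses that $R^*(W) \hookrightarrow R^*(W,*)$ is itself injective --- fibre-integration against any class $y$ with $\pi_!(y) = 1$ in the $W$-bundle $B\Diff^+(W,*) \to B\Diff^+(W)$ provides a left-inverse to the pullback via the projection formula $\pi_!(\pi^*x \cdot y) = x$ --- and injectivity descends to the $\sqrt{0}$-quotients.

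Lastly, the composition $B\Diff^+(W,D^{2n}) \to B\Diff^+(W,*) \to BSO(2n)$ is null-homotopic (fixing a disc trivialises the differential at the marked point), so $s^*c = 0$ in $R^*(W,D^{2n})$ for every $c$ of positive degree. Applied to $(2-2g)\,s^*c \equiv \kappa_{ec} \pmod{\sqrt{0}}$, this forces $\kappa_{ec} \in \sqrt{0}$ in $R^*(W,D^{2n})$; since the $\kappa_{ep_i}$ generate $R^*(W)/\sqrt{0}$ and the composition $R^*(W) \twoheadrightarrow R^*(W,D^{2n})$ is surjective, this gives $R^*(W,D^{2n})/\sqrt{0} = \bQ$. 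Finite generation of $R^*(W,D^{2n})$ (as a quotient of the finitely-generated $R^*(W)$ from Theorem \ref{mainThm:A}), together with the nilpotence of all positive-degree elements, then yields finite-dimensionality as a $\bQ$-vector space. The main technical hurdle, as the paper itself flags, is mimicking \cite[Section 4.1]{galagriran-characteristic} to produce the $\SO(k)\times\SO(2n-k)$-action together with verifying that Sections 2 and 3 of that paper transcribe verbatim once Grigoriev's theorem is in hand; both should be routine.
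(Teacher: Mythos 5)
Your proposal is correct and follows essentially the same route as the paper: verify hypothesis (H2), invoke Theorem \ref{thm:Grigoriev} so that Sections 2 and 3 of \cite{galagriran-characteristic} transcribe to give the surjection and the relation $(2-2g)\cdot c = \kappa_{ec}$, and obtain the matching lower bound $\Kdim(R^*(W)) \geq n-1$ from the maximal torus of the mimicked $SO(k)\times SO(2n-k)$-action via Example \ref{ex:ConnFixSet}(i) and Corollary \ref{mainCor:B}(i). The only unstated point is the existence of a class $y$ with $\pi_!(y)=1$ needed for your left-inverse argument; this is supplied by $y = e(T_\pi E)/\chi(W)$, using again that $\chi(W)=2-2g\neq 0$.
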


As in \cite{galagriran-characteristic} results can be obtained for $g=0$ or $1$ too, but we shall not write them out here.

\subsection{Quasitoric manifolds}

A quasitoric manifold $W^{2n}$ has by definition a smooth action of $T = (S^1)^n$ with isolated fixed points, so has $\Kdim(R^*(W)) \geq n$ by Corollary \ref{cor:main} . Furthermore, the integral cohomology of $W$ is supported in even degrees, so its rational cohomology is too, and therefore by Theorem \ref{mainThm:A} the $\bQ$-algebra $R^*(W)$ is finitely-generated and $R^*(W,*)$ is a finite $R^*(W)$-module.

\subsection{Non-finite generation}

We shall give some examples of manifolds $W$ for which $R^*(W)$, and in fact even $R^*(W)/\sqrt{0}$, is not finitely-generated. We shall do so by constructing actions of a torus $T$ on $W$ and showing that the tautological subring $R^*_T \leq H^*_T$ is not finitely-generated. As $H^*_T$ is an integral domain the natural surjection $R^*(W) \to R^*_T$ factors through $R^*(W)/\sqrt{0}$, which therefore shows that $R^*(W)/\sqrt{0}$ is not finitely-generated. 

Before attempting this method there is an important observation to be made.

\begin{obs}
Let $T = (S^1)^k$ act on $W$ satisfying the hypotheses of Theorem \ref{thm:main}; then that theorem shows that the inclusion $R^*_T \hookrightarrow H^*_T$ is integral. 

As $H^*_T$ is Noetherian, and $H^*(BT ; H^*(W))$ is a finitely-generated 
%free 
$H^*_T$-module, it follows from the Serre spectral sequence for the Borel construction that $H^*_T(W)$ is a finitely-generated $H^*_T$-module and hence is integral over $H^*_T$. 

Therefore the morphism $R^*_T \to H^*_T \to H^*_T(W)$ is integral, so $R^*_T \to R^*_T(*)$ is integral too. It then follows from applying Lemma \ref{lem:AlgIntegral} as in the proof of Proposition \ref{prop:Integral} that $R^*_T \to R^*_T(*)$ is finite and $R^*_T$ is a finitely-generated $\bQ$-algebra.
\end{obs}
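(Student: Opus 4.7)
The plan is to implement the three-step strategy already indicated in the statement. First I would extract from the proof of Theorem \ref{thm:main} the integrality of $R^*_T \hookrightarrow H^*_T$: that argument constructs an intermediate ring $R^*_T \subseteq R' \subseteq H^*_T$ in which the first inclusion is integral by the theorem's hypothesis and the second is finite, and hence integral, by Venkov's theorem applied to a faithful $T$-representation at a fixed point.

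Next I would show that $H^*_T(W)$ is a finitely-generated $H^*_T$-module by running the Serre spectral sequence for the Borel fibration $W \to W \moddd T \to BT$. Since $BT$ is simply connected the local coefficient system is trivial, so the $E_2$-page is $H^*_T \otimes_\bQ H^*(W)$, a free $H^*_T$-module of rank $\dim_\bQ H^*(W) < \infty$. Because $H^*_T$ is Noetherian and the differentials are $H^*_T$-linear, every $E_r$ remains finitely-generated, and the filtration on $H^*_T(W)$ has length at most $\dim(W)+1$ in each total degree; hence $H^*_T(W)$ inherits finite generation, and is in particular integral over $H^*_T$. Composing with the previous step, $H^*_T(W)$ is integral over $R^*_T$.

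Since $R^*_T(*) \subseteq H^*_T(W)$ and contains the image of $R^*_T$ (via the commutative square relating $\rho$ and $\rho_*$), it too is integral over $R^*_T$. To finish I would invoke the proof of Lemma \ref{lem:Integral} essentially verbatim with $(R^*(W), R^*(W,*))$ replaced by $(R^*_T, R^*_T(*))$: the ring $R^*_T(*)$ is a priori finitely-generated as an $R^*_T$-algebra (being the image of $R^*(W,*)$, which is generated over $R^*(W)$ by the classes $s^*c$ for $c$ running through finitely many generators of $H^*(BSO(d))$), so integrality upgrades this to finite generation as an $R^*_T$-module; the fibre-integration identity used in that lemma then expresses all high-degree $\kappa$-classes in terms of a finite set, giving finite generation of $R^*_T$ as a $\bQ$-algebra. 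The only nuisance I anticipate is bookkeeping, keeping $R^*_T$ straight as both a subring of $H^*_T$ and, via the pullback $H^*_T \to H^*_T(W)$, of $R^*_T(*)$; no genuine mathematical obstacle is expected, as each step is either a direct citation of an earlier result or a routine Noetherian argument.
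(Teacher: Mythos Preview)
Your proposal is correct and follows exactly the argument sketched in the observation itself: extracting the integrality of $R^*_T \hookrightarrow H^*_T$ from the Venkov-theorem step in the proof of Theorem~\ref{thm:main}, using the Serre spectral sequence to get finite generation of $H^*_T(W)$ over $H^*_T$, and then rerunning the proof of Lemma~\ref{lem:Integral} with $(R^*_T, R^*_T(*))$ in place of $(R^*(W), R^*(W,*))$. There is nothing to add; the paper's observation \emph{is} its own proof, and you have filled in the routine details faithfully.
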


So to pursue the programme we have suggested one should only try to use torus actions which \emph{do not} satisfy the hypotheses of Theorem \ref{thm:main}. The following allows us to construct manifolds with torus actions having prescribed normal representations and Euler characteristics of its fixed sets.

\begin{construction}\label{const:1}
Fix a positive odd integer $n$ and an even integer $k$. Let $\Sigma(k)^{2n}$ be the $2n$-manifold of Euler characteristic $k$ obtained as $\#^g S^n \times S^n$ (if $k$ is non-positive) or $\coprod^g S^{2n}$ (if $k$ is positive). Let $H(k)^{2n+1}$ be the manifold with boundary $\Sigma(k)^{2n}$ given by $\natural^g S^n \times D^{n+1}$ or $\coprod^g D^{2n+1}$ respectively.

Let $T$ be a torus, and suppose we are given even integers $B_1, B_2, \ldots, B_p$ and distinct faithful complex $T$-representations $V_1, V_2, \ldots, V_p$, which are all of the same dimension and which have no trivial subrepresentations. Then we can form the manifold
$$M(i) = M(B_i, V_i) := H(B_i)^{2n+1} \times \bS(V_i) \cup_{\Sigma(B_i) \times \bS(V_i)} \Sigma(B_i)^{2n} \times \bD(V_i).$$
which has a $T$-action on the right-hand factors. We may then let $M$ be the disjoint union $M = M(1) \sqcup M(2) \sqcup \cdots \sqcup M(p)$.
\end{construction}

As $V_i$ is  representation having no trivial subrepresentations, $T$ acts freely on $\bS(V_i)$ and its only fixed point on $\bD(V_i)$ is 0. Thus $M(i)^T = \Sigma(B_i)^{2n} \times \{0\}$, and the normal representation at these fixed points is given by $V_i$.

Each $V_i$ may be written as a sum $L_1 \oplus \cdots \oplus L_m$ of 1-dimensional complex $T$-representations; if a unit vector $v \in \bS(V_i)$ is written in components as $(l_1, \ldots, l_m)$ with all $l_j$ non-zero, then a $t \in T$ which stabilises it must act trivially on each $L_j$, so must act trivially on $V_i$, so $t$ must be the identity as $V_i$ is a faithful $T$-representation. Thus such a $v \in \bS(V_i)$ must lie in a free orbit, so in particular each path component of $M(i)$ has a free orbit. If one prefers a connected manifold, such free orbits in two different path components have tubular neighbourhoods $T$-equivariantly diffeomorphic to $T \times D^{2n+2m -\mathrm{rk}(T)}$, which can therefore be cut out and the remaining pieces glued together $T$-equivariantly along the common boundaries $T \times S^{2n+2m -\mathrm{rk}(T)-1}$. Doing this enough times yields a connected $T$-manifold with the same fixed-point data, and hence by localisation with the same characteristic classes.

\begin{lemma}
The $T$-manifold $M$ so obtained has $\kappa_{p_I}=0$ and
$$\kappa_{e p_I} = \sum_{i=1}^p B_i \cdot p_I(V_i) \in H^*_T.$$
\end{lemma}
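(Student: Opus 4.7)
The plan is to apply the equivariant localisation calculation already developed in the proof of Theorem \ref{thm:main}, identifying the $T$-fixed set of $M$ and then exploiting two special features of Construction \ref{const:1}: that each $V_i$ has no trivial subrepresentation, and that the fixed components have stably parallelisable tangent bundle.

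First I would identify $M^T$. Since $V_i$ contains no trivial summand, $T$ acts freely on $\bS(V_i)$, so $T$ has no fixed points on $H(B_i)^{2n+1}\times\bS(V_i)$, while the $T$-fixed set of $\Sigma(B_i)^{2n}\times\bD(V_i)$ equals $\Sigma(B_i)\times\{0\}$ with normal representation $V_i$ at every point. Therefore $M^T = \bigsqcup_i \Sigma(B_i)\times\{0\}$, and the total Euler characteristic of the components whose normal representation is $V_i$ is $\chi(\Sigma(B_i)) = B_i$.

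Next, every path component $X$ of $M^T$ is either $S^{2n}$ or a connect sum of copies of $S^n\times S^n$ with $n$ odd. In either case $TX$ is stably trivial (for $S^n\times S^n$ this uses that $TS^n\oplus\epsilon^1$ is trivial, and stable parallelisability is preserved under connect sum), so all rational Pontrjagin classes of $TX$ vanish in positive degrees. By the Whitney product formula, $p_I(TX\oplus\nu_{X\subset M}) = p_I(\nu_{X\subset M})$, lying entirely in the $H^*_T \otimes 1$ summand of $H^*_T(X) = H^*_T\otimes H^*(X)$. The formula for $\kappa_{ep_I}$ is now exactly \eqref{eq:2} in this setting, giving $\kappa_{ep_I} = \sum_i B_i\cdot p_I(V_i)$.

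For the vanishing of $\kappa_{p_I}$, I would rerun the localisation diagram preceding \eqref{eq:1} without the $e(T_\pi E)$-factor in the numerator, obtaining
$$\kappa_{p_I} = \sum_X (\pi_X)_!\left(\frac{p_I(TX\oplus\nu_{X\subset M})}{e(\nu_{X\subset M})}\right) = \sum_X (\pi_X)_!\left(\frac{p_I(V_X)}{e(V_X)}\otimes 1\right),$$
where $V_X$ denotes the normal representation along $X$. The integrand lies in $S^{-1}H^*_T\otimes H^0(X)$, and since $\dim X = 2n > 0$ the fibre integration $(\pi_X)_!\colon H^*_T(X) \to H^{*-2n}_T$ annihilates classes with trivial top-degree $H^*(X)$-component. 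Summing over $X$ gives $\kappa_{p_I} = 0$. There is no serious obstacle; the only point requiring care is precisely this last observation, which is where the stable parallelisability of $\Sigma(B_i)$ (and not merely its Euler characteristic) is used — without it, the $H^{2n}(X)$-component of $p_I(TX\oplus\nu_{X\subset M})$ could survive the fibre integration and contribute nonzero terms.
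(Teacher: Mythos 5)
Your argument is correct and is essentially the paper's own proof: both identify the fixed set as $\bigsqcup_i \Sigma(B_i)\times\{0\}$ with normal representation $V_i$, apply the localisation formula with and without the Euler factor, use stable parallelisability of $\Sigma(B_i)$ and (equivariant) triviality of the normal bundle to reduce the integrand to something in $S^{-1}H^*_T\otimes H^0(X)$, and observe that such classes push forward to zero while the Euler-weighted version yields $\sum_i B_i\,p_I(V_i)$. The only slip is the claim that $T$ acts \emph{freely} on $\bS(V_i)$ --- weight vectors have nontrivial stabilisers --- but all you need, and all that follows from the absence of trivial subrepresentations, is that $\bS(V_i)$ has no $T$-fixed points, so the conclusion is unaffected.
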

\begin{proof}
The second statement follows from \eqref{eq:2}. An analogous calculation shows that
$$\kappa_{p_I} = \sum_{i=1}^p (\pi_i)_! \left(\frac{p_I(TX_i \oplus \nu_{X_i}^T)}{e(\nu_{X_i}^T)}\right).$$
The bundle $\nu_{X_i} \to X_i$ is trivial, so the equivariant bundle $\nu_{X_i}^T$ is isomorphic to the pullback of $V_i$ to $X_i \hcoker T = BT \times X_i$. Thus the total Pontrjagin class satisfies
$$p(TX_i \oplus \nu_{X_i}^T) = p(V_i) \otimes p(TX_i) = p(V_i) \otimes 1 \in H^*_T \otimes H^*(X_i)$$
as $TX_i$ is stably trivial, and so $p_j(TX_i \oplus \nu_{X_i}^T) = p_j(V_i) \otimes 1$. Hence 
$$\frac{p_I(TX_i \oplus \nu_{X_i}^T)}{e(\nu_{X_i}^T)} = \frac{p_I(V_i)}{e(V_i)} \otimes 1$$
which pushes forward to zero (as $\dim (X_i) = 2n >0$), so $\kappa_{p_I}=0$.
\end{proof}

We now give our example.

\begin{example}
Let $T = (S^1)^2$ and $V_1$ be the 2-dimensional complex $T$-representation with weights $\{x_1+x_2, x_2\}$, and $V_2$ be the 2-dimensional complex $T$-representation with weights $\{x_1, x_2\}$. Construction \ref{const:1} with $B_1=2$ and $B_2=-2$ yields a $T$-manifold $W$ (which may be chosen to have any dimension at least 6 and congruent to 2 modulo 4) having $\kappa_{p_I}=0$ and
$$\kappa_{ep_I} = 2(p_I(V_1) - p_I(V_2)) \in H^*_T = \bQ[x_1, x_2].$$
For the chosen representations the total Pontrjagin classes are
\begin{align*}
p(V_1) &= (1-(x_1+x_2)^2)(1-x_2^2)\\
p(V_2) &= (1-x_1^2)(1-x_2^2).
\end{align*}
Let us consider the image of the tautological subring $R^*_T \leq H^*_T = \bQ[x_1, x_2]$ in the quotient $\bQ[x_1, x_2]/(x_2^2)$. Here $p_2(V_1) = p_2(V_2)=0$ and
\begin{align*}
p_1(V_1) &= -(2x_1x_2 + x_1^2)\\
p_1(V_2) &= -x_1^2,
\end{align*}
so the only non-zero $\kappa_{ep_I}$ in this quotient ring are
$$\kappa_{ep_1^i} = 2(-1)^i((2x_1x_2 + x_1^2)^i - (x_1^2)^i) = 4i(-1)^ix_1^{2i-1} x_2,$$
so the image of $R^*_T$ in $\bQ[x_1, x_2]/(x_2^2)$ is the subring $S:=\bQ\langle x_1 x_2, x_1^{3} x_2, x_1^{5} x_2, \ldots \rangle$. The ring $S$ is an infinite-dimensional $\bQ$-vector space, as the $x_1^{2i-1} x_2$ all have different degrees and are non-zero as they are not divisible by $x_2^2$. On the other hand, multiplication of any two positive-degree elements in $S$ is zero, as each positive-degree element is divisible by $x_2$ so a product is divisible by $x_2^2$. Thus $S$ is infinitely-generated, so $R^*_T$ is too, and hence $R^*(W)/\sqrt{0}$ is too.
\end{example}

Let us record some observations about this example.

\begin{remark}
If we suppose that $n \geq 5$ is odd and the $T$-manifolds $M(2, V_1)$ and $M(-2, V_2)$ are glued along a free orbit as suggested above, then the $(2n+4)$-manifold $M$ obtained is simply-connected and has the same integral homology as 
$$(S^2 \times S^{2n+2}) \# (S^2 \times S^{2n+2}) \# (S^3 \times S^{2n+1}) \# (S^n \times S^{n+4}) \# (S^n \times S^{n+4}).$$
\end{remark}

\begin{remark}
Although this tautological ring is not finitely-generated, Proposition \ref{prop:loc} applies to this torus action and gives $\Kdim(R^*(W)) \geq 2$. (Specifically, we have
$$p_1(V_1) - p_1(V_2) = -(2x_1x_2 + x_2^2) \quad\quad\quad p_2(V_1) - p_2(V_2) = x_2^2(2x_1x_2 + x_2^2)$$
so after inverting $s := x_2(2x_1 + x_2) \neq 0 \in R^*_T$ the subring $s^{-1}R^*_T \leq s^{-1}H^*_T$ contains $p_1(V_1)$, $p_2(V_1)$, $p_1(V_2)$, and $p_2(V_2)$.)
\end{remark}

\begin{remark}
Choosing $* \in X_2$ gives a map $R^*(W,*) \to H^*_T$, whose image is generated by the $\kappa_{ep_I} = 2(p_I(V_1)-p_I(V_2))$ along with the characteristic classes of the representation $V_2$, which are $e(V_2) = x_1x_2$, $p_1(V_2)=-(x_1^2+x_2^2)$, and $p_2(V_2)=x_1^2 x_2^2$. Rearranging a little shows that this is the subring generated by $e(V_2)$ and the $p_j(V_i)$, so is finitely generated. (Similarly if we choose $* \in X_1$.) This raises the interesting possibility that $R^*(W, *)$ might be finitely-generated in more generality than $R^*(W)$ is.
\end{remark}

\subsection{The complex projective plane}\label{sec:exCP2}

Let us consider the manifold $\bC\bP^2$, whose cohomology is supported in even degrees. Thus by Theorem \ref{mainThm:A} the $\bQ$-algebra $R^*(\bC\bP^2)$ is finitely-generated and $R^*(\bC\bP^2, *)$ is a finite $R^*(\bC\bP^2)$-module. We will explain estimates on the generators for these algebras, using the relations developed in Section \ref{sec:TraceId}. The computations were done with assistance from {\tt Maple}\texttrademark.

The trace identity technique of Section \ref{sec:TraceId} gives the relation
\begin{equation*}
c^3 = \kappa_{ec} c^2 - \frac{\kappa_{ec}^2-\kappa_{ec^2}}{2!}c + \frac{\kappa_{ec}^3 -3 \kappa_{ec} \kappa_{ec^2} +2 \kappa_{ec^3}}{3!} \in R^*(\bC\bP^2, *)
\end{equation*}
for any $c \in H^*(BSO(4)) = \bQ[p_1, e]$. In particular, for $c=e$ and $c=p_1$ we obtain
\begin{align}
e^3 = \kappa_{e^2} e^2 - \frac{\kappa_{e^2}^2-\kappa_{e^3}}{2!}e + \frac{\kappa_{e^2}^3 -3 \kappa_{e^2} \kappa_{e^3} +2 \kappa_{e^4}}{3!}\label{eq:CP2rele}\\
p_1^3 = \kappa_{ep_1} p_1^2 - \frac{\kappa_{ep_1}^2-\kappa_{ep_1^2}}{2!}p_1 + \frac{\kappa_{ep_1}^3 -3 \kappa_{ep_1} \kappa_{ep_1^2} +2 \kappa_{ep_1^3}}{3!}\label{eq:CP2relp1}
\end{align}
We may partially polarise the relation by taking $c = e + t \cdot p_1$, expanding and collecting coefficients of powers of $t$. The coefficients of $1$ and of $t^3$ simply give the relations \eqref{eq:CP2rele} and \eqref{eq:CP2relp1}. The coefficient of $t$ gives
\begin{align}\label{eq:CP2rel2}
\begin{split}
&-\kappa_{e^3 p_1}-(1/2)\kappa_{e^2}^2\kappa_{ep_1}-\kappa_{e^2p_1}e-\kappa_{ep_1}e^2+(1/2)\kappa_{e^2}^2 p_1\\
&+\kappa_{e^2} \kappa_{e^2p_1}-(1/2)\kappa_{e^3} p_1 +(1/2)\kappa_{ep_1}\kappa_{e^3}+\kappa_{e^2}\kappa_{ep_1}e-2\kappa_{e^2}e p_1+3 e^2 p_1=0
\end{split}
\end{align}
and the coefficient of $t^2$ gives
\begin{align}\label{eq:CP2rel3}
\begin{split}
&(1/2)e\kappa_{ep_1}^2-(1/2)\kappa_{ep_1}^2\kappa_{e^2}-\kappa_{e^2p_1^2}-(1/2)e\kappa_{ep_1^2}+\kappa_{ep_1}\kappa_{e^2p_1}\\
&+(1/2)\kappa_{ep_1^2}\kappa_{e^2}-p_1\kappa_{e^2p_1}-p_1^2\kappa_{e^2}+p_1\kappa_{ep_1}\kappa_{e^2}-2e p_1\kappa_{ep_1}+3 e p_1^2=0.
\end{split}
\end{align}
(More generally, one could fully polarise this relation, by writing $c = u + t \cdot v + s \cdot w$, expanding out and taking the coefficient of $ts$: this gives a trilinear form in the variables $(u,v,w)$ which vanishes for all $u,v,w \in H^*(BSO(4)) = \bQ[p_1, e]$, and there is no reason to take these to be linear terms. However, we will not pursue this here.)

The relations \eqref{eq:CP2rele}, \eqref{eq:CP2relp1}, \eqref{eq:CP2rel2} and \eqref{eq:CP2rel3}, multiplied by monomials in $\bQ[p_1, e]$ and pushed forward, show that certain $\kappa_{e^a p_1^b} \in R^*(\bC\bP^2)$ are decomposable. Specifically
\begin{align*}
\kappa_{xp_1^3} &\text{ is decomposable for any monomial $x \neq 1, e, p_1$}\\
\kappa_{xe p_1^2} &\text{ is decomposable for any monomial $x \neq 1, e, p_1$}\\
\kappa_{xe^2 p_1} &\text{ is decomposable for any monomial $x \neq 1, e, p_1$}\\
\kappa_{xe^3} &\text{ is decomposable for any monomial $x \neq 1, e, p_1$.}
\end{align*}
Writing $\equiv$ to mean ``equal modulo decomposables", there are further relations:

\begin{enumerate}[(i)]
\item Pushing \eqref{eq:CP2relp1} forward gives $\kappa_{p_1^3} \equiv \frac{3}{2} \kappa_{ep_1^2}$.

\item Pushing \eqref{eq:CP2relp1} multiplied by $p_1$ forward gives $\kappa_{p_1^4} \equiv \kappa_{ep_1^3}$.

\item Pushing \eqref{eq:CP2rele} forward gives that $\kappa_{e^3}$ is decomposable, and in fact that $\kappa_{e^3} = \kappa_{e^2}^2$.

\item Pushing \eqref{eq:CP2rele} multiplied by $p_1$ forward gives $\kappa_{e^3p_1} \equiv \kappa_{e^4}$.

\item Pushing \eqref{eq:CP2rel2} multiplied by $p_1$ forward gives $\kappa_{e^2p_1^2} \equiv \kappa_{e^3 p_1}$.

\item Pushing \eqref{eq:CP2rel3} forward gives $2\kappa_{e^2p_1} \equiv  \kappa_{ep_1^2}$.

\item Pushing \eqref{eq:CP2rel3} multiplied by $p_1$ forward gives $\kappa_{ep_1^3} \equiv \kappa_{e^2p_1^2}$.
\end{enumerate}
Using these relations we find that the five classes
$$\kappa_{p_1^2}, \kappa_{p_1^3}, \kappa_{p_1^4}, \kappa_{ep_1}, \kappa_{e^2} \in R^*(\bC\bP^2)$$
generate. 

As described in \cite[Section 2]{galagriran-characteristic}, it follows from work of Atiyah \cite{atiyah-signature} that for each Hirzebruch class $\mathcal{L}_i$ the associated class $\kappa_{\mathcal{L}_i} \in R^*(W)$ is pulled back via the natural map
$$\phi : B\Diff^+(W) \lra B\mathrm{Aut}(H, \lambda),$$
where $H=H^n(W;\bZ)/\text{torsion}$ and $\lambda : H \otimes H \to \bZ$ is the intersection form of $W$. 

For $W=\bC\bP^2$ the bilinear form $(H, \lambda) = (\bZ, (1))$ has automorphism group $\bZ/2$, which has trivial rational cohomology. Thus the classes $\kappa_{\mathcal{L}_i} \in R^*(\bC\bP^2)$ are zero. The first few are
\begin{align*}
7\kappa_{e^2}-\kappa_{p_1^2}&=0\\
-13\kappa_{e^2p_1}+2\kappa_{p_1^3}&=0\\
-19\kappa_{e^4}+22\kappa_{e^2p_1^2}-3\kappa_{p_1^4}&=0\\
127\kappa_{e^4p_1}-83\kappa_{e^2p_1^3}+10\kappa_{p_1^5}&=0\\
8718\kappa_{e^6}-27635\kappa_{e^4p_1^2}+12842\kappa_{e^2p_1^4}-1382\kappa_{p_1^6}&=0\\
-7978\kappa_{e^6p_1}+11880\kappa_{e^4p_1^3}-4322\kappa_{e^2p_1^5}+420\kappa_{p_1^7}&=0\\
-68435\kappa_{e^8}+423040\kappa_{e^6p_1^2}-407726\kappa_{e^4p_1^4}+122508\kappa_{e^2p_1^6}-10851\kappa_{p_1^8}&=0\\
11098737\kappa_{e^8 p_1}-29509334\kappa_{e^6 p_1^3}+20996751\kappa_{e^4 p_1^5}-5391213\kappa_{e^2 p_1^7}+438670\kappa_{p_1^9}&=0.
\end{align*}
The first Hirzebruch relation allows us to remove $\kappa_{e^2}$ from the list of generators. The second Hirzebruch relation, with the relations $\kappa_{p_1^3} \equiv \tfrac{3}{2}\kappa_{ep_1^2} \equiv  3 \kappa_{e^2p_1}$ proved above, shows that $\kappa_{p_1^3}$ is decomposable. This proves the

\begin{lemma}\label{lem:CP2Gen}
The classes $\kappa_{p_1^2}, \kappa_{p_1^4}, \kappa_{ep_1}$ generate $R^*(\bC\bP^2)$.
\end{lemma}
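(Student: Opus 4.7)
The plan is to start from the five-element generating set $\{\kappa_{p_1^2}, \kappa_{p_1^3}, \kappa_{p_1^4}, \kappa_{ep_1}, \kappa_{e^2}\}$ just established, and cut it down to the three classes $\kappa_{p_1^2}, \kappa_{p_1^4}, \kappa_{ep_1}$ by feeding in the first two Hirzebruch vanishing relations, namely $7\kappa_{e^2}-\kappa_{p_1^2}=0$ and $-13\kappa_{e^2 p_1}+2\kappa_{p_1^3}=0$.

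First, the relation $7\kappa_{e^2}=\kappa_{p_1^2}$ exhibits $\kappa_{e^2}$ as a scalar multiple of $\kappa_{p_1^2}$, so it may be removed from the generating set. What remains is to show that $\kappa_{p_1^3}$ is superfluous. For this I would combine the relations (i) $\kappa_{p_1^3} \equiv \tfrac{3}{2}\kappa_{ep_1^2}$ and (vi) $2\kappa_{e^2 p_1} \equiv -\kappa_{ep_1^2}$ already proved modulo decomposables: chaining them gives $\kappa_{p_1^3} \equiv -3\kappa_{e^2 p_1}$. Now substituting the second Hirzebruch relation in the form $\kappa_{e^2 p_1} = \tfrac{2}{13}\kappa_{p_1^3}$ yields $\kappa_{p_1^3} \equiv -\tfrac{6}{13}\kappa_{p_1^3}$, whence $\tfrac{19}{13}\kappa_{p_1^3}$ lies in the ideal of decomposables, i.e.\ $\kappa_{p_1^3}$ itself is decomposable. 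Since $|\kappa_{p_1^3}|=8$ and the only available generators of strictly smaller degree are the degree-$4$ classes $\kappa_{p_1^2}, \kappa_{ep_1}, \kappa_{e^2}$, the class $\kappa_{p_1^3}$ may be written as a polynomial in these, and after the first step this reduces to a polynomial in $\kappa_{p_1^2}$ and $\kappa_{ep_1}$ alone.

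Combining the two reductions leaves the three generators $\kappa_{p_1^2}, \kappa_{p_1^4}, \kappa_{ep_1}$, as required. There is no serious obstacle here: all the real work lies upstream, in the derivation of the five-generator set via the trace-identity relations of Section~\ref{sec:TraceId}, and in the availability of the Hirzebruch relations via the factorisation of $\kappa_{\mathcal{L}_i}$ through the (rationally acyclic) classifying space $B\Aut(\bZ,(1))$. The lemma itself is the bookkeeping step that assembles these ingredients.
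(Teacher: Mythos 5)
Your argument is correct and is essentially the paper's own proof: the first Hirzebruch relation eliminates $\kappa_{e^2}$, and combining relations (i) and (vi) with the second Hirzebruch relation gives $\kappa_{p_1^3}\equiv\tfrac32\kappa_{ep_1^2}\equiv-3\kappa_{e^2p_1}=-\tfrac{6}{13}\kappa_{p_1^3}$, forcing $\kappa_{p_1^3}$ to be decomposable. You merely spell out the degree bookkeeping more explicitly than the paper does; nothing differs in substance.
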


Let the ideal $I$ of $\bQ[\kappa_{p_1^2}, \kappa_{p_1^4}, \kappa_{ep_1}]$ be generated by those relations implied by \eqref{eq:CP2rele}--\eqref{eq:CP2rel3} for $\kappa_{e^a p_1^b}$ for $a+b \leq 9$, and the Hirzebruch relations listed above.\footnote{The threshold $a+ b \leq 9$ is not significant, and one could try to go further, but we have checked that adding those relations with $a+b=10$ does not change the ideal $I$.} Generators for this ideal can be computed to be
\begin{align*}
&(4 \kappa_{p_1^2}-7 \kappa_{e p_1})(\kappa_{p_1^2}-2 \kappa_{e p_1}) \kappa_{p_1^4}\\
&(4 \kappa_{p_1^2}-7 \kappa_{e p_1})(\kappa_{p_1^2}-2 \kappa_{e p_1}) (21 \kappa_{e p_1}+8 \kappa_{p_1^2})\\
&(4 \kappa_{p_1^2}-7 \kappa_{e p_1})(316 \kappa_{e p_1}^3-343 \kappa_{p_1^4})\\
&(4 \kappa_{p_1^2}-7 \kappa_{e p_1}) (1264 \kappa_{p_1^2} \kappa_{e p_1}^2+2212 \kappa_{e p_1}^3-5145 \kappa_{p_1^4}).
\end{align*}
This ideal is not radical, and $\sqrt{I}$ is generated by
\begin{align*}
&(4 \kappa_{p_1^2}-7 \kappa_{e p_1})(\kappa_{p_1^2}-2 \kappa_{e p_1})\\
&(4 \kappa_{p_1^2}-7 \kappa_{e p_1})(316 \kappa_{e p_1}^3-343 \kappa_{p_1^4}).
\end{align*}

\begin{corollary}\label{cor:CP2Variety}
There is a surjection from
$$\bQ[\kappa_{p_1^2}, \kappa_{ep_1}, \kappa_{p_1^4}]/((4 \kappa_{p_1^2}-7 \kappa_{e p_1})(\kappa_{p_1^2}-2 \kappa_{e p_1}),(4 \kappa_{p_1^2}-7 \kappa_{e p_1})(316 \kappa_{e p_1}^3-343 \kappa_{p_1^4}))$$
to $R^*(\bC\bP^2)/\sqrt{0}$.
\end{corollary}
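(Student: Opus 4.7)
The plan is to assemble the corollary from the generation and relation computations already established in the preceding discussion. First I would note that Lemma \ref{lem:CP2Gen} provides a surjective $\bQ$-algebra homomorphism
$$\pi : \bQ[\kappa_{p_1^2}, \kappa_{ep_1}, \kappa_{p_1^4}] \twoheadrightarrow R^*(\bC\bP^2),$$
so it remains to identify enough of $\ker(\pi)$ to pin down $R^*(\bC\bP^2)/\sqrt{0}$ up to a quotient. The ideal $I$ built in the paragraphs above is generated by polynomial expressions obtained from the trace-identity relations \eqref{eq:CP2rele}--\eqref{eq:CP2rel3} and from the Hirzebruch identities by multiplying with monomials in $p_1, e$ and pushing forward along $\pi$; each such generator therefore lies in $\ker(\pi)$, so $I \subseteq \ker(\pi)$ and hence $\sqrt{I} \subseteq \sqrt{\ker(\pi)}$.

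Taking quotients by the respective radicals gives the surjection
$$\bQ[\kappa_{p_1^2}, \kappa_{ep_1}, \kappa_{p_1^4}]/\sqrt{I} \twoheadrightarrow \bQ[\kappa_{p_1^2}, \kappa_{ep_1}, \kappa_{p_1^4}]/\sqrt{\ker(\pi)} = R^*(\bC\bP^2)/\sqrt{0}.$$
The explicit computation of $\sqrt{I}$ recalled above identifies it with the two-generator ideal appearing in the statement, so this yields the claimed surjection.

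For the Krull dimension bound I would use that surjections of commutative rings do not increase Krull dimension and that a ring and its reduction have the same Krull dimension, so it is enough to show that $V(\sqrt{I}) \subset \bA^3$ has dimension at most $2$. Both displayed generators share the factor $(4\kappa_{p_1^2}-7\kappa_{ep_1})$, so $V(\sqrt{I})$ decomposes as the union of the plane cut out by $4\kappa_{p_1^2}-7\kappa_{ep_1}=0$ with the intersection of the plane $\kappa_{p_1^2}-2\kappa_{ep_1}=0$ and the cubic hypersurface $316\kappa_{ep_1}^3-343\kappa_{p_1^4}=0$, which is a curve. The union has dimension $2$, whence $\Kdim(R^*(\bC\bP^2)) \leq 2$.

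Conceptually the only substantive input is the radical computation that turns $I$ into the two-generator ideal of the statement; once one has the three-element generating set from Lemma \ref{lem:CP2Gen} and the explicit list of pushed-forward relations, the radical is a routine (albeit computer-algebra-assisted) primary decomposition in a polynomial ring in three variables. The geometric step — writing $V(\sqrt{I})$ as a plane union a curve — is then immediate from the factorisation of the two generators, and the Krull dimension bound follows formally.
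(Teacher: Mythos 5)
Your argument is correct and follows the paper's own route exactly: the surjection comes from the generating set of Lemma \ref{lem:CP2Gen} together with the containment $I \subseteq \ker(\pi)$ and the stated computation of $\sqrt{I}$, and the dimension bound follows by reading off the variety (a plane union a curve) of the two-generator radical ideal. Nothing to add.
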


One can see that this quotient ring contains $\bQ[\kappa_{p_1^2}, \kappa_{p_1^4}]$ as a subring and is integral over it, so it has Krull dimension 2. It follows that $\Kdim(R^*(\bC\bP^2)) \leq 2$.

\subsubsection{Fixing a point}\label{sec:CP2FixPt}

It follows from Lemma \ref{lem:CP2Gen} that $R^*(\bC\bP^2, *)$ is generated by $e, p_1, \kappa_{p_1^2}, \kappa_{p_1^4}$ and $\kappa_{ep_1}$. Adding to the ideal $I$ above the relations \eqref{eq:CP2rele}--\eqref{eq:CP2rel3} gives an ideal $J$ of $\bQ[e, p_1, \kappa_{p_1^2}, \kappa_{p_1^4}, \kappa_{ep_1}]$ which is rather complicated, but its radical is generated by the relations
\begin{align*}
&(4 \kappa_{p_1^2}-7 \kappa_{e p_1})(\kappa_{p_1^2}-2 \kappa_{e p_1})\\
&1264 \kappa_{p_1^2} \kappa_{e p_1}^3-2212 \kappa_{e p_1}^4-1372 \kappa_{p_1^2} \kappa_{p_1^4}+2401 \kappa_{p_1^4} \kappa_{e p_1}\\
&10 \kappa_{p_1^2} \kappa_{e p_1}-28 \kappa_{p_1^2} p_1-21 \kappa_{e p_1}^2-14 \kappa_{e p_1} e+63 \kappa_{e p_1} p_1\\
&3 \kappa_{p_1^2} \kappa_{e p_1}-28 \kappa_{p_1^2} e-7 \kappa_{e p_1}^2+42 \kappa_{e p_1} e+7 \kappa_{e p_1} p_1\\
&45 \kappa_{p_1^2} \kappa_{e p_1}-112 \kappa_{e p_1}^2-84 \kappa_{e p_1} e+182 \kappa_{e p_1} p_1+196 e^2-196 p_1^2\\
&15 \kappa_{p_1^2} \kappa_{e p_1}-35 \kappa_{e p_1}^2+14 \kappa_{e p_1} e+35 \kappa_{e p_1} p_1+196 e^2-196 e p_1\\
&316 \kappa_{e p_1}^4+1264 \kappa_{e p_1}^3 e-1264 \kappa_{e p_1}^3 p_1-343 \kappa_{p_1^4} \kappa_{e p_1}-1372 \kappa_{p_1^4} e+1372 \kappa_{p_1^4} p_1\\ 
&12263 \kappa_{p_1^2} \kappa_{e p_1}^2-19446 \kappa_{e p_1}^3+168 \kappa_{e p_1}^2 e-168 \kappa_{e p_1}^2 p_1-4116 \kappa_{e p_1} e^2+16464 e^3-5488 \kappa_{p_1^4}
\end{align*}
the last of which shows that the generator $\kappa_{p_1^4}$ may be eliminated from the ring $\bQ[e, p_1, \kappa_{p_1^2}, \kappa_{p_1^4}, \kappa_{ep_1}]/\sqrt{J}$. One may also deduce from these relations that $\kappa_{ep_1}$ and $\kappa_{p_1^2}$ are integral over $\bQ[e, p_1]$, so that $R^*(\bC\bP^2, *)/\sqrt{0}$ is finite over $\bQ[e, p_1]$. 

\subsubsection{Fixing a disc}

As passing from $R^*(\bC\bP^2, *)$ to $R^*(\bC\bP^2, D^4)$ in particular kills $e$ and $p_1$, we deduce from the above that

\begin{corollary}
$R^*(\bC\bP^2, D^4)$ is a finite-dimensional $\bQ$-vector space.
\end{corollary}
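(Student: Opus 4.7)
The plan is to derive the claim as a direct formal consequence of the structural result about $R^*(\bC\bP^2, *)/\sqrt{0}$ established in Section \ref{sec:CP2FixPt}. The first point to record is that the natural surjection $R^*(\bC\bP^2, *) \to R^*(\bC\bP^2, D^4)$ annihilates the classes $e$ and $p_1$: the composition $B\Diff^+(\bC\bP^2, D^4) \to B\Diff^+(\bC\bP^2, *) \overset{s}\to BSO(4)$ is nullhomotopic, because a diffeomorphism fixing a marked disc has trivial derivative at its centre. Combined with Lemma \ref{lem:CP2Gen}, this means that $R^*(\bC\bP^2, D^4)$ is generated as a $\bQ$-algebra by the images of the three classes $\kappa_{p_1^2}, \kappa_{p_1^4}, \kappa_{ep_1}$.

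Next I would invoke the integrality statement from the end of Section \ref{sec:CP2FixPt}: each of $\kappa_{p_1^2}, \kappa_{p_1^4}, \kappa_{ep_1}$ is integral over $\bQ[e, p_1]$ in $R^*(\bC\bP^2, *)/\sqrt{0}$, and hence satisfies a monic polynomial with coefficients in $\bQ[e, p_1]$. Reducing these monic polynomials modulo the ideal $(e, p_1)$ produces monic polynomials over $\bQ$ which are satisfied by the images of the same generators in $R^*(\bC\bP^2, D^4)/\sqrt{0}$. Therefore $R^*(\bC\bP^2, D^4)/\sqrt{0}$ is a finitely generated commutative $\bQ$-algebra in which each of a finite generating set is algebraic over $\bQ$; such a ring is finite-dimensional as a $\bQ$-vector space.

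It remains to lift finite-dimensionality from the reduction modulo the nilradical to $R^*(\bC\bP^2, D^4)$ itself. The ring is a finitely generated $\bQ$-algebra (as a quotient of $R^*(\bC\bP^2, *)$, which is finitely generated by Theorem \ref{mainThm:A}), hence Noetherian. Combined with the finite-dimensionality of $R^*(\bC\bP^2, D^4)/\sqrt{0}$, this means the ring has Krull dimension zero, so is Artinian; equivalently, $\sqrt{0}$ is finitely generated and some power $(\sqrt{0})^N$ vanishes. The filtration by the ideals $(\sqrt{0})^i$ then has finitely many steps, and each subquotient $(\sqrt{0})^i / (\sqrt{0})^{i+1}$ is a finitely generated module over the finite-dimensional $\bQ$-algebra $R^*(\bC\bP^2, D^4)/\sqrt{0}$, and so is finite-dimensional over $\bQ$. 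Summing up the dimensions of the subquotients gives the required finite-dimensionality of $R^*(\bC\bP^2, D^4)$.

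I do not anticipate any genuine obstacle here: the only substantial input, namely the explicit relations producing finiteness of $R^*(\bC\bP^2, *)/\sqrt{0}$ as a $\bQ[e, p_1]$-module, has already been done in Section \ref{sec:CP2FixPt}. The only step requiring any care is the claim that $s^*e$ and $s^*p_1$ map to zero under $R^*(\bC\bP^2, *) \to R^*(\bC\bP^2, D^4)$, and this follows immediately from the description of these classes as pullbacks via $s$.
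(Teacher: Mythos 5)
Your argument is correct and follows essentially the same route as the paper: the surjection $R^*(\bC\bP^2,*)\to R^*(\bC\bP^2,D^4)$ kills $e$ and $p_1$, and the integrality of the generators over $\bQ[e,p_1]$ established in Section \ref{sec:CP2FixPt} then forces the quotient to be algebraic over $\bQ$. Your extra care in passing from $R^*(\bC\bP^2,D^4)/\sqrt{0}$ back to $R^*(\bC\bP^2,D^4)$ via the Artinian filtration is a legitimate way to fill in a step the paper leaves implicit (the paper's subsequent explicit computation of $K=J+(e,p_1)$, whose generators are monic in each variable, handles the non-reduced ring directly).
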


In fact, setting $K=J+(e, p_1)$ and simplifying, we find that $K$ is generated by
\begin{align*}
\kappa_{p_1^2}^4 &\quad\quad\quad \kappa_{p_1^2}^2(105\kappa_{e p_1} -11\kappa_{p_1^2})\\
 \kappa_{p_1^2}(245\kappa_{e p_1}^2 -52\kappa_{p_1^2}^2) &\quad\quad\quad 1029\kappa_{e p_1}^3 -52\kappa_{p_1^2}^3\\
 245\kappa_{p_1^4}-29\kappa_{p_1^2}^3
\end{align*}
\begin{comment}
\begin{align*}
\kappa_{p_1^4}^2 & \quad\quad\quad
\kappa_{p_1^2}\kappa_{p_1^4}\\
\kappa_{e p_1}\kappa_{p_1^4} & \quad\quad\quad
29\kappa_{p_1^2}^3-245\kappa_{p_1^4}\\
609\kappa_{e p_1}^3-260\kappa_{p_1^4} & \quad\quad\quad
29\kappa_{p_1^2}\kappa_{e p_1}^2-52\kappa_{p_1^4}\\
87\kappa_{p_1^2}^2\kappa_{e p_1}-77\kappa_{p_1^4}
\end{align*}
\end{comment}
and $R^*(\bC\bP^2, D^4)$ is a quotient of $\bQ[\kappa_{p_1^2}, \kappa_{p_1^4}, \kappa_{ep_1}]/K$ so

\begin{corollary}
$\dim_\bQ R^*(\bC\bP^2, D^4) \leq 7$.
\end{corollary}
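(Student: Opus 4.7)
The plan is to bound $\dim_\bQ \bQ[\kappa_{p_1^2}, \kappa_{p_1^4}, \kappa_{e p_1}]/K$ by $7$; since $R^*(\bC\bP^2, D^4)$ is by construction a quotient of this ring, the corollary follows. Write $a = \kappa_{p_1^2}$, $b = \kappa_{e p_1}$, $c = \kappa_{p_1^4}$. First I would use the generator $29 a^3 - 245 c \in K$ to eliminate $c$ via the substitution $c \equiv \tfrac{29}{245} a^3$, reducing the problem to computing $\dim_\bQ \bQ[a,b]/I'$, where $I'$ is the image of $K$ under this substitution.

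Substituting into the remaining six generators of $K$ and clearing scalar factors yields the relations $a^4 = 0$ and $a^3 b = 0$ (from $ac$ and $bc$ respectively), together with $245\, a b^2 = 52\, a^3$, $105\, a^2 b = 11\, a^3$, and $1029\, b^3 = 52\, a^3$; the generator $c^2$ becomes $a^6 = 0$, which is subsumed by $a^4 = 0$. Assigning $a$ and $b$ degree $1$, the ideal $I'$ is homogeneous, so I would compute dimensions piece by piece. Degrees $0$, $1$, $2$ are untouched and contribute $1 + 2 + 3 = 6$. In degree $3$, the three relations above express each of $a^2 b$, $a b^2$, $b^3$ as a scalar multiple of $a^3$, so the degree-$3$ part is $1$-dimensional. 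In degree $4$ every monomial vanishes: $a^4$ and $a^3 b$ do so directly, while $a^2 b^2 = a \cdot a b^2 = \tfrac{52}{245} a^4 = 0$, $a b^3 = b \cdot a b^2 = \tfrac{52}{245} a^3 b = 0$, and $b^4 = b \cdot b^3 = \tfrac{52}{1029} a^3 b = 0$; higher degrees vanish by the same mechanism. This totals to $7$.

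No step here poses a genuine obstacle, as the relations generating $K$ are already written down explicitly. The only point requiring care is the bookkeeping that, after substituting $c = \tfrac{29}{245} a^3$, each of the six remaining generators of $K$ reduces to a non-zero scalar multiple of the stated relation in $\bQ[a,b]$, so that no further relations beyond the five listed above appear.
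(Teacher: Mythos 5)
Your proposal is correct and follows the paper's approach: the paper likewise reduces the corollary to bounding $\dim_\bQ \bQ[\kappa_{p_1^2}, \kappa_{p_1^4}, \kappa_{ep_1}]/K$ by $7$, merely leaving the (computer-checked) dimension count implicit, whereas you carry it out by hand. Your elimination of $\kappa_{p_1^4}$ and the resulting degree-by-degree count $1+2+3+1=7$ are arithmetically correct.
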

\begin{comment}
\begin{proof}
The last relation can be used to eliminate the generator $\kappa_{p_1^4}$, so we find that $R^*(\bC\bP^2, D^4)$ is a quotient of $\bQ[\kappa_{p_1^2}, \kappa_{e p_1}]$ by the ideal generated by
\begin{align*}
\kappa_{p_1^2}^4 &\quad\quad\quad 105\kappa_{p_1^2}^2\kappa_{e p_1} -11\kappa_{p_1^2}^3\\
245\kappa_{p_1^2}\kappa_{e p_1}^2 -52\kappa_{p_1^2}^3&\quad\quad\quad 1029\kappa_{e p_1}^3 -52\kappa_{p_1^2}^3
\end{align*}
\end{proof}
\end{comment}

\subsubsection{Lower bounds via torus actions}\label{sec:CP2Torus}

Consider the standard toric action of the torus $T=(S^1)^2$ on $\bC\bP^2$, via
\begin{align*}
S^1 \times S^1 \times \bC\bP^2 &\lra \bC\bP^2\\
(\xi_1,\xi_2,[z_0 : z_1 : z_2]) &\longmapsto [z_0 : \xi_1  z_1 : \xi_2  z_2]
\end{align*}
This gives a ring homomorphism $\phi : R^*(\bC\bP^2) \to H^*_T = \bQ[x_1, x_2]$. It is an elementary exercise to compute, by equivariant localisation, the classes
\begin{align*}
\phi(\kappa_{p_1^2}) &=7x_1^2-7x_1x_2+7x_2^2\\
\phi(\kappa_{ep_1}) &= 4x_1^2-4x_1x_2+4x_2^2\\
\phi(\kappa_{p_1^4}) &= 23x_1^6-69x_1^5x_2+135x_1^4x_2^2-155x_1^3x_2^3+135x_1^2x_2^4-69x_1x_2^5+23x_2^6
\end{align*}
and by eliminating variables to find that the unique relation between these is $\phi(7\kappa_{ep_1} - 4\kappa_{p_1^2})=0$. Thus $\phi$ gives a surjection
$$R^*(\bC\bP^2)/\sqrt{0} \lra \bQ[\kappa_{p_1^2}, \kappa_{ep_1}, \kappa_{p_1^4}]/(7\kappa_{ep_1} - 4\kappa_{p_1^2})$$
and hence $\Kdim(R^*(\bC\bP^2)) \geq 2$. Combining this with the above gives

\begin{corollary}
$\Kdim(R^*(\bC\bP^2)) = 2$.
\end{corollary}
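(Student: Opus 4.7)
The proof is essentially a one-line synthesis of the two bounds established in the preceding subsections, so my plan is simply to record how these two bounds sandwich the Krull dimension.

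First I would invoke the upper bound $\Kdim(R^*(\bC\bP^2)) \leq 2$ from Corollary \ref{cor:CP2Variety}. Recall that this came from combining the trace identity relations \eqref{eq:CP2rele}--\eqref{eq:CP2rel3} (applied to monomials in $p_1$ and $e$ and pushed forward) with the vanishing of the Hirzebruch $\kappa$-classes (which holds because $\Aut(H^2(\bC\bP^2;\bZ),\lambda) = \bZ/2$ has trivial rational cohomology). Together these produced a surjection from a quotient of $\bQ[\kappa_{p_1^2}, \kappa_{ep_1}, \kappa_{p_1^4}]$ by a radical ideal whose variety is manifestly at most $2$-dimensional (being contained in the union of a plane and a line in $\bA^3$).

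Next I would invoke the lower bound $\Kdim(R^*(\bC\bP^2)) \geq 2$ just established in Section \ref{sec:CP2Torus} via the toric $(S^1)^2$-action: the explicit equivariant localisation computation shows that $\phi: R^*(\bC\bP^2)/\sqrt{0} \to \bQ[x_1,x_2]$ has image of Krull dimension $2$ (indeed it surjects onto $\bQ[\kappa_{p_1^2}, \kappa_{ep_1}, \kappa_{p_1^4}]/(7\kappa_{ep_1} - 4\kappa_{p_1^2})$, which is a polynomial ring in two variables up to a grading). Since a surjection of rings can only decrease Krull dimension, this forces $\Kdim(R^*(\bC\bP^2)) \geq 2$.

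Combining the two inequalities gives the equality. There is no real obstacle here, as both bounds have already been proved; the only conceptual point worth emphasising is that the upper and lower bounds come from genuinely independent sources (trace identities plus index theory on one side, equivariant localisation on the other) which nevertheless agree.
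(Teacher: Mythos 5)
Your proposal is correct and matches the paper's argument exactly: the corollary is obtained by combining the upper bound $\Kdim(R^*(\bC\bP^2)) \leq 2$ from Corollary \ref{cor:CP2Variety} with the lower bound $\Kdim(R^*(\bC\bP^2)) \geq 2$ coming from the surjection induced by the toric $(S^1)^2$-action in Section \ref{sec:CP2Torus}. Nothing further is needed.
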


The fixed point $[1:0:0]$ of the $T$-action gives an extension of $\phi$ to a ring homomorphism $\hat{\phi} :  R^*(\bC\bP^2, *)/\sqrt{0} \to H^*_T = \bQ[x_1, x_2]$. At this fixed point we have
\begin{align*}
\hat{\phi}(s^*e) &= x_1 x_2\\
\hat{\phi}(s^*p_1) &= x_1^2 + x_2^2
\end{align*}
which shows that the image of $\hat{\phi}$ is isomorphic to
$$\bQ[\kappa_{p_1^2}, \kappa_{ep_1}, \kappa_{p_1^4}, e, p_1]/(\kappa_{p_1^2}-7p_1+7e, \kappa_{ep_1}-4p_1+4e, 17e^3-66e^2p_1+69e p_1^2-23p_1^3+\kappa_{p_1^4}),$$
or in other words $\bQ[e, p_1]$.

\begin{remark}\label{rem:NoMoreCircles}
In \cite{Fintushel1, Fintushel2} there is given an analysis of $S^1$-actions on simply-connected 4-manifolds, from which it is possible to deduce---through a very laborious consideration of cases and analysis of fixed-point data---that for any circle action on $\bC\bP^2$ we have $4\kappa_{e^2} = \kappa_{ep_1}$ and so by the first Hirzebruch relation we have $4 \kappa_{p_1^2}-7 \kappa_{e p_1}=0$. Alternatively, this may be proved using Hsiang's splitting theorem for the $S^1$-equivariant cohomology of $\bC\bP^2$ \cite[Theorem VI.1]{Hsiang}. 
\end{remark}

\subsubsection{The tautological variety}

We find it quite revealing to consider the (reduced) tautological ring $R^*(\bC\bP^2)/\sqrt{0}$ by considering its associated variety $\gV_{\bC\bP^2}$. The choice of generators $\kappa_{p_1^2}, \kappa_{p_1^4},$ and $\kappa_{ep_1}$ of $R^*(\bC\bP^2)$ presents $\gV_{\bC\bP^2}$ as a subvariety of $\gA^3$, and it follows from Corollary \ref{cor:CP2Variety} that $\gV_{\bC\bP^2}$ is contained in the union of the plane
$$\gP :=\{4 \kappa_{p_1^2}-7 \kappa_{e p_1}=0\}$$
and the line
$$\gL :=\{\kappa_{p_1^2}-2 \kappa_{e p_1}=0, 316 \kappa_{e p_1}^3-343 \kappa_{p_1^4}=0\}.$$
Furthermore, it follows from the calculation of Section \ref{sec:CP2Torus} that $\gV_{\bC\bP^2}$ contains $\gP$, so the variety $\gV_{\bC\bP^2}$ is either $\gP$ or $\gP \cup \gL$. It would be extremely interesting if $\gL \subset \gV_{\bC\bP^2}$, but no method for showing this seems to be available. (Each circle action on $\bC\bP^2$ gives a homomorphism $R^*(\bC\bP^2)/\sqrt{0} \to \bQ[x_1]$ and hence a morphism $\gA^1 \to \gV_{\bC\bP^2}$, but by Remark \ref{rem:NoMoreCircles} all such morphisms have image in $\gP$.)

Similarly, by the calculation of Section \ref{sec:CP2FixPt} the four elements $e, p_1, \kappa_{p_1^2}$, and $\kappa_{ep_1}$ generate $R^*(\bC\bP^2,*)/\sqrt{0}$, which presents the associated variety $\gV_{(\bC\bP^2,*)}$ as a subvariety of $\gA^4$. Eliminating the variable $\kappa_{p_1^4}$ from the radical ideal described in Section \ref{sec:CP2FixPt} shows that $\gV_{(\bC\bP^2,*)}$ is contained in the union of the plane
$$\{4 \kappa_{p_1^2}-7 \kappa_{e p_1}=0, \kappa_{ep_1} -4p_1 + 4e=0\}$$
and the lines
\begin{align*}
\{\kappa_{p_1^2}-2 \kappa_{e p_1}=0, e=0, \kappa_{ep_1}-7p_1 &=0\}\\
\{\kappa_{p_1^2}-2 \kappa_{e p_1}=0, 2\kappa_{ep_1}-7e=0, 5\kappa_{ep_1}-7p_1 &=0\}.
\end{align*}
It follows from the calculation of Section \ref{sec:CP2Torus} that the plane is contained in $\gV_{(\bC\bP^2,*)}$.

\subsection{The manifold $S^2 \times S^2$}\label{sec:exS2S2}

The cohomology of $S^2 \times S^2$ is supported in even degrees. Thus by Theorem \ref{mainThm:A} the algebra $R^*(S^2 \times S^2)$ is finitely-generated and $R^*(S^2 \times S^2, *)$ is a finite $R^*(S^2 \times S^2)$-module. 

The trace identity technique of Section \ref{sec:TraceId} gives the relation
\begin{align*}
c^4&=\kappa_{ec}c^3-\frac{\kappa_{ec}^2-\kappa_{ec^2}}{2}c^2+\frac{\kappa_{ec}^3-3\kappa_{ec}\kappa_{ec^2}+2\kappa_{ec^3}}{6}c\\
&\quad-\frac{\kappa_{ec}^4}{24}+\frac{\kappa_{ec}^2\kappa_{ec^2}}{4}-\frac{\kappa_{ec^2}^2}{8}-\frac{\kappa_{ec}\kappa_{ec^3}}{3}+\frac{\kappa_{ec^4}}{4} \in R^*(S^2 \times S^2, *)
\end{align*}
for any $c \in H^*(BSO(4))=\bQ[p_1, e]$. Partially polarising via $c=e+t\cdot p_1$ as in Section \ref{sec:exCP2}, we obtain the relations
\begin{align}
&(1/8) \kappa_{ep_1^2}^2+(1/24) \kappa_{ep_1}^4-(1/6) p_1 \kappa_{ep_1}^3-(1/4) \kappa_{ep_1}^2 \kappa_{ep_1^2}\nonumber\\
&\quad+(1/2) p_1^2 \kappa_{ep_1}^2-(1/3) p_1 \kappa_{ep_1^3}+(1/3) \kappa_{ep_1} \kappa_{ep_1^3}-p_1^3 \kappa_{ep_1}\nonumber\\
&\quad-(1/2) p_1^2 \kappa_{ep_1^2}-(1/4) \kappa_{ep_1^4}+p_1^4+(1/2) p_1 \kappa_{ep_1} \kappa_{ep_1^2}=0\label{eq:S2S2:1}\\
&-p_1^2 \kappa_{e^2p_1}-(1/6) \kappa_{ep_1}^3 e+\kappa_{ep_1} \kappa_{e^2p_1^2}-p_1 \kappa_{e^2p_1^2}+(1/6) \kappa_{ep_1}^3 \kappa_{e^2}\nonumber\\
&\quad+(1/3) \kappa_{ep_1^3} \kappa_{e^2}-p_1^3 \kappa_{e^2}+(1/2) \kappa_{ep_1^2} \kappa_{e^2p_1}-(1/2) \kappa_{ep_1}^2 \kappa_{e^2p_1}\nonumber\\
&\quad+4 e p_1^3-(1/3) e \kappa_{ep_1^3}-(1/2) \kappa_{ep_1} \kappa_{ep_1^2} \kappa_{e^2}+(1/2) p_1 \kappa_{ep_1^2} \kappa_{e^2}\nonumber\\
&\quad-(1/2) p_1 \kappa_{ep_1}^2 \kappa_{e^2}+p_1^2 \kappa_{ep_1} \kappa_{e^2}-e p_1 \kappa_{ep_1^2}+(1/2) e \kappa_{ep_1} \kappa_{ep_1^2}\nonumber\\
&\quad+e p_1 \kappa_{ep_1}^2-3 e p_1^2 \kappa_{ep_1}+p_1 \kappa_{ep_1} \kappa_{e^2p_1}-\kappa_{e^2p_1^3}=0\label{eq:S2S2:2}\\
&6 e^2 p_1^2+(1/4) \kappa_{ep_1}^2 \kappa_{e^2}^2+(1/2) p_1^2 \kappa_{e^2}^2-(1/2) e^2 \kappa_{ep_1^2}\nonumber\\
&\quad+(1/2) e^2 \kappa_{ep_1}^2-p_1 \kappa_{e^3p_1}+\kappa_{ep_1} \kappa_{e^3p_1}+(1/4) \kappa_{ep_1^2} \kappa_{e^3}-(1/4) \kappa_{ep_1}^2 \kappa_{e^3}\nonumber\\
&\quad-(1/2) p_1^2 \kappa_{e^3}-e \kappa_{e^2p_1^2}+\kappa_{e^2} \kappa_{e^2p_1^2}-(1/4) \kappa_{ep_1^2} \kappa_{e^2}^2\nonumber\\
&\quad-(1/2) e \kappa_{ep_1}^2 \kappa_{e^2}-3 e p_1^2 \kappa_{e^2}-3 e^2 p_1 \kappa_{ep_1}\nonumber\\
&\quad+(1/2) p_1 \kappa_{ep_1} \kappa_{e^3}+(1/2) \kappa_{e^2p_1}^2+(1/2) e \kappa_{ep_1^2} \kappa_{e^2}\nonumber\\
&\quad-(1/2) p_1 \kappa_{ep_1} \kappa_{e^2}^2+e \kappa_{ep_1} \kappa_{e^2p_1}+p_1 \kappa_{e^2} \kappa_{e^2p_1}\nonumber\\
&\quad-2 e p_1 \kappa_{e^2p_1}-\kappa_{ep_1} \kappa_{e^2} \kappa_{e^2p_1}+2 e p_1 \kappa_{ep_1} \kappa_{e^2}-(3/2) \kappa_{e^3p_1^2}=0\label{eq:S2S2:3}\\
&(1/2) \kappa_{e^2 p_1} \kappa_{e^3}-(1/2) \kappa_{e^2}^2 \kappa_{e^2 p_1}-e^2 \kappa_{e^2 p_1}-(1/6) \kappa_{e^2}^3 p_1\nonumber\\
&\quad+(1/6) \kappa_{e^2}^3 \kappa_{e p_1}+\kappa_{e^2} \kappa_{e^3 p_1}-e^3 \kappa_{e p_1}+(1/3) \kappa_{e p_1} \kappa_{e^4}-(1/3) p_1 \kappa_{e^4}\nonumber\\
&\quad-e \kappa_{e^3 p_1}+(1/2) p_1 \kappa_{e^2} \kappa_{e^3}-(1/2) \kappa_{e p_1} \kappa_{e^2} \kappa_{e^3}+(1/2) e \kappa_{e p_1} \kappa_{e^3}\nonumber\\
&\quad-e p_1 \kappa_{e^3}+e \kappa_{e^2} \kappa_{e^2 p_1}+e p_1 \kappa_{e^2}^2-(1/2) e \kappa_{e p_1} \kappa_{e^2}^2-3 e^2 p_1 \kappa_{e^2}\nonumber\\
&\quad+e^2 \kappa_{e p_1} \kappa_{e^2}-\kappa_{e^4 p_1}+4 e^3 p_1=0\label{eq:S2S2:4}\\
&(1/8) \kappa_{e^3}^2+(1/24) \kappa_{e^2}^4-(1/6) e \kappa_{e^2}^3+(1/2) e^2 \kappa_{e^2}^2-(1/4) \kappa_{e^2}^2 \kappa_{e^3}\nonumber\\
&\quad+(1/2) e \kappa_{e^2} \kappa_{e^3}-(1/4) \kappa_{e^5}-e^3 \kappa_{e^2}-(1/2) e^2 \kappa_{e^3}-(1/3) e \kappa_{e^4}\nonumber\\
&\quad+(1/3) \kappa_{e^2} \kappa_{e^4}+e^4=0\label{eq:S2S2:5}
\end{align}
Modulo decomposables in $R^*(S^2 \times S^2)$, when multiplied by monomials in $\bQ[e, p_1]$ and fibre integrated these give the relations
\begin{align*}
\kappa_{x p_1^4} &\text{ is decomposable for any monomial $x \neq e$}\\
\kappa_{x ep_1^3} &\text{ is decomposable for any monomial $x \neq e$}\\
\kappa_{x e^2 p_1^2} & \text{ is decomposable for any monomial $x \neq e$}\\
\kappa_{x e^3 p_1} & \text{ is decomposable for any monomial $x \neq 1, e$}\\
\kappa_{x e^4} & \text{ is decomposable for any monomial $x \neq e$}.
\end{align*}
%Furthermore, multiplying the second relation above by $p_1$ and fibre integrating shows that $\kappa_{ep_1^4}$ is decomposable. 
This shows that all generators apart from
$$\kappa_{p_1^2}, \kappa_{p_1^3}, \kappa_{ep_1}, \kappa_{ep_1^2}, \kappa_{e^2}, \kappa_{e^2p_1}, \kappa_{e^3}, \kappa_{e^3p_1}, \kappa_{e^5}$$
are decomposable. The Hirzebruch relations of the previous section hold here as well, as the bilinear form associated to $S^2 \times S^2$ is $(\bZ^2, \left(\begin{smallmatrix} 0&1\\ 1&0 \end{smallmatrix}\right))$ which also has finite automorphism group. The first two Hirzebruch relations $\kappa_{e^2} = \frac{1}{7} \kappa_{p_1^2}$ and $\kappa_{e^2p_1} = \frac{2}{13}\kappa_{p_1^3}$ allow us to remove two of these generators, and so we find that

\begin{lemma}
The classes $\kappa_{p_1^2}, \kappa_{p_1^3}, \kappa_{e p_1}, \kappa_{ep_1^2}, \kappa_{e^3}, \kappa_{e^3 p_1}, \kappa_{e^5}$ generate $R^*(S^2 \times S^2)$. 
\end{lemma}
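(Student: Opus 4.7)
The plan is to reduce the generating set $\{\kappa_{e^a p_1^b}: a+b \geq 2\}$ of $R^*(S^2\times S^2)$ to the seven named classes by combining the five trace-identity congruences already derived from \eqref{eq:S2S2:1}--\eqref{eq:S2S2:5} with the first two Hirzebruch relations, and closing with an induction on total cohomological degree. Throughout, "decomposable" is read as "lies in the subalgebra generated by $\kappa$-classes of strictly smaller degree"; with this reading the five congruences $\kappa_{x p_1^4} \equiv 0$ (for monomials $x \neq e$), $\kappa_{x e p_1^3} \equiv 0$ and $\kappa_{x e^2 p_1^2} \equiv 0$ (any $x$), $\kappa_{x e^3 p_1} \equiv 0$ (for monomials $x \neq 1$), and $\kappa_{x e^4} \equiv 0$ (for monomials $x \neq e$) are all that is needed.

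Next I would do a case analysis on the exponent $a$ in $\kappa_{e^a p_1^b}$. For $a = 0$, applying the first congruence with $x = p_1^{b-4} \neq e$ handles all $b \geq 4$, leaving $\kappa_{p_1^2}$ and $\kappa_{p_1^3}$. For $a = 1$, the second congruence with $x = p_1^{b-3}$ handles all $b \geq 3$ (in particular $\kappa_{e p_1^4}$ is caught with $x = p_1$), leaving $\kappa_{e p_1}$ and $\kappa_{e p_1^2}$. For $a = 2$, the third congruence leaves $\kappa_{e^2}$ and $\kappa_{e^2 p_1}$. For $a = 3$, the fourth congruence with $x = p_1^{b-1} \neq 1$ handles $b \geq 2$, leaving $\kappa_{e^3}$ and $\kappa_{e^3 p_1}$. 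For $a = 4$ and any $b$, the fifth congruence with $x = p_1^b \neq e$ applies; and for $a \geq 5$ with $(a,b) \neq (5,0)$, the fifth congruence with $x = e^{a-4} p_1^b$ still satisfies $x \neq e$, so applies. Only $\kappa_{e^5}$ survives. These nine classes are therefore the only non-decomposable candidates, with the exclusions "$x \neq 1$" and "$x \neq e$" lining up exactly with the exceptional survivors $\kappa_{e^3 p_1}$ and $\kappa_{e^5}$.

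Finally I would invoke the first two Hirzebruch relations, which, as the intersection form of $S^2 \times S^2$ has finite automorphism group, apply here verbatim and give $7 \kappa_{e^2} = \kappa_{p_1^2}$ and $13 \kappa_{e^2 p_1} = 2 \kappa_{p_1^3}$. These eliminate $\kappa_{e^2}$ and $\kappa_{e^2 p_1}$ from the list of generators, yielding the seven classes of the statement. An induction on total degree then closes the argument: any $\kappa_{e^a p_1^b}$ outside the seven is, by the case analysis above, either handled by a Hirzebruch relation or written by one of the five congruences as a polynomial in strictly lower-degree $\kappa$-classes, which by induction already lie in the subalgebra generated by the seven. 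The argument is almost entirely bookkeeping, and the one delicate point to verify carefully is precisely the alignment between the side conditions in the congruences and the surviving generators; the case analysis above is organised to make this verification essentially mechanical.
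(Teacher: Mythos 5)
Your proposal is correct and follows essentially the same route as the paper: the same five congruences from \eqref{eq:S2S2:1}--\eqref{eq:S2S2:5} reduce the generators to the nine classes $\kappa_{p_1^2}, \kappa_{p_1^3}, \kappa_{ep_1}, \kappa_{ep_1^2}, \kappa_{e^2}, \kappa_{e^2p_1}, \kappa_{e^3}, \kappa_{e^3p_1}, \kappa_{e^5}$ (including the observation that $\kappa_{ep_1^4}$, excluded from the first congruence, is caught by the second with $x=p_1$), and the first two Hirzebruch relations then eliminate $\kappa_{e^2}$ and $\kappa_{e^2p_1}$. Your explicit case analysis on the exponent of $e$ and the closing induction on degree are just a more detailed write-up of what the paper leaves implicit.
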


Consider the ideal $I$ of $\bQ[\kappa_{p_1^2}, \kappa_{p_1^3}, \kappa_{e p_1}, \kappa_{ep_1^2}, \kappa_{e^3}, \kappa_{e^3 p_1}, \kappa_{e^5}]$ of relations implied by \eqref{eq:S2S2:1}--\eqref{eq:S2S2:5} for $\kappa_{e^a p_1^b}$ for $a+b \leq 9$, and the Hirzebruch relations of the previous section. It is quite complicated, but it is easy to compute (in {\tt Macaulay2}) that it has codimension 3.

\begin{corollary}
$\Kdim(R^*(S^2 \times S^2)) \leq 4$.
\end{corollary}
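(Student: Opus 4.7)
The plan is to combine the generation result established in the preceding lemma with an explicit codimension computation for the ideal of known relations. The preceding lemma shows that
$$R^*(S^2 \times S^2) \twoheadleftarrow \bQ[\kappa_{p_1^2}, \kappa_{p_1^3}, \kappa_{e p_1}, \kappa_{ep_1^2}, \kappa_{e^3}, \kappa_{e^3 p_1}, \kappa_{e^5}]/I$$
is surjective, where the source is a quotient of a polynomial ring in $7$ variables of Krull dimension $7$. Since Krull dimension can only decrease under a surjection of commutative rings, it suffices to prove $\Kdim(\bQ[\ldots]/I) \leq 4$, i.e.\ that the ideal $I$ has height at least $3$.

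To produce the relations that generate $I$, I would start from the five polarised trace identities \eqref{eq:S2S2:1}--\eqref{eq:S2S2:5}, which live in $R^*(S^2 \times S^2, *)$. Multiplying each by a monomial $e^a p_1^b$ with $a+b \leq 9$ and fibre integrating yields relations among $\kappa$-classes of bounded degree in $R^*(S^2 \times S^2)$. Each such $\kappa$-class which is not one of the seven generators listed above is decomposable by the argument preceding the lemma, so after substituting the expressions for the decomposable $\kappa$-classes back in, every relation becomes a polynomial relation in the seven chosen generators. Adjoining to this list the Hirzebruch relations $\kappa_{e^2} = \tfrac{1}{7}\kappa_{p_1^2}$, $\kappa_{e^2 p_1} = \tfrac{2}{13}\kappa_{p_1^3}$ (and the higher Hirzebruch relations listed in Section \ref{sec:exCP2}, applied here because the intersection form of $S^2\times S^2$ also has finite automorphism group) gives a finite explicit generating set for $I$.

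The final step is to compute the height (equivalently the codimension) of $I$ in $\bQ[\kappa_{p_1^2}, \kappa_{p_1^3}, \kappa_{e p_1}, \kappa_{ep_1^2}, \kappa_{e^3}, \kappa_{e^3 p_1}, \kappa_{e^5}]$. This is a finite Gr\"obner-basis computation, which I would carry out in \texttt{Macaulay2}, and which yields $\mathrm{codim}(I) = 3$. Combining with the surjection above gives $\Kdim(R^*(S^2\times S^2)) \leq 7-3 = 4$.

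The main obstacle is logistical rather than conceptual: the relations \eqref{eq:S2S2:1}--\eqref{eq:S2S2:5} must be multiplied by all monomials $e^a p_1^b$ of the required degrees and then fibre integrated, and every non-generator $\kappa$-class that appears must be consistently rewritten in terms of the seven chosen generators via the modulo-decomposables relations and the Hirzebruch relations; any bookkeeping error could inflate or deflate the computed codimension. One should also check that the upper bound $3$ on the height is tight enough to conclude $\leq 4$, which amounts to verifying that \texttt{Macaulay2} returns exactly the codimension of $I$ and not merely a bound. No further geometric input is needed for the upper bound; the corresponding lower bound $\Kdim \geq 3$ is provided separately by the torus-action method of Corollary \ref{mainCor:B}.
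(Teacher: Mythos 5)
Your proposal is correct and follows essentially the same route as the paper: surject $\bQ[\kappa_{p_1^2}, \kappa_{p_1^3}, \kappa_{e p_1}, \kappa_{ep_1^2}, \kappa_{e^3}, \kappa_{e^3 p_1}, \kappa_{e^5}]/I$ onto $R^*(S^2 \times S^2)$ using the generation lemma, where $I$ is generated by the fibre-integrated consequences of \eqref{eq:S2S2:1}--\eqref{eq:S2S2:5} together with the Hirzebruch relations, and verify in \texttt{Macaulay2} that $I$ has codimension $3$, giving Krull dimension at most $7-3=4$. The paper's argument is identical, including the degree cutoff $a+b \leq 9$ and the machine computation of the codimension.
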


\subsubsection{Fixing a point}

It follows from Lemma \ref{lem:CP2Gen} that $R^*(S^2 \times S^2, *)$ is generated by $e, p_1, \kappa_{p_1^2}, \kappa_{p_1^3}, \kappa_{e p_1}, \kappa_{ep_1^2}, \kappa_{e^3}, \kappa_{e^3 p_1}, \kappa_{e^5}$. Adding to the ideal $I$ above the relations \eqref{eq:S2S2:1}--\eqref{eq:S2S2:5} gives an ideal $J$ of $\bQ[e, p_1, \kappa_{p_1^2}, \kappa_{p_1^3}, \kappa_{e p_1}, \kappa_{ep_1^2}, \kappa_{e^3}, \kappa_{e^3 p_1}, \kappa_{e^5}]$ which has codimension 5.

\subsubsection{Fixing a disc}

Passing from $R^*(S^2 \times S^2, *)$ to $R^*(S^2 \times S^2, D^4)$ in particular kills $e$ and $p_1$, and we may compute the radical of the ideal $K:=J+(e, p_1)$, giving the following.

\begin{corollary}
$R^*(S^2 \times S^2, D^4)/\sqrt{0}$ is a quotient of
$$\frac{\bQ[\kappa_{p_1^2}, \kappa_{p_1^3},\kappa_{e p_1}, \kappa_{ep_1^2}, \kappa_{e^3}, \kappa_{e^3 p_1}, \kappa_{e^5}]}{(\kappa_{p_1^3},\kappa_{p_1^2},\kappa_{e^3}^2-2\kappa_{e^5},\kappa_{e p_1}\kappa_{e^3}-2\kappa_{e^3 p_1},\kappa_{e p_1}^2-2\kappa_{ep_1^2})} \cong \bQ[\kappa_{e p_1}, \kappa_{e^3}]$$
so has Krull dimension at most 2.
\end{corollary}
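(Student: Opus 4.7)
The plan is to imitate the strategy used for $\bC\bP^2$ in the previous subsection. First I would observe that the natural surjection $R^*(S^2 \times S^2, *) \to R^*(S^2 \times S^2, D^4)$ annihilates $e$ and $p_1$, so that, writing $K = J + (e, p_1)$ for the reduction modulo the disc classes of the ideal $J$ from the previous subsection, the ring $R^*(S^2 \times S^2, D^4)$ is a quotient of $\bQ[\kappa_{p_1^2}, \kappa_{p_1^3}, \kappa_{ep_1}, \kappa_{ep_1^2}, \kappa_{e^3}, \kappa_{e^3 p_1}, \kappa_{e^5}]/K$. Concretely, $K$ is generated by the images modulo $(e,p_1)$ of the relations \eqref{eq:S2S2:1}--\eqref{eq:S2S2:5}, multiplied by monomials in $\bQ[e,p_1]$ to cover $\kappa_{e^a p_1^b}$ with $a+b \le 9$, together with the Hirzebruch relations.

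The main step is a computer algebra calculation (in {\tt Macaulay2}, following exactly the template used for the preceding $\bC\bP^2$ results) which computes $\sqrt{K}$ and identifies the five distinguished elements $\kappa_{p_1^3}$, $\kappa_{p_1^2}$, $\kappa_{e^3}^2-2\kappa_{e^5}$, $\kappa_{e p_1}\kappa_{e^3}-2\kappa_{e^3 p_1}$, $\kappa_{e p_1}^2-2\kappa_{ep_1^2}$ as lying in $\sqrt{K}$. Once that verification is in place, $R^*(S^2 \times S^2, D^4)/\sqrt{0}$, which is a quotient of $\bQ[\kappa_{p_1^2}, \kappa_{p_1^3}, \kappa_{ep_1}, \kappa_{ep_1^2}, \kappa_{e^3}, \kappa_{e^3 p_1}, \kappa_{e^5}]/\sqrt{K}$, is a fortiori a quotient of the ring displayed in the statement.

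For the Krull dimension bound, the five relations above are engineered to allow direct elimination: $\kappa_{p_1^2}$ and $\kappa_{p_1^3}$ are set to zero outright, while the remaining three equations $\kappa_{e^5} = \tfrac{1}{2}\kappa_{e^3}^2$, $\kappa_{e^3 p_1} = \tfrac{1}{2}\kappa_{e p_1}\kappa_{e^3}$, $\kappa_{ep_1^2} = \tfrac{1}{2}\kappa_{e p_1}^2$ solve for the three remaining redundant generators. After these five eliminations one is left with the polynomial ring $\bQ[\kappa_{ep_1}, \kappa_{e^3}]$, which has Krull dimension $2$, so the same is true of its quotient $R^*(S^2 \times S^2, D^4)/\sqrt{0}$.

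The main obstacle is purely computational: producing, via {\tt Macaulay2}, a certificate that these five elements lie in $\sqrt{K}$, given that $K$ itself is a large and unenlightening ideal. The underlying radical is expected to contain many further relations, but the judgment exercised in the proof is the choice of a particularly clean subset of $\sqrt{K}$ whose quotient transparently reduces to a two-variable polynomial ring; the rest of the argument is mechanical.
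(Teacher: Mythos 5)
Your proposal matches the paper's argument: the paper likewise forms $K = J + (e,p_1)$, computes $\sqrt{K}$ by computer algebra, and reads off the displayed presentation, with the dimension bound following by eliminating the redundant generators to leave $\bQ[\kappa_{ep_1},\kappa_{e^3}]$. The only (harmless) difference is that you ask merely for a certificate that the five listed elements lie in $\sqrt{K}$, whereas the paper computes the radical outright; both suffice for the stated conclusion.
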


\subsubsection{Lower bounds via torus actions}

We will use a family of almost-complex torus actions $\phi_k : T^2 \to \Diff(S^2 \times S^2)$ defined for $k \in \bN$. % there is an which has four fixed points, with weights
%$$\{-x_2, x_1-2kx_2\}, \{-x_2, 2kx_2-x_1\}, \{x_2, x_1\}, \{x_2, -x_1\}.$$
These actions are well-known among symplectic geometers: we learnt their construction from work of Karshon \cite{Karshon}, suggested to us by Ivan Smith. In that paper these actions are constructed as the toric varieties associated to the Delzant polytopes
\begin{figure}[h]
\includegraphics[height=2.5cm]{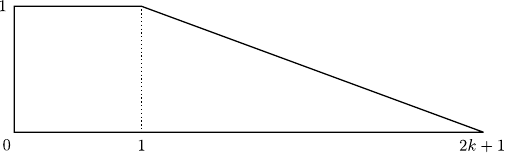}
\end{figure}

\noindent and it follows from \cite[Lemma 3]{Karshon}, and the fact that $k=0$ yields $S^2 \times S^2$, that all the manifolds so obtained are diffeomorphic to $S^2 \times S^2$. In toric geometry the above polytope should be considered as lying in the dual $\mathfrak{t}^*$ of the Lie algebra of $T$, having integral basis $\{x_1, x_2\}$ which we identify with the cartesian coordinates in the figure above. The $T^2$-fixed points correspond to the vertices of the polytope, and the weights at each fixed point are given by the pair of elements of $\mathfrak{t}^*$ given by the two primitive integral vectors associated to the edges incident at that vertex (cf.\ \cite[Example 7.3.19]{ToricTop}). For the polytope above the weights are therefore
$$\{x_1, x_2\}, \{x_1, -x_2\}, \{-x_1, 2kx_1-x_2\}, \{-x_1, x_2-2kx_1\}.$$

It follows that at the four fixed points of the action $\phi_k$ the Euler class is
$$x_1x_2, -x_1x_2, x_1(x_2-2kx_1), x_1(2kx_1-x_2)$$
%$$x_1(2kx_1-x_2), x_1(x_2-2kx_1), x_1x_2, -x_1x_2$$
and the Pontrjagin class $p_1 = c_1^2-2c_2$ is
$$x_1^2+x_2^2, x_1^2+x_2^2, (4k^2+1)x_2^2-4kx_1x_2+x_1^2, (4k^2+1)x_2^2-4kx_1x_2+x_1^2$$
%$$(4k^2+1)x_1^2-4kx_2x_1+x_2^2, (4k^2+1)x_1^2-4kx_2x_1+x_2^2, x_1^2+x_2^2, x_1^2+x_2^2.$$
We may thus compute the map
$$\psi_k : \bQ[\kappa_{p_1^2}, \kappa_{p_1^3}, \kappa_{e p_1}, \kappa_{ep_1^2}, \kappa_{e^3}, \kappa_{e^3 p_1}, \kappa_{e^5}] \lra R^*(S^2 \times S^2) \overset{\phi_k}\lra H^*_T = \bQ[x_1, x_2]$$
by equivariant localisation, giving
\begin{align*}
\kappa_{p_1^2} &= 0\\
\kappa_{p_1^3} &= 0\\
\kappa_{e p_1} &= 8 k^2 x_2^2-8 k x_2 x_1+4 x_2^2+4 x_1^2\\
\kappa_{ep_1^2} &= 32 k^4 x_2^4-64 k^3 x_2^3 x_1+16 k^2 x_2^4+48 k^2 x_2^2 x_1^2-16 k x_2^3 x_1-16 k x_2 x_1^3+4 x_2^4\\
&\,\,\,\,\,\,\,\,+8 x_2^2 x_1^2+4 x_1^4\\
\kappa_{e^3} &= 8 k^2 x_2^4-8 k x_2^3 x_1+4 x_2^2 x_1^2\\
\kappa_{e^3 p_1} &= 32 k^4 x_2^6-64 k^3 x_2^5 x_1+8 k^2 x_2^6+48 k^2 x_2^4 x_1^2-8 k x_2^5 x_1-16 k x_2^3 x_1^3+4 x_2^4 x_1^2\\
&\,\,\,\,\,\,\,\,+4 x_2^2 x_1^4\\
\kappa_{e^5} &= 32 k^4 x_2^8-64 k^3 x_2^7 x_1+48 k^2 x_2^6 x_1^2-16 k x_2^5 x_1^3+4 x_2^4 x_1^4
\end{align*}
By eliminating $x_1$, $x_2$, and $k$ from the above, one finds generators for the ideal $U:=\cap_{k \in \bN} \Ker(\psi_k)$ of $\bQ[\kappa_{p_1^2}, \kappa_{p_1^3}, \kappa_{e p_1}, \kappa_{ep_1^2}, \kappa_{e^3}, \kappa_{e^3 p_1}, \kappa_{e^5}]$ to be
\begin{align*}
& \kappa_{p_1^2}\\
& \kappa_{p_1^3}\\
& \kappa_{ep_1^2}\kappa_{e^3} - \kappa_{e^3}^2 - \kappa_{ep_1}\kappa_{e^3 p_1} + 4\kappa_{e^5}\\ & \kappa_{e^3}^3 - \kappa_{ep_1}\kappa_{e^3}\kappa_{e^3p_1} + \kappa_{ep_1}^2 \kappa_{e^5} + 4\kappa_{e^3 p_1}^2 - 4\kappa_{ep_1^2}\kappa_{e^5} - 4\kappa_{e^3}\kappa_{e^5},
\end{align*}
so this ideal has codimension 4. There is a surjection
$$R^*(S^2 \times S^2) \lra \bQ[\kappa_{p_1^2}, \kappa_{p_1^3}, \kappa_{e p_1}, \kappa_{ep_1^2}, \kappa_{e^3}, \kappa_{e^3 p_1}, \kappa_{e^5}]/U,$$
and hence the Krull dimension of $R^*(S^2 \times S^2)$ is bounded below by $7-\mathrm{codim}(U)=3 $.

\begin{corollary}
$\Kdim(R^*(S^2 \times S^2)) \geq 3$.
\end{corollary}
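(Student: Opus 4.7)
The strategy is to combine the one-parameter family of torus actions $\phi_k$ just constructed into a single surjection whose target has Krull dimension at least $3$. By the preceding lemma, the ring $R^*(S^2 \times S^2)$ is a quotient of the polynomial ring $P := \bQ[\kappa_{p_1^2}, \kappa_{p_1^3}, \kappa_{ep_1}, \kappa_{ep_1^2}, \kappa_{e^3}, \kappa_{e^3 p_1}, \kappa_{e^5}]$, and for each $k$ the action $\phi_k$ gives a ring homomorphism $\psi_k : P \to \bQ[x_1, x_2]$ which factors through $R^*(S^2 \times S^2)$, with values on the seven generators computed just above by equivariant localisation.

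The first step is to form the ideal $U := \bigcap_{k \in \bN} \ker(\psi_k) \lhd P$. Since each $\psi_k$ factors through $R^*(S^2 \times S^2)$, the ideal $U$ contains the kernel of the canonical surjection $P \twoheadrightarrow R^*(S^2 \times S^2)$, so there is an induced surjection $R^*(S^2 \times S^2) \twoheadrightarrow P/U$, and therefore $\Kdim(R^*(S^2 \times S^2)) \geq \Kdim(P/U)$.

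The second step is to recognise that $U$ is the kernel of a single map, thereby reducing an intersection over infinitely many homomorphisms to a standard elimination problem. The formulas for $\psi_k$ on the seven generators depend polynomially on $k$, and a polynomial in $\bQ[k, x_1, x_2]$ which vanishes for every positive integer value of $k$ vanishes identically. Consequently $U = \ker(\Psi)$, where $\Psi : P \to \bQ[k, x_1, x_2]$ sends each generator to its formula regarded as an element of $\bQ[k, x_1, x_2]$. The quotient $P/U$ thus embeds in $\bQ[k, x_1, x_2]$ as the coordinate ring of the image of the morphism $\bA^3 \to \bA^7$ defined by those seven polynomials.

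The final step is to compute the dimension of this image. Conceptually, the cleanest route is to check that the Jacobian of $\Psi$ attains rank $3$ at some point of $\bA^3$, which immediately forces the image variety to be $3$-dimensional; alternatively, one may extract explicit generators for $U$ by Groebner basis elimination of $k, x_1, x_2$ in a system such as {\tt Macaulay2} and read off $\Kdim(P/U) = 3$ directly. This last calculation is the main (and essentially the only) obstacle, since the explicit formulas above involve degree-$8$ polynomials in three variables; once carried out, it yields the desired bound $\Kdim(R^*(S^2 \times S^2)) \geq 3$.
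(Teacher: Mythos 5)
Your proof follows the paper's argument exactly: form $U = \bigcap_{k}\ker(\psi_k)$ in the polynomial ring $P$ on the seven generators, observe that since each $\psi_k$ factors through $R^*(S^2\times S^2)$ the canonical surjection $P \twoheadrightarrow R^*(S^2\times S^2)$ induces a surjection $R^*(S^2\times S^2)\twoheadrightarrow P/U$, and then compute that $P/U$ has Krull dimension $3$ by eliminating $k, x_1, x_2$. Your intermediate observation that $U$ coincides with the kernel of the single map $\Psi : P \to \bQ[k,x_1,x_2]$ (because a polynomial over the domain $\bQ[x_1,x_2]$ vanishing at infinitely many values of $k$ vanishes identically) usefully makes explicit a step the paper leaves implicit, and, like the paper, you defer the final elimination (or Jacobian rank) computation to a computer algebra system.
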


Note that each ideal $\Ker(\psi_k)$ of $\bQ[\kappa_{p_1^2}, \kappa_{p_1^3}, \kappa_{e p_1}, \kappa_{ep_1^2}, \kappa_{e^3}, \kappa_{e^3 p_1}, \kappa_{e^5}]$ has codimension 5, so each particular torus action only gives 2 as a lower bound for $\Kdim(R^*(S^2 \times S^2))$: it is only by considering the countably-many such actions that we are able to improve this lower bound to 3.

\bibliographystyle{amsalpha}
\bibliography{biblio}

\end{document}